\newtheorem{theorem}{Theorem}
\newtheorem{proposition}{Proposition}
\newtheorem{lemma}{Lemma}
\def\EE{\mathbb{E}}
\def\PP{\mathbb{P}}
\def\RR{\mathbb{R}}
\def\Rcal{\mathcal R}
\def\II{{\mathbb I}}
\def\Ecal{\mathcal E}
\def\Ccal{\mathcal C}
\def\argmin{\mathop{\rm argmin}}
\begin{document}
\title{Structured Correlation Detection with Application to Colocalization Analysis in Dual-Channel Fluorescence Microscopic Imaging}

\date{(\today)}
\vskip -10pt

\author{Shulei Wang$^{\ast}$, Jianqing Fan$^\dagger$, Ginger Pocock$^{\mathsection}$ and Ming Yuan$^{\ast,\ddagger}$\\
\\
Morgridge Institute for Research$^{\ast}$ and University of Wisconsin-Madison$^{\ast,\mathsection}$\\
and\\
 Princeton University$^\dagger$}

\footnotetext[1]{
Supported in part by NSF FRG Grant DMS-1265202 and NIH Grant 1U54AI117924-01.}
\footnotetext[2]{
Supported in part by NSF Grants DMS-1206464 and DMS-1406266 and NIH grants R01-GM072611-11.}
\footnotetext[3]{
Ming Yuan wishes to thank Paul Ahlquist, Kevin Eliceiri and Nathan Sherer for introducing him to colocalization analysis in microscopic imaging, and Richard Samworth for helpful discussions and careful reading of an earlier draft that has led to much improved presentation. Address for correspondence: Department of Statistics, University of Wisconsin-Madison, 1300 University Avenue, Madison, WI 53706.}


\maketitle

\begin{abstract}
Motivated by the problem of colocalization analysis in fluorescence microscopic imaging, we study in this paper structured detection of correlated regions between two random processes observed on a common domain. We argue that although intuitive, direct use of the maximum log-likelihood statistic suffers from potential bias and substantially reduced power, and introduce a simple size-based normalization to overcome this problem. We show that scanning with the proposed size-corrected likelihood ratio statistics leads to optimal correlation detection over a large collection of structured correlation detection problems.
\end{abstract}



\newpage
\section{Introduction}
\label{sec:intro}

Most if not all biological processes are characterized by complex interactions among bio-molecules such as proteins. A common way to decipher such interactions is through multichannel fluorescence microscopic imaging where each molecule is labeled with fluorescence of a unique emission wavelength, and their biological interactions can be identified with correlation between the expression of fluorescent proteins in certain compartments. Although an ad hoc approach, visual inspection of the overlayed image from both channels is arguably the most common way to determine colocalization in multichannel fluorescence microscopy. Potential pitfalls of this na\"ive strategy, however, are also well-documented as merged images are heavily influenced by factors such as bleed-through, cross-talk, and relative intensities between different channels. See, e.g., \cite{bolte2006guided} and \cite{comeau2006guide}.

Since the pioneering work of Manders and his collaborators in early 1990s, quantitative methods have also been introduced to colocalization analysis. See, e.g., \cite{manders1992dynamics} and \cite{manders1993measurement}. These approaches typically proceed by first manually selecting a region where the two molecules are likely to colocalize. The degree of colocalization is then measured through various notions of correlation coefficient, most notably Pearson's correlation coefficient or Menders' correlation coefficient, computed specific to the chosen area. See \cite{manders1993measurement}, \cite{costes2004automatic}, Parmryd, \cite{adler2008replicate}, \cite{herce2013new} among others. Obviously, the performance of these approaches depends critically on the manually-selected region of interest, which not only makes the analysis subjective but also creates a bottleneck for high-throughput microscopic imaging processes. Moreover, even if the region is selected in a principled way, colocalization may not be directly inferred from the value of the correlation coefficient computed on the region because the value of the coefficient itself does not translate into statistical significance. This problem could be alleviated using permutation tests, as suggested by \cite{costes2004automatic}. In doing so, however, one still neglects the fact that the region of interest is selected based upon its plausibility of colocalization, and the resulting p-value may appear significant merely because of our failure to adjust for the selection bias. The present work is motivated by this need for an automated and statistically valid way to detect colocalization.

Colocalization analysis can naturally be formulated as an example of a broad class of problems that we shall refer to as structured correlation detection where we observe multiple collections of random variables on a common domain and want to determine if there is a subset of these variables that are correlated. These types of problem arise naturally in many different fields. For example, in finance, detecting time periods where two common stocks show unusual correlation is essential to the so-called pairs trading strategy \citep[see, e.g.][]{pairs04}. Other potential examples of structured correlation detection problems can also be found in \cite{chen1997testing}, \cite{Robinson08}, \cite{Wied11}, and \cite{Rodionov15}, among many others. To fix ideas, in what follows, we shall focus our discussion in the context of colocalization analysis. More specifically, denote by $\II$ the index set of all pixels in the field of view. In a typical two or three dimensional image, $\II$ could be a lattice of the corresponding dimension. In practice, it is also possible that $\II$ is a certain subset of a lattice. For example, when investigating intercellular activities, $\II$ only includes pixels that correspond to the interior of a cell, or a compartment (e.g. nucleus) of a cell. For each location $i\in \II$, let $X_{i}$ and $Y_i$ be the intensities measured at the two channels respectively, as illustrated in the left panel of Figure \ref{fig:intensity}.

\begin{figure}[ht!]
\begin{center}
\begin{tikzpicture}[scale=.6,every node/.style={minimum size=1cm}]
   \begin{scope}[
           yshift=-83,every node/.append style={
           yslant=0.5,xslant=-1},yslant=0.5,xslant=-1
           ]
       \fill[white,fill opacity=0.9] (0,0) rectangle (5,5);
       \draw[step=4mm, red] (0,0) grid (5,5);
       \draw[red,thick] (0,0) rectangle (5,5);
       \fill[red] (1.25,1.55) rectangle (1.55,1.25); 
       \fill[red] (0.85,1.55) rectangle (1.15,1.25); 
       \fill[red] (0.85,1.15) rectangle (1.15,0.85); 
       \fill[red] (1.25,0.75) rectangle (1.55,0.45); 
   \end{scope}
   \begin{scope}[
           yshift=0,every node/.append style={
           yslant=0.5,xslant=-1},yslant=0.5,xslant=-1
           ]
       \fill[white,fill opacity=0.9] (0,0) rectangle (5,5);
       \draw[step=4mm, green] (0,0) grid (5,5); 
       \draw[green,thick] (0,0) rectangle (5,5);
       \fill[green] (1.25,1.55) rectangle (1.55,1.25);
       \fill[green] (0.85,1.55) rectangle (1.15,1.25);
       \fill[green] (0.85,1.15) rectangle (1.15,0.85);
       \fill[green] (1.25,0.75) rectangle (1.55,0.45);
   \end{scope}
%
%
   \draw[-latex,thick,black](-3,5)node[left]{$X_{i}$}
       to[out=0,in=90] (-.4,1.4);
   \draw[-latex,thick,black](-3,1.1)node[left]{$Y_{i}$}
       to[out=0,in=200] (-.5,-1.7);
%
\end{tikzpicture}
\hskip 20pt
\begin{tikzpicture}[scale=.6,every node/.style={minimum size=1cm}]
   \begin{scope}[
           yshift=-53,every node/.append style={
           yslant=0.5,xslant=-1},yslant=0.5,xslant=-1
           ]
       \fill[white,fill opacity=0.8] (0,0) rectangle (5,5);
       \fill[blue!70!white,fill opacity=0.9] (0.5,0.5) rectangle (2,2.5);
       \draw[step=0.5, red] (0,0) grid (5,5);
       \draw[red,thick] (0,0) rectangle (5,5);
   \end{scope}
   \begin{scope}[
           yshift=0,every node/.append style={
           yslant=0.5,xslant=-1},yslant=0.5,xslant=-1
           ]
       \fill[white,fill opacity=0.8] (0,0) rectangle (5,5);
       \fill[blue!70!white,fill opacity=0.9] (0.5,0.5) rectangle (2,2.5);
       \draw[step=0.5, green] (0,0) grid (5,5); 
       \draw[green,thick] (0,0) rectangle (5,5);
   \end{scope}
   \draw[-latex,thick,black](6,3)node[right]{}
       to[out=180,in=90] (-.4,1.4);
   \draw[-latex,thick,black](6,3)node[right]{$R$}
       to[out=180,in=90] (-.4,-0.4);
\end{tikzpicture}
        \caption{Pixel view of dual channel images.}\label{fig:intensity}
 \end{center}
\end{figure}
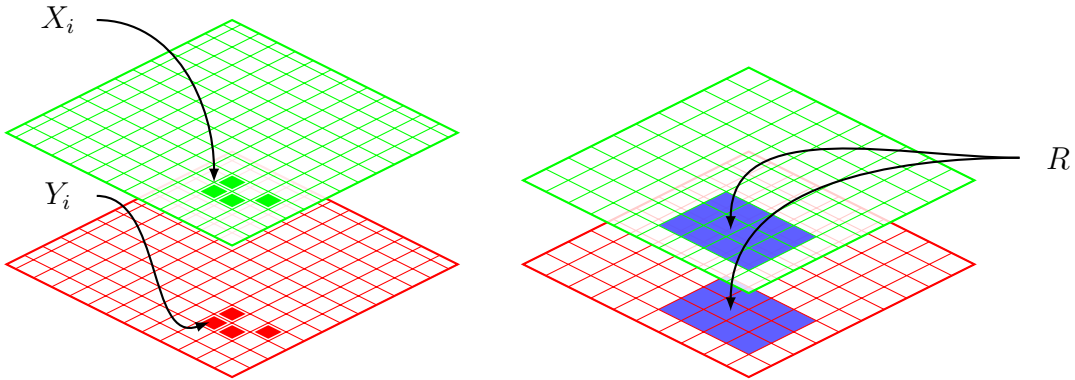

In the absence of colocalization, we assume that $X_{i}$ and $Y_{i}$ are uncorrelated and can be modeled as
\begin{equation}
\label{eq:modelh0}
\begin{bmatrix} X_i\\Y_i \end{bmatrix}\sim N\left(\begin{bmatrix} \mu_1\\\mu_2 \end{bmatrix},\begin{bmatrix} \sigma_1^2&0\\ 0&\sigma_2^2 \end{bmatrix}\right).
\end{equation}
where the marginal means $\mu_1$ and $\mu_2$ and variances $\sigma_1^2$ and $\sigma_2^2$ may be unknown. In the presence of colocalizaton, $X_i$ and $Y_i$ are correlated, and we therefore treat them as observations from a correlated bivariate normal distribution
\begin{equation}
\label{eq:model}
\begin{bmatrix} X_i\\Y_i \end{bmatrix}\sim N\left(\begin{bmatrix} \mu_1\\\mu_2 \end{bmatrix},\begin{bmatrix} \sigma_1^2&\rho\sigma_1\sigma_2\\ \rho\sigma_1\sigma_2&\sigma_2^2 \end{bmatrix}\right).
\end{equation}
When colocalization occurs, it typically does not occur at isolated locations. As a result, the region $R$ of colocalization is more structured than an arbitrary subset of $\II$. For example, colocalization can frequently be observed on a contiguous region $R$, as illustrated in the right panel of Figure \ref{fig:intensity}. Let $\Rcal$ be a library containing all possible regions where correlation may be present. For example, $\Rcal$ could be the collection of all ellipses or polygons on a two dimensional lattice ($\II$).  The primary goal of correlation detection in general, and colocalization analysis in particular, is to check if there is an unknown region $R\in \Rcal$ such that (\ref{eq:modelh0}) holds for all $i\in\II\setminus R$, and (\ref{eq:model}) holds for all $i\in R$ and for some $\rho\neq 0$.

The fact that we do not know on which region $R\in \Rcal$ correlation is present naturally brings about the issue of structured multiple testing. Various aspects of structured multiple testing has been studied in recent years. See, e.g., \cite{desolneux2003maximal}, \cite{perone2004false}, \cite{arias2005near}, \cite{hall2010innovated}, \cite{walther2010optimal}, \cite{arias2011detection}, \cite{FanHanGu12}, \cite{chan2013detection}, and \cite{cai2014rate}, among many others. The problem of colocalization analysis, however, is unique in at least two critical aspects. First, most if not all existing works focus exclusively on signals at the mean level. Our interest here, on the other hand, is on the correlation coefficient. Not only do we want to detect signals in terms of the correlation, but also we want to do so in the presence of unknown marginal means and variances as nuisance parameters. In addition, prior work typically deals with situations where $\II$ is one-dimensional and $\Rcal$ is a collection of segments, which is amenable to statistical analysis and sometimes also allows for fast computation. In the case of colocalization analysis, however, the index set $\II$ is multidimensional and the set $\Rcal$ usually contains more complex geometric shapes. To address both challenges, we develop in this article a general methodology for correlation detection on a general domain that can be readily applied for colocalization analysis.

Our method is motivated by an observation that, for a fairly general family of $\Rcal$, the likelihood ratio statistics exhibit a subtle dependence on the size of a candidate region. As a result, their direct use for correlation detection may lead to nontrivial bias, and substantially reduced power. To overcome this problem, we introduce a size-corrected likelihood ratio statistic and show that scanning with the corrected likelihood ratio statistic yields optimal correlation detection for a large family of $\Rcal$ in the sense that it can detect elevated correlation at a level no other detectors could improve significantly. We show that the corrected likelihood ratio statistic based scan can also be computed efficiently for a large collection of geometric shapes in arbitrary dimension, characterized by their covering numbers under a suitable semimetric. This includes among others, convex polygons or ellipses, arguably two of the most commonly encountered shapes in practice.

The rest of the paper is organized as follows. In the next section, we introduce a size-corrected likelihood ratio statistic for a general index set $\II$ and collection $\Rcal$, and discuss how it can be used to detect colocalization. We shall also investigate efficient implementation as well as theoretical properties of the proposed method. Section \ref{sec:ex} gives several concrete examples of $\II$ and $\Rcal$ and show how the general methodology can be applied to these specific situations. Numerical experiments are presented in Section \ref{sec:num} to further illustrate the merits of the proposed methods. Proofs are given in Section \ref{sec:proof}, with auxiliary results relegated to the Appendix.

\section{Structured Correlation Detection}
\label{sec:meth}

In a general correlation detection problem, $\II$ can be an arbitrary index set and $\Rcal\subset 2^{\II}$ is a given collection of subsets of $\II$, and we are interested in testing the null hypothesis $H_0$ that (\ref{eq:modelh0}) holds for all $i\in \II$ against a composite alternative $H_a$ that (\ref{eq:model}) holds for all $i\in R$ whereas (\ref{eq:modelh0}) holds for all $i\notin R$, for some $R\in \Rcal$. We shall argue in this section that the usual maximum log-likelihood ratio statistic may not be suitable for correlation detection, and introduce a size-based correction to address the problem.

\subsection{Likelihood ratio statistics}
A natural test statistic for our purpose is the scan, or maximum log-likelihood ratio statistic:
$$
L^\ast=\max_{R\in \Rcal} L_R,
$$
where $L_R$ is the log-likelihood ratio statistic for testing $H_0$:
\begin{equation}
\label{eq:lr}
L_{R}=-(|R|-2)\log{(1-r_{R}^{2})}.
\end{equation}
Here $|R|$ is the cardinality of $R$ and $r_{R}$ is Pearson correlation within $R$:
$$r_{R}=\frac{\sum_{i\in R}(X_{i}-\bar{X}_R )(Y_{i}-\bar{Y}_R )}{\sqrt{\sum_{i\in R}(X_{i}-\bar{X}_R )^{2}\sum_{i\in R}(Y_{i}-\bar{Y}_R )^{2}}}$$
where
$$\bar{X}_R ={1\over |R|}\sum_{i\in R} X_i,\qquad{\rm and}\qquad \bar{Y}_R ={1\over |R|}\sum_{i\in R} Y_i.$$
It is worth noting that strictly speaking, $L_R$ defined by (\ref{eq:lr}) is not the genuine likelihood ratio statistic, which would replace the factor $|R|-2$ on the right hand side of (\ref{eq:lr}) by $|R|$. Our modification accounts for the correct degrees of freedom so that, for a fixed uncorrelated region $R$,
$$L_R\approx (|R|-2) {r_R^2\over 1-r_R^2}\sim t^2_{|R|-2}.$$
See, e.g., \cite{Muir08}. Obviously, when $|R|$ is large, $L_R$ approximately follows a $\chi^2_1$ distribution and the effect of such correction becomes negligible.

The use of scan or maximum log-likelihood ratio statistics for detecting spatial clusters or signals is very common across a multitude of fields. See, e.g., \cite{Fan96}, \cite{FanZhang01} and \cite{glaz2001scan} and references therein. Their popularity is also justified as it is well known that scan statistics are minimax optimal if $|R|$ is small when compared with $|\II|$. See, e.g., \cite{lepski2000asymptotically}, \cite{dumbgen2001multiscale}, and \cite{dumbgen2008multiscale}. But we show here that such a strategy may not be effective for correlation detection unless $|R|$ is very small. In particular, we show that, in the absence of a correlated region, the magnitude of $L_R$ depends critically on its size $|R|$, and therefore, the maximum of $L_R$'s over regions of different sizes is typically dominated by those evaluated on smaller regions. As a result, direct use of $L^\ast$ for correlation detection could be substantially conservative in detecting larger correlated regions.

We now examine the behavior of the maximum of $L_R$ for $R\in \Rcal$ of a particular size. Note that it is possible that there is no element in $\Rcal$ that is of a particular size. To avoid lengthy discussion to account for such trivial situations, we shall consider instead the subset
$$
\Rcal(A)=\{R\in \Rcal: |R|\in (A/2,A]\},
$$
for some positive $A$. In other words, $\Rcal(A)$ is the collection of all possible correlated regions of size between $A/2$ and $A$. The factor of $1/2$ is chosen arbitrarily and can be replaced by any constant in $(0,1)$. Basically $\Rcal(A)$ includes elements of $\Rcal$ that, roughly speaking, are of size $A$. It is clear that
$$
L^\ast=\max_A \left\{\max_{R\in \Rcal(A)}L_R\right\}.
$$
We shall argue that $\max_{R\in \Rcal(A)}L_R$ may have different magnitudes for different $A$s under the null hypothesis. In particular, we shall show that for a large collection of $\Rcal(A)$, $\max_{R\in \Rcal(A)}L_R$ can be characterized precisely.

Obviously, the behavior of $\max_{R\in \Rcal(A)}L_R$ depends on the complexity of $\Rcal(A)$. More specifically, we shall first assume that the possible correlated regions are indeed more structured than arbitrary subsets of $\II$ in that there exist constants $c_1,c_2>0$ independent of $A$ and $n:=|\II|$ such that
\begin{equation}
\label{eq:comp}
|\Rcal(A)|\le c_1 nA^{c_2}.
\end{equation}
In other words, (\ref{eq:comp}) dictates that $|\Rcal(A)|$ increases with $A$ only polynomially. This is to be contrasted with the completely unstructured setting where $\Rcal=2^{\II}$, the collection of all subsets of $\II$, and the number of all subsets of $\II$ of size $A$ is of the order $n^A$, which depends on $A$ exponentially. Condition (\ref{eq:comp}) essentially requires that $\Rcal$ is a much smaller subset of $2^\II$ and therefore indeed imposes structures on the possible regions of correlation.

Na\"ive counting of the size of $\Rcal(A)$ as above, however, may not reflect its real complexity. To this end, we also need to characterize the dissimilarity of elements of $\Rcal(A)$. For any two sets $R_1,R_2\in 2^{\II}$, write
$$
d(R_1,R_2)=1-{|R_1\cap R_2|\over \sqrt{|R_1||R_2|}}.
$$
It is easy to see that $d(\cdot,\cdot)$ is a semimetric on $2^{\II}$. We now consider the covering number of sets of a particular size in $\Rcal$ under $d$. Let $N(A,\epsilon)$ be the smallest integer such that there is a subset, denoted by $\Rcal_{\rm app}(A,\epsilon)$, of $\Rcal$ with
$$|\Rcal_{\rm app}(A,\epsilon)|=N(A,\epsilon)$$
and
$$
\sup_{R_1\in \Rcal(A)}\inf_{R_2\in \Rcal_{\rm app}(A,\epsilon)} d(R_1,R_2)\le \epsilon.
$$
It is worth emphasizing that we require the covering set $\Rcal_{\rm app}(A,\epsilon)\subset\Rcal$. It is clear that $N(A,\epsilon)$ is a decreasing function of $\epsilon$ and $N(A,0)=|\Rcal(A)|$. We shall also adopt the convention that $N(A,1)$ represents the largest number of non-overlapping elements from $\Rcal(A)$. Clearly, without any structural assumption, we can always divide $\II$ into $n/A$ subsets of size $A$. We shall assume that the collection $\Rcal(A)$ is actually rich enough that
\begin{equation}
\label{eq:entropy0}
N(A,1)\ge c_3{n\over A},
\end{equation}
for some constant $c_3>0$. Conversely, we shall assume also that there are not too many ``distinct'' sets in $\Rcal(A)$ in that there are certain constants $c_4,c_5,c_6>0$ independent of $A$ and $N$ such that
\begin{equation}
\label{eq:entropy}
N(A,\epsilon)\le c_4{n\over A}\left(\log {n\over A}\right)^{c_5}\left(1\over \epsilon\right)^{c_6}.
\end{equation}

Conditions (\ref{eq:comp}), (\ref{eq:entropy0}) and (\ref{eq:entropy}) are fairly general and hold for many common choices of $\Rcal$. Consider, for example, the case when $\II=\{1,2,\ldots, n\}$ is a one-dimensional sequence and
$$\Rcal=\{(a,b]: 0\le a<b\le n\}$$
is the collection of all possible segments on $\II$. It is clear that there are at most $n-\ell$ segments of length $\ell$ for any $\ell\in (A/2,A]$, which means
$$
|\Rcal(A)|\le {1\over 2}nA.
$$
In addition, for any $A$, there are at least $\lfloor n/A\rfloor$ distinct segments
$$\{((i-1)A, iA]: i=1,\ldots, \lfloor n/A\rfloor\},$$
of length $A$, implying that (\ref{eq:entropy0}) also holds. On the other hand, it is not hard to see that the collection of all segments starting at $(i-1)\epsilon A/2$ ($i=1,2,\ldots$) of length between $A/2$ and $A$ can approximate any segment of length between $A/2$ and $A$ with approximation error $\epsilon$. Therefore,
$$
N(A,\epsilon)\le \left(A/2\over \epsilon A/2\right) \left(n\over \epsilon A/2\right)=2{n\over A}\left(1\over \epsilon\right)^2,
$$
so that (\ref{eq:entropy}) also holds. In the next section, we shall consider more complex examples motivated by colocalization analysis and show that these conditions are expected to hold in fairly general settings.

We now show that if $\Rcal(A)$ satisfies these conditions, $\max_{R\in \Rcal(A)}L_R$ concentrates sharply around $2\log(n/A)$.

\begin{theorem}
\label{th:emp}
Suppose that (\ref{eq:modelh0}) holds for all $i\in \II$. Assume also that (\ref{eq:comp}) and (\ref{eq:entropy}) hold. Then
\begin{equation}
\label{eq:maxupper}
\max_{R\in \Rcal(A)}L_R\le  2\log\left({n/A}\right)+O_p\left(\log\log\left({n/A}\right)\right),\qquad {\rm as\ }n\to \infty.
\end{equation}
If in addition, (\ref{eq:entropy0}) holds, then
\begin{equation}
\label{eq:maxeq}
\max_{R\in \Rcal(A)}L_R=2\log\left({n/A}\right)+O_p\left(\log\log\left({n/A}\right)\right),\qquad {\rm as\ }n\to \infty.
\end{equation}
\end{theorem}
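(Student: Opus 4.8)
The plan is to pass from the family $\{L_R\}$ to the supremum of a genuinely Gaussian process by conditioning on one channel, and then to read off the sharp constant from a single-scale union bound calibrated by the covering numbers $N(A,\epsilon)$. Since $\sum_{i\in R}(X_i-\bar X_R)(Y_i-\bar Y_R)=\sum_{i\in R}(X_i-\bar X_R)Y_i$, I would introduce the linear statistic $T_R=\big(\sum_{i\in R}(X_i-\bar X_R)Y_i\big)\big/\big(\sigma_2(\sum_{i\in R}(X_i-\bar X_R)^2)^{1/2}\big)$. Conditionally on $\{X_i\}_{i\in\II}$, each $T_R$ is an \emph{exactly} standard normal functional of the independent $Y_i$, and $\{T_R:R\in\Rcal(A)\}$ is an exactly centered Gaussian process whose covariance equals $|R_1\cap R_2|/\sqrt{|R_1||R_2|}$ up to centering terms that are negligible by the law of large numbers for the i.i.d.\ $X_i$. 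Consequently $\EE[(T_{R_1}-T_{R_2})^2\mid X]\approx 2\,d(R_1,R_2)$ on a high-probability event, so the semimetric $d$ is exactly the canonical metric of the process and $N(A,\epsilon)$ governs its increments. The first lemma I would need is the uniform reduction $L_R=T_R^2(1+o_p(1))$ over $R\in\Rcal(A)$: using the classical identity $L_R=(|R|-2)\log(1+t_R^2/(|R|-2))$ with $t_R^2=T_R^2\,\sigma_2^2/\hat\sigma_{R}^2$, where $\hat\sigma_R^2$ is the within-$R$ residual variance of $Y$ on $X$, this follows by bounding $\sup_R|\hat\sigma_R^2/\sigma_2^2-1|$ and $\sup_R t_R^2/|R|$; both are controlled in the regime $\log(n/A)=o(A^{1/3})$ where the $t^2\approx\chi^2_1$ approximation noted after (\ref{eq:lr}) is valid, so the reduction error stays within the $O_p(\log\log(n/A))$ budget.

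For the upper bound (\ref{eq:maxupper}) it then suffices to show $\max_{R\in\Rcal(A)}T_R\le\sqrt{2\log(n/A)}+O_p(\log\log(n/A)/\sqrt{\log(n/A)})$, since squaring turns this into $2\log(n/A)+O_p(\log\log(n/A))$. I would split $T_R=T_{\pi(R)}+(T_R-T_{\pi(R)})$, where $\pi(R)$ is the nearest point in a net $\Rcal_{\rm app}(A,\epsilon_n)$ of cardinality $N(A,\epsilon_n)$. For the net term, the exact conditional standard normality gives the union bound $\PP(\max_{R'}T_{R'}\ge t\mid X)\le N(A,\epsilon_n)e^{-t^2/2}$; choosing $t^2=2\log N(A,\epsilon_n)+2\log\log N(A,\epsilon_n)$ and inserting (\ref{eq:comp}) and (\ref{eq:entropy}) yields $\max_{R'}T_{R'}\le\sqrt{2\log(n/A)+O(\log\log(n/A))}$ with probability tending to one, which is $\sqrt{2\log(n/A)}$ up to the required $O(\log\log(n/A)/\sqrt{\log(n/A)})$ correction. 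The residual $\max_R(T_R-T_{\pi(R)})$ I would bound by chaining over dyadic scales below $\epsilon_n$, which by (\ref{eq:entropy}) and Borell--TIS contributes $O_p(\sqrt{\epsilon_n\log(n/A)})$.

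The crux, and the step I expect to be the main obstacle, is reconciling these two contributions within the tight $O_p(\log\log(n/A))$ error for the \emph{squared} maximum. Because $(\sqrt{2\log(n/A)}+\xi)^2=2\log(n/A)+2\sqrt{2\log(n/A)}\,\xi+\xi^2$, an additive fluctuation $\xi$ in $\max_RT_R$ is amplified by a factor $\sqrt{\log(n/A)}$; an $O_p(1)$ chaining residual, harmless at the level of $T_R$, would therefore overwhelm the budget. This forces $\epsilon_n$ to be polynomially small in $1/\log(n/A)$, say $\epsilon_n=(\log(n/A))^{-2}$, so that the residual is $O_p((\log(n/A))^{-1/2})$ and its amplified contribution is only $O_p(1)$. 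The saving grace is that (\ref{eq:entropy}) depends on $\epsilon$ only polynomially, so this small $\epsilon_n$ inflates $\log N(A,\epsilon_n)$ by merely $c_6\log(1/\epsilon_n)=O(\log\log(n/A))$; the polynomial-in-$1/\epsilon$ form of the covering bound is exactly what makes the balance feasible. Verifying that the entropy cost and the amplified residual simultaneously fit into $O_p(\log\log(n/A))$, uniformly after integrating out the conditioning on $X$ (which requires the canonical-metric estimate $\EE[(T_{R_1}-T_{R_2})^2\mid X]\le C\,d(R_1,R_2)$ to hold on an event of probability $1-o(1)$), is the technical heart of the argument.

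Finally, for the matching lower bound in (\ref{eq:maxeq}) I would invoke (\ref{eq:entropy0}): by the convention $N(A,1)\ge c_3 n/A$ there exist at least $c_3 n/A$ pairwise non-overlapping regions in $\Rcal(A)$, and on disjoint supports the $Y$-functionals are independent, so conditionally on $X$ the corresponding $T_R$ are independent standard normals. The maximum of $m\ge c_3n/A$ independent standard normals is $\sqrt{2\log m}-O_p(\log\log m/\sqrt{\log m})$, whence $\max_{R\in\Rcal(A)}T_R^2\ge 2\log(n/A)-O_p(\log\log(n/A))$; transferring through the reduction $L_R=T_R^2(1+o_p(1))$ and combining with the upper bound gives (\ref{eq:maxeq}).
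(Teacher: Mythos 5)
Your route is genuinely different from the paper's: you condition on the $X$-channel to turn $\{L_R\}$ into (approximately) the squared supremum of a Gaussian process and then run a net-plus-Dudley argument, whereas the paper chains directly on the statistics $L_R$ themselves, using a bespoke perturbation lemma for $|L_{R_1}-L_{R_2}|$ in terms of $|R_1\cap R_2|/\sqrt{|R_1||R_2|}$ and exact $t_{|R|-2}$ tails. Your diagnosis of the central difficulty is exactly right and matches the paper's resolution: the $\sqrt{\log(n/A)}$ amplification upon squaring forces the net resolution $\epsilon_n=(\log(n/A))^{-2}$, which is precisely the paper's initial chaining scale $e^{-s_\ast}$ with $s_\ast=2\log\log(n/A)$, and the polynomial $(1/\epsilon)^{c_6}$ dependence in (\ref{eq:entropy}) is what keeps the entropy cost at $O(\log\log(n/A))$. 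The lower-bound argument via disjoint regions and independent maxima is also essentially the paper's.

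There are, however, two concrete gaps. First, your entire reduction $L_R=T_R^2(1+o_p(1))$ is explicitly confined to the regime $\log(n/A)=o(A^{1/3})$, but the theorem is asserted for every $A$. For $A$ bounded or polylogarithmic in $n$ the reduction fails: $\sup_R t_R^2/|R|$ is not small, and $\sup_R|\hat\sigma_R^2/\sigma_2^2-1|$, taken over the $\lesssim c_1nA^{c_2}$ regions, has fluctuations of order $\sqrt{\log n/A}$ which do not vanish. You need a separate branch for small $A$ --- a direct union bound combining the exact tail $\PP(L_R>x)\le Ce^{-x/2}$ (valid for all $|R|$ from the $t$-distribution of the sample correlation) with (\ref{eq:comp}); this is how the paper handles $A\le(\log n)^5$, and it yields $2\log n+O(\log\log n)=2\log(n/A)+O(\log\log(n/A))$ there. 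Second, the estimate you flag as needed, $\EE[(T_{R_1}-T_{R_2})^2\mid X]\le C\,d(R_1,R_2)$ uniformly on a high-probability event, is false as stated: for pairs with $d(R_1,R_2)\ll\sqrt{\log n/A}$ the empirical covariance $\sum_{i\in R_1\cap R_2}(X_i-\bar X_{R_1})(X_i-\bar X_{R_2})/(\cdots)$ deviates from $1-d(R_1,R_2)$ by an irreducible additive floor of order $\sqrt{\log n/A}$ (uniformly over the $\sim(nA^{c_2})^2$ pairs), so the conditional canonical metric does not tend to zero with $d$. The chaining must therefore be truncated at that resolution and the residual discrete maximum over all of $\Rcal(A)$ bounded by a crude union bound, which contributes $O((\log n/A)^{1/4}\sqrt{\log n})$ and closes only because $A$ is at least a large power of $\log n$ in this branch --- reinforcing that the two-regime split is not optional but structurally necessary for your argument.
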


We adopted a generic chaining \citep[see, e.g.,][]{Tal00} argument for the proof of Theorem \ref{th:emp}. The analysis itself may be of independent interest and can be applied to other similar problems such as deriving asymptotic bounds for likelihood ratio statistics in structured detection of mean shifts.

\subsection{Size-corrected likelihood ratio statistics}
An immediate consequence of Theorem \ref{th:emp} is that the value of $L^\ast$ alone may not be a good measure of the evidence of correlation. It also depends critically on the size of $R$ for which $L_R$ is maximized. As such, when using $L^\ast$ as a test statistic, the critical value is largely driven by $\max_{R\in \Rcal(A)}L_R$ corresponding to smaller $A$'s. Therefore, a test based on $L^\ast$ could be too conservative when correlation is present on a region with a large cardinality. Motivated by this observation, we now consider normalizing $\max_{R\in \Rcal(A)}L_R$, leading to a size-corrected log-likelihood ratio statistic:
\begin{eqnarray*}
T^\ast&=&\max_A \left\{{1\over \log \log(n/A)}\left[\max_{R\in \Rcal:|R|=A}L_R- 2 \log \left(n/A\right)\right]\right\}\\
&=&\max_{R\in \Rcal} \left\{{1\over \log\log(n/|R|)}\left[L_R-2\log\left({n\over |R|}\right)\right]\right\}.
\end{eqnarray*}
For brevity, we shall hereafter assume that $\max_{R\in \Rcal}|R|\le n/4$. In general, we can always replace $\log x$ by $\log_+(x):=\log(\max\{x,1\})$ to avoid the trivial cases where the logarithms may not be well defined.

It is clear that under the null hypothesis, the distribution of $T^\ast$ is invariant to the nuisance parameters and therefore can be readily evaluated through Monte Carlo simulation. More specifically, one can simulate $(X_{i}^\ast,Y_{i}^\ast)^\top\sim N(0,I_2)$ independently for $i\in \II$, and compute $T^\ast$ for the simulated data. The distribution of $T^\ast$ can be approximated by the empirical distribution of the test statistics estimated by repeating this process. Denote by $q_\alpha$ the $(1-\alpha)$-quantile of $T^\ast$ under the null hypothesis. We shall then proceed to reject $H_0$ if and only if $T^\ast>q_\alpha$. This clearly is an $\alpha$-level test by construction. We shall show that in Section \ref{sec:th}, it is also a powerful test for detecting correlation.

One of the potential challenges for scan statistics is computation. To compute $T^\ast$, we need to enumerate all elements in $\Rcal$, which could be quite burdensome. A key insight obtained from studying $T^\ast$ however suggests an alternative to $T^\ast$ that is more amenable for computation. More specifically, it is noted that although numerous, regions of large size, namely $\Rcal(A)$ with a large $A$, may have fewer ``distinct'' elements. As such, we do not need to evaluate $L_R$ on each $R\in \Rcal(A)$ but rather on a smaller covering set $\Rcal(A)$.

With slight abuse of notation, write
$$
\Rcal_k=\{R\in \Rcal: |R|\in (2^{-k}n,2^{-(k-1)}n]\},\qquad k=2,\ldots,\lfloor\log_2 n\rfloor+1.
$$
It is clear that $T^\ast=\max_k T_k^\ast$ where
$$
T_k^\ast=\max_{R\in \Rcal_k} \left\{{1\over \log\log(n/|R|)}\left[L_R-2\log\left({n\over |R|}\right)\right]\right\}.
$$
It turns out that for
$$
k\le k_\ast:=\lfloor\log_2 n-2\log_2\log n\rfloor,
$$
we can approximate $T_k^\ast$ very well by scanning through only a small number of $R$s from $\Rcal_k$. In particular, let $\tilde{\Rcal}_k$ be a $1/(4k^2)$ covering set of $\Rcal_k$ with
$$
|\tilde{\Rcal}_k|=N\left(2^{-(k-1)}n,{1\over 4k^2}\right).
$$
We shall proceed to approximate $T^\ast_k$ by
$$
\tilde{T}_k^\ast=\max_{R\in \tilde{\Rcal}_k} \left\{{1\over \log\log(n/|R|)}\left[L_R-2\log\left({n\over |R|}\right)\right]\right\},
$$
when $k\le k_\ast$. Denote by
$$
\tilde{T}^\ast=\max_k \tilde{T}_k^\ast,
$$
where, with slight abuse of notation, $\tilde{T}_k^\ast=T_k^\ast$ for $k>k_\ast$. Instead of using $T^\ast$, we shall now consider $\tilde{T}^\ast$ as our test statistic. As before, we can compute the $1-\alpha$ quantile $\tilde{q}_\alpha$ of $\tilde{T}^\ast$ under the null hypothesis by Monte Carlo method and proceed to reject $H_0$ if and only if $\tilde{T}^\ast>\tilde{q}_\alpha$.

Compared with $T^\ast$, the new statistic $\tilde{T}^\ast$ is much more computationally friendly. More specifically, under the complexity condition (\ref{eq:entropy}), it amounts to computing the corrected likelihood ratio statistic on a total of
\begin{eqnarray*}
&&\sum_{k\le k_\ast} N\left(2^{-(k-1)}n,{1\over 4k^2}\right)+\sum_{k>k_\ast} N\left(2^{-(k-1)}n,0\right)\\
&\le& c_4(\log 2)^{c_5}4^{c_6}n(\log n)^{c_5+2c_6+1}+c_{1}n(\log n)^{2c_2+1}
\end{eqnarray*}
sets. In other words, the number of size-corrected likelihood ratio statistics we need to evaluate in computing $\tilde{T}^\ast$ is linear in $n$, up to a certain polynomial of logarithmic factor.

\section{Correlation Detection on a Lattice}
\label{sec:ex}

While a general methodology was presented for correlation detection under a generic domain in the previous section, we now examine more specific examples motivated by colocalization analysis in microscopic imaging, and discuss further the operating characteristics of the proposed approach. In particular, we shall focus on correlation detection in a two-dimensional lattice where $\II=\{(i,j): 1\le i,j\le m\}$ so that $n=m^2$, for concreteness, although the discussion can be extended straightforwardly to more general situations such as rectangular or higher order lattices.

Most of the imaging tools allow users to visually identify areas of colocalization using variants of the Lasso tool. This allows either a convex polygonal or ellipsoidal region to be selected. Motivated by this, we shall consider specifically in this section the detection of correlation on either an unknown convex polygonal or ellipsoidal region on a two-dimensional lattice. We show that in both cases, the collection $\Rcal$ of all possible correlated areas satisfies conditions (\ref{eq:comp}), (\ref{eq:entropy0}) and (\ref{eq:entropy}) and therefore the size-corrected scan statistic $\tilde{T}^\ast$ can be efficiently computed.

\subsection{Polygons}
We first treat convex $k$-polygons. Any $k$-polygon can be indexed by its vertices $\{(a_i,b_i): 1\le i\le k\}$, and will therefore be denoted by $K(\{(a_i,b_i): 1\le i\le k\})$. For expositional ease, we focus on the case when the vertices are located on the lattice, although the general case can also be treated with further care. The convexity of a polygon allows us to define its center as $(\bar{a},\bar{b})$ where
$$
\bar{a}={1\over k}\sum_{i=1}^k a_k,\qquad {\rm and}\qquad \bar{b}={1\over k}\sum_{i=1}^k b_i.
$$
Denote by
$$
r_i=\sqrt{(a_i-\bar{a})^2+(b_i-\bar{b})^2}
$$
the distance from the $i$th vertex to the center. To fix ideas, we will focus attention on nearly regular polygons, where $r_i$s are of the same order. In this case, the collection of possible correlated regions is:
$$
\Rcal_{\rm polygon}(k,M)=\left\{K(\{(a_i,b_i): 1\le i\le k\}): \max_{i}{r_{i}}/\min_{i}{r_{i}}\le M\right\}.
$$
Recall that
$$
\Rcal_{\rm polygon}(A;k,M)=\left\{R\in \Rcal_{\rm polygon}(k,M): |R|\in (A/2,A]\right\}.
$$
The following result states that (\ref{eq:comp}) holds for $\Rcal_{\rm polygon}(k,M)$.

\begin{proposition}
\label{pr:poly1}
There exists a constant $c>0$ depending on $k$ and $M$ only such that
$$
|\Rcal_{\rm polygon}(A;k,M)|\le cn A^k.
$$
\end{proposition}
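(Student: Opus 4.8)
The goal is to bound the number of nearly-regular convex $k$-polygons whose area lies in $(A/2,A]$. The plan is to parametrize each such polygon by its center and its $k$ vertices, and then argue that once the center and approximate size are fixed, each vertex is confined to a region of controlled area, so that the total count factorizes into a product over the $k$ vertices times the number of choices for the center.

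First I would make precise the relationship between the cardinality $|R|$ and the geometric quantities $r_i$. Since the vertices lie on the lattice and the polygon is convex, $|R|$ is comparable (up to constants) to the area of $K(\{(a_i,b_i)\})$, which in turn is controlled from above and below by the circumradius-type quantity $\max_i r_i$ and inradius-type quantity $\min_i r_i$. The near-regularity constraint $\max_i r_i/\min_i r_i\le M$ is exactly what lets me conclude that $|R|\in(A/2,A]$ forces every $r_i$ to lie in an interval $[c' \sqrt{A}, c'' \sqrt{A}]$ for constants $c',c''$ depending only on $k$ and $M$. This is the crux that converts an area constraint into a per-vertex radial constraint.

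Next, with the center $(\bar a,\bar b)$ fixed, each vertex $(a_i,b_i)$ must lie in the annulus $\{c'\sqrt A\le r\le c''\sqrt A\}$ around the center, which is a lattice region of area $O(A)$ (with constant depending on $k,M$). Hence there are $O(A)$ choices for each of the $k$ vertices, giving $O(A^k)$ polygons for each fixed center. The center $(\bar a,\bar b)$, being an average of lattice points, ranges over at most $O(n)$ distinct values (in fact it lies on a refined lattice of the $m\times m$ grid, whose cardinality is still $O(m^2)=O(n)$). Multiplying, I get $|\Rcal_{\rm polygon}(A;k,M)|\le c\,n\,A^k$, which is the claimed bound.

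The main obstacle I anticipate is the two-sided comparison between $|R|$ (a lattice cardinality) and the continuous geometric area, together with the comparison between area and the radii $r_i$ under the near-regularity assumption. Getting the lower bound $r_i\ge c'\sqrt A$ requires ruling out degenerate thin polygons, and this is precisely where the constraint $\max_i r_i/\min_i r_i\le M$ must be invoked: without it, a polygon of large area could have one vertex arbitrarily far out, breaking the annulus confinement. I would also need to be slightly careful that the center need not be a lattice point but lies on the $(1/k)$-refinement of the lattice, so its count is still $O(n)$ rather than something larger; this is a routine but necessary check. Once these comparisons are established with explicit constants depending only on $k$ and $M$, the counting argument is elementary.
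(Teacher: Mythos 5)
Your argument is essentially the paper's: both reduce the count to (number of possible center/base locations, which is $O(n)$) times (annulus area per vertex, which is $O(A)$)$^k$, via the containment $\Rcal_{\rm polygon}(A;k,M)\subset\{K: c_1A^{1/2}\le r_i\le c_2A^{1/2},\ i=1,\ldots,k\}$ — exactly where the near-regularity constraint $\max_i r_i/\min_i r_i\le M$ enters — with the paper merely interposing a dyadic decomposition over the radii $r_i$ before collapsing it with that same containment. Both your write-up and the paper leave the key two-sided comparison $|R|\asymp r_i^2$ as an assertion rather than proving it, so your proposal matches the paper's proof in both structure and level of rigor.
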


We now verify (\ref{eq:entropy0}) for $\Rcal_{\rm polygon}(k,M)$. To this end, we note that any convex $k$-polygon can be identified with a minimum bounding circle as shown in Figure \ref{fig:poly}. Clearly if two polygons intersect, so do their minimum bounding circles. This immediately implies that (\ref{eq:entropy0}) holds, because we can always place $\lfloor m/r\rfloor^2$ mutually exclusive circles of radius $r$ over an $m\times m$ lattice.

\begin{figure}[htbp]
\begin{center}
\begin{tikzpicture}
 \begin{scope}
       \fill[white,fill opacity=0.8] (-1,-1) rectangle (5,5);
       \draw[step=0.5, black!30!white] (-1,-1) grid (5,5);

       \draw[line width=1pt,black!70!white,dashed] (2,2) circle (2.5);

       \path[draw,line width=2pt,black!70!white] ({2-sqrt(6)},2.5) -- (1,4)--(4,3.5)--(3.5,0.5)--(0.5,0)--cycle;
       \path[draw,line width=2pt,black!70!white,dashed] (2,2) -- (4,3.5);
       \path[draw,line width=2pt,black!70!white,dashed] (2,2) -- (3.5,0.5);
       \path[draw,line width=2pt,black!70!white,dashed] (2,2) -- (1,4);
       \path[draw,line width=2pt,black!70!white,dashed] (2,2) -- ({2-sqrt(6)},2.5);
       \path[draw,line width=2pt,black!70!white,dashed] (2,2) -- (0.5,0);
 \end{scope}
\end{tikzpicture}
\end{center}
\caption{Convex polygon and its minimum bounding circle.}
\label{fig:poly}
\end{figure}
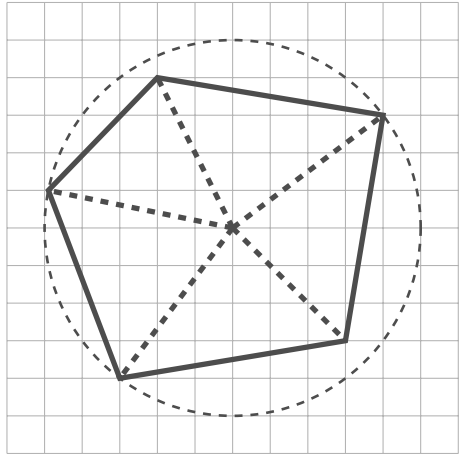

Finally, we show (\ref{eq:entropy}) also holds for $\Rcal_{\rm polygon}(k,M)$ by constructing an explicit covering set. The idea is fairly simple -- we apply a local perturbation to each vertex:
$$
\pi_s(K(\{(a_i,b_i): 1\le i\le k\}))=K(\{(2^{s}\lfloor2^{-s}a_{i}\rfloor,2^{s}\lfloor2^{-s}b_{i}\rfloor): 1\le i\le k\}).
$$
It can be shown that
\begin{proposition}
\label{pr:poly2}
Let $\pi_s$ be defined above. Then there exists an absolute constant $c>0$ such that
$$
\rho(K(\{(a_i,b_i): 1\le i\le k\}),\pi_s(K(\{(a_i,b_i): 1\le i\le k\})))\ge 1-c(\min_i r_i)^{-1}2^{s}.
$$
\end{proposition}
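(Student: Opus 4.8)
The plan is to convert the purely combinatorial quantity into a statement about the two polygons viewed as planar regions, estimate the relevant areas and perimeters by elementary Euclidean geometry, and then transfer back to lattice‑point counts. Writing $\rho(R_1,R_2)=|R_1\cap R_2|/\sqrt{|R_1|\,|R_2|}=1-d(R_1,R_2)$ for the overlap similarity, the assertion is exactly the covering bound $d(K,\pi_s(K))\le c\,(\min_i r_i)^{-1}2^s$ needed to feed the rounded polygons into an $\epsilon$‑net, so it suffices to control the symmetric difference of $K$ and $\pi_s(K)$ relative to their sizes. Throughout set $r_{\min}=\min_i r_i$ and $\delta=\sqrt2\,2^s$. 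The first, elementary step is a vertex displacement bound: since $0\le a_i-2^s\lfloor 2^{-s}a_i\rfloor<2^s$ and likewise for $b_i$, the map $\pi_s$ moves each vertex by at most $2^s$ in each coordinate, hence by at most $\delta$ in Euclidean norm.

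Next I would pass from vertices to whole regions. Because $K$ and $\pi_s(K)$ are the convex hulls of $\{v_i\}$ and $\{v_i'\}$ with $\|v_i-v_i'\|\le\delta$, the standard convex‑combination argument gives a Hausdorff bound: any $p=\sum_i\lambda_i v_i\in K$ is matched by $p'=\sum_i\lambda_i v_i'\in\pi_s(K)$ with $\|p-p'\|\le\sum_i\lambda_i\|v_i-v_i'\|\le\delta$, and symmetrically, so $K\subseteq(\pi_s(K))^{+\delta}$ and $\pi_s(K)\subseteq K^{+\delta}$, where $(\cdot)^{+\delta}$ denotes the outer parallel body. Feeding these inclusions into the planar Steiner formula $\mathrm{area}(A^{+\delta})=\mathrm{area}(A)+\delta\,\mathrm{per}(A)+\pi\delta^2$ confines the symmetric difference to a thin boundary tube,
\[
\mathrm{area}\big(K\,\triangle\,\pi_s(K)\big)\le \delta\big(\mathrm{per}(K)+\mathrm{per}(\pi_s(K))\big)+2\pi\delta^2 .
\]
It then remains to estimate the geometric quantities and to reconcile continuous area with lattice cardinality. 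I would bound each edge of $K$ by $r_i+r_{i+1}\le 2r_{\max}$, so $\mathrm{per}(K)\le 2k\,r_{\max}\le 2kM\,r_{\min}$ and the same for $\pi_s(K)$ (whose vertices lie within $\delta$ of those of $K$), and lower bound $\mathrm{area}(K)\gtrsim r_{\min}^2$; this yields $\mathrm{area}(K\triangle\pi_s(K))/\mathrm{area}(K)\lesssim \delta/r_{\min}$ whenever $\delta\lesssim r_{\min}$. Finally, for any convex lattice region the lattice‑point count satisfies $\big||R|-\mathrm{area}(R)\big|\le C\,\mathrm{per}(R)$, since only the $O(\mathrm{per}(R))$ unit cells crossing the boundary are miscounted; as $K$, $\pi_s(K)$ and $K\cap\pi_s(K)$ are all convex, substituting this into $\rho(K,\pi_s(K))=|K\cap\pi_s(K)|/\sqrt{|K|\,|\pi_s(K)|}$ and collecting terms (the lattice correction is itself $O(1/r_{\min})$, hence absorbed since $2^s\ge1$) gives $d(K,\pi_s(K))=O(2^s/r_{\min})$, as claimed.

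The step I expect to be the main obstacle is the geometric control of the isoperimetric ratio $\mathrm{per}(K)/\mathrm{area}(K)$, i.e. the area lower bound $\mathrm{area}(K)\gtrsim r_{\min}^2$: degenerate ``sliver'' polygons (for instance a thin rectangle whose four vertices all sit at distance $\approx r_{\min}$ from the centroid) have vanishing area while $r_{\min}$ stays fixed, so the radial regularity $\max_i r_i/\min_i r_i\le M$ alone does \emph{not} force fatness, and it is precisely the near‑regularity of the admissible class that must be used to guarantee $\mathrm{area}(K)\asymp r_{\min}^2$ and hence $\mathrm{per}(K)/\mathrm{area}(K)=\Theta(1/r_{\min})$. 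Consequently the constant $c$ in the bound should be understood to depend on $k$ and $M$ (consistent with the displayed $2^s/r_{\min}$ scaling), and the inequality is informative only in the nonvacuous regime $\delta\lesssim r_{\min}$, outside of which it holds trivially after enlarging $c$. A secondary bookkeeping point is verifying that the rounded vertices still determine a genuine convex $k$‑gon of comparable aspect ratio, so that the cover $\{\pi_s(K)\}$ indeed lies in $\Rcal_{\rm polygon}(k,M)$ as required by the covering‑number definition.
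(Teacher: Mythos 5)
Your argument is correct in outline and reaches the same bound as the paper, but by a genuinely different route. The paper controls the symmetric difference with a sequential vertex-replacement lemma (Lemma \ref{lem:setdif}): it interpolates between $K$ and $\pi_s(K)$ through polygons $Q_0,\ldots,Q_k$ that differ in one vertex at a time, bounds each $|Q_i\cap Q_{i+1}^c|$ by $\tfrac12 r(e_i+e_{i+1}+4r)$, and sums to get $|K\cap\pi_s(K)^c|\le r\sum_j(e_j+2r)$; it then bounds the total edge length by $k\max_i r_i$ and divides by an area lower bound. You instead pass through the Hausdorff estimate $d_H(K,\pi_s(K))\le\delta$ for convex hulls of $\delta$-close vertex sets and the Steiner formula, arriving at the same $\delta\cdot\mathrm{per}$ control of the symmetric difference. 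Your route is arguably cleaner for convex polygons (the paper's interpolating polygons $Q_i$ need not be convex, and its bound on the area affected by moving one vertex is stated rather tersely), and you additionally reconcile lattice cardinality with Euclidean area via the $O(\mathrm{per})$ boundary correction, a step the paper silently elides by treating $|K|$ as an area. The obstacle you single out --- that $\mathrm{area}(K)\gtrsim(\min_i r_i)^2$ does not follow from $\max_i r_i/\min_i r_i\le M$ alone, because of sliver polygons --- is real, and it is present in the paper's own proof as well: the final display there divides by $\pi r_1^2$ with no justification that $K$ has area of that order. So your diagnosis that the constant must depend on $k$ and $M$, that an implicit fatness assumption is being used (rather than $c$ being an ``absolute constant'' as stated), and that one should check $\pi_s(K)$ remains an admissible $k$-gon, is an accurate critique of the proposition rather than a defect of your proof relative to the paper's.
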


It is clear that, there exist constants $0<c_7<c_8$ depending on $k$ and $M$ only such that
$$
\Rcal_{\rm polygon}(A;k,M)\subset \left\{K\in\Rcal_{\rm polygon}(k,M): c_7A^{1/2}\le r_i\le c_8A^{1/2}, i=1,2,\ldots,k\right\}.
$$
Therefore, by taking $s=\log_2\left(\epsilon A^{1/2}\right)$, we get
$$
N(A,\epsilon)\le c_9\frac{n}{A}\left(\log\left(\frac{n}{A}\right)\right)^{k-1}\left(\frac{1}{\epsilon}\right)^{2k+2}
$$
In addition, this argument suggests a simple strategy by {\it digitalization} ($\pi_s$) to construct a covering set for $\Rcal$.

From this particular case, we can see the tremendous computational benefit of $\tilde{T}^\ast$ over $T^\ast$. To evaluate $T^\ast$, we need to compute the size-corrected likelihood ratio statistics for a total of $|\Rcal|=O(n^k)$ possible regions. In contrast, computing $\tilde{T}^\ast$ only involves $O\left(n{\rm polylog}(n)\right)$ regions as shown in the previous section. Here ${\rm polylog}(\cdot)$ stands for a certain polynomial of $\log(\cdot)$.

\subsection{Ellipses}
Next, we consider the case when $\Rcal$ is a collection of ellipses on a two-dimensional lattice. Recall that any ellipse can be indexed by its center $(\tau_1,\tau_2)^\top$, and a positive definite matrix $\Sigma\in \RR^{2\times 2}$:
$$
\Ecal((\tau_1,\tau_2)^\top, \Sigma)=\left\{(x_1,x_2)^\top\in \RR^2: \left(x_1-\tau_1,x_2-\tau_2\right)\Sigma^{-1}\left(\begin{array}{c}x_1-\tau_1\\ x_2-\tau_2\end{array}\right)\le 1\right\}.
$$
For brevity, we shall consider the case when $\Sigma$ is well conditioned in that its condition number, that is the ratio between its eigenvalues, is bounded to avoid lengthy discussion about the effect of discretization. In this case,
$$
\Rcal_{\rm ellipse}=\left\{\Ecal((\tau_1,\tau_2)^\top, \Sigma)\cap \II: 1\le \tau_1,\tau_2\le m, \Sigma\succ 0, \lambda_{\max}(\Sigma)/\lambda_{\min}(\Sigma)\le M\right\}.
$$
We first note that any ellipse can be identified with its circumscribing rectangle as shown in Figure \ref{fig:eclipse}.
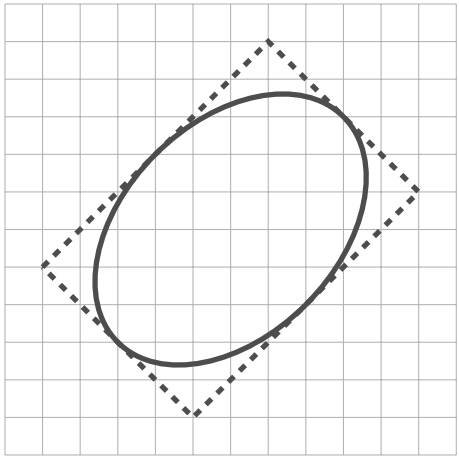
\begin{figure}[htbp]
\begin{center}
\begin{tikzpicture}
 \begin{scope}
       \fill[white,fill opacity=0.8] (-3,-3) rectangle (3,3);
       \draw[step=0.5, black!30!white] (-3,-3) grid (3,3);

       \draw[line width=2pt,black!70!white,dashed,rotate=-45] ({-sqrt(2)},{-1.5*sqrt(2)}) rectangle ({sqrt(2)},{1.5*sqrt(2)});
       \path[draw,line width=2pt,black!70!white,rotate=-45] (0,0) ellipse ({sqrt(2)} and {1.5*sqrt(2)});
 \end{scope}
\end{tikzpicture}
\end{center}
\caption{Circumscribing rectangle of an ellipse}
\label{fig:eclipse}
\end{figure}
Therefore, immediately following the bound on the number of rectangles on a lattice, for example by Proposition \ref{pr:poly1} with $k=4$, we get
$$
\Rcal_{\rm ellipse}\le cnA^4,
$$
for some constant $c>0$. Similarly, if two ellipses intersect, then so do their minimum bounding rectangles. By the argument for polygons, we therefore know that (\ref{eq:entropy0}) and (\ref{eq:entropy}) also hold for $\Rcal_{\rm ellipse}$.

\section{Optimality}
\label{sec:th}

We now study the power of the proposed test $T^\ast$ and its variant $\tilde{T}^\ast$. We shall first investigate the required strength of correlation so it can be detected using the proposed tests.

\begin{theorem}
\label{th:upper}
Assume that (\ref{eq:comp}) and (\ref{eq:entropy}) hold. If there exists a correlated region $R\in \Rcal$, with $|R|\to \infty$, such that (\ref{eq:modelh0}) holds for $i\notin R$ and (\ref{eq:model}) holds for $i\in R$, and
\begin{equation}
\label{eq:upper}
|R|\log\left(1\over 1-\rho^2\right)\ge (2+\delta_n) \log\left({n\over |R|}\right)
\end{equation}
for some $\delta_n>0$ such that $\delta_n\log^{1/2} (n/|R|)\to \infty$, then $T^\ast>q_\alpha$ and $\tilde{T}^\ast>\tilde{q}_\alpha$ with probability tending to one as $n\to\infty$.
\end{theorem}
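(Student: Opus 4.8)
The plan is to prove consistency by decoupling the two ingredients of the rejection event $\{T^\ast>q_\alpha\}$: I will show that under the alternative $T^\ast$ diverges to $+\infty$ in probability, while under the null the critical value $q_\alpha$ stays bounded as $n\to\infty$. A quantity tending to infinity eventually exceeds any bounded sequence with probability tending to one, so this gives $\PP(T^\ast>q_\alpha)\to1$; the identical template, with $\tilde{T}^\ast$ and $\tilde q_\alpha$ in place of $T^\ast$ and $q_\alpha$, will dispose of the computationally reduced statistic.

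For the divergence, I would retain in the maximum defining $T^\ast$ the single term indexed by the true correlated region $R\in\Rcal$, so that
$$T^\ast\ge {1\over\log\log(n/|R|)}\left[L_R-2\log\left({n\over|R|}\right)\right].$$
Writing $m=|R|\to\infty$ and $\gamma=\log(1/(1-\rho^2))$, the statistic $r_R$ is the sample correlation of $m$ i.i.d.\ bivariate normal pairs with correlation $\rho$, hence $r_R=\rho+O_p(m^{-1/2})$ with asymptotic variance of order $(1-\rho^2)^2/m$. A Taylor expansion of $r\mapsto-\log(1-r^2)$ about $\rho$ then yields
$$L_R=(m-2)\gamma+O_p\!\left(\sqrt{m\gamma}\right),$$
because the derivative $2\rho/(1-\rho^2)$ combines with the fluctuation of $r_R$ to a standard deviation of order $\rho\sqrt m\le\sqrt{m\gamma}$. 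Subtracting $2\log(n/m)$ and invoking the hypothesis (\ref{eq:upper}) gives a deterministic excess of at least $\delta_n\log(n/m)(1-o(1))$ against a stochastic term $O_p(\sqrt{m\gamma})$. The binding case is the detection boundary, where $m\gamma\asymp\log(n/m)$ and the signal-to-fluctuation ratio is of order $\delta_n\log^{1/2}(n/m)$; this diverges precisely by the assumption $\delta_n\log^{1/2}(n/|R|)\to\infty$, so after dividing by $\log\log(n/m)$ the right-hand side, and with it $T^\ast$, tends to $+\infty$ in probability.

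To control $q_\alpha$, I would use Theorem \ref{th:emp} scale by scale. Decomposing $T^\ast=\max_k T_k^\ast$ over the $\lfloor\log_2 n\rfloor$ dyadic bands $\Rcal_k$, each $\max_{R\in\Rcal(A)}L_R$ sits at $2\log(n/A)+O_p(\log\log(n/A))$ under $H_0$; feeding the tail bound produced by the generic chaining argument behind Theorem \ref{th:emp} into a union bound over the $O(\log n)$ bands shows that the $\log\log(n/A)$ normalization keeps $T^\ast$ stochastically bounded, so its $(1-\alpha)$-quantile $q_\alpha$ stays bounded. Since $\tilde{T}^\ast\le T^\ast$, the quantile $\tilde q_\alpha$ is bounded as well. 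For the power of $\tilde{T}^\ast$ I would locate, at the band containing $R$, a surrogate $\tilde R$ in the scanned cover with $d(R,\tilde R)\le 1/(4k^2)$ (or note that $R$ itself is scanned when it falls in a band with $k>k_\ast$). A small $d(R,\tilde R)$ forces $|R\cap\tilde R|$ to be within a $(1-o(1))$ factor of both $|R|$ and $|\tilde R|$, so on $\tilde R$ only a vanishing fraction of pixels are uncorrelated; the correlation on $\tilde R$ is therefore diluted negligibly and $L_{\tilde R}$ inherits the same divergent bound, giving $\PP(\tilde{T}^\ast>\tilde q_\alpha)\to1$.

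The hard part will be the sharp, uniform control of the fluctuation of $L_R$ about $(|R|-2)\gamma$: the estimate must hold simultaneously for fixed $\rho$ and for $\rho$ drifting to $0$ or $1$ with $n$, and the admissible size of the fluctuation is calibrated exactly to the margin $\delta_n\log(n/|R|)$. I would meet this with a variance-stabilizing (Fisher-transform) concentration bound for $r_R$ together with a Taylor remainder estimate valid uniformly in $\rho$, rather than a bare central limit theorem whose error is not uniform. A secondary difficulty is confirming that the dilution in passing from $R$ to its cover element $\tilde R$ does not push the signal below the boundary (\ref{eq:upper}); this is exactly where the quantitative link between $d(R,\tilde R)$ and the overlap $|R\cap\tilde R|$, and the $1/(4k^2)$ resolution built into $\tilde\Rcal_k$, are needed.
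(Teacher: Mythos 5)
Your plan reproduces the paper's argument essentially step for step: you bound $q_\alpha=O(1)$ by feeding the tail bound behind Theorem \ref{th:emp} into a union bound over the $O(\log n)$ dyadic scales, lower-bound $T^\ast$ by the single term at the true $R$ and show the deterministic excess $\delta_n\log(n/|R|)$ beats the $O_p\bigl(\sqrt{|R|\log(1/(1-\rho^2))}\bigr)$ fluctuation of $L_R$ precisely because $\delta_n\log^{1/2}(n/|R|)\to\infty$, and handle $\tilde T^\ast$ via $\tilde q_\alpha\le q_\alpha$ plus a surrogate $\tilde R$ in the $1/(4k^2)$-cover. The steps you defer are exactly the ones the paper's proof spends its effort on — a concentration bound for $L_R$ uniform in $\rho$ (the paper uses the conditional $\chi^2$ representation of $1-r_R^2$ for small $|R|$ and direct moment concentration for large $|R|$, in place of your Fisher-transform suggestion), and the bookkeeping showing that the $1/(4k^2)$ dilution, which for small $k$ is a constant rather than vanishing fraction, costs only an additive $O(1/k)$ against a threshold of order $k$ and requires a perturbation bound on $|L_{\tilde R}-L_{\tilde R\cap R}|$ when $\tilde R\not\subseteq R$ — but the route and the calibration are the same.
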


Theorem \ref{th:upper} shows that whenever correlation on a region $R$ satisfies (\ref{eq:upper}), our tests will consistently reject the null hypothesis and have power tending to one. The detection boundary of the proposed tests for a correlated region $R$ can therefore be characterized by (\ref{eq:upper}). More specifically, depending on the cardinality $|R|$, there are three different regimes. 
\begin{itemize}
\item[$\bullet$] For large regions where $|R|\asymp n$, correlation is detectable if $|R|\rho^2\to \infty$. Recall that, from Neyman-Pearson Lemma, even if the correlated region $R$ is known in advance, we can only consistently detect it under the same requirement. Put differently, the proposed method is as powerful as if we knew the region in advance.
\item[$\bullet$] For regions of intermediate sizes such that $\log n\ll |R|\ll n$, the detection boundary becomes $\rho^2\ge(2+\delta_n)|R|^{-1}\log(n/|R|)$, provided that $\delta_n\sqrt{\log(n/|R|)}\to \infty$. Here, we can see that weaker correlation can be detected over larger regions.
\item[$\bullet$] And finally for small regions where $|R|\ll \log(n)$, detection is only possible for nearly perfect correlation in that $\rho^2\ge 1-\exp(-(2+\delta_n)\log(n)/|R|)$ where $\delta_n\sqrt{\log n}\to \infty$.
\end{itemize}

It turns out that the detection boundary achieved by $T^\ast$ and $\tilde{T}^\ast$ as shown in Theorem \ref{th:upper} is indeed sharply optimal.
\begin{theorem}
\label{th:lower}
Assume that (\ref{eq:entropy0}) holds. For any $\alpha$-level test $\Delta$, there exists an instance where correlation occurs on some $R\in \Rcal$ obeying
\begin{equation}
\label{eq:lower}
|R|\log\left(1\over 1-\rho^2\right)\ge (2-\delta_n) \log\left({n\over |R|}\right)
\end{equation}
for a certain $\delta_n>0$ with $\delta_n\log^{1/2} (n/|R|)\to \infty$, such that the type II error of $\Delta$ converges to $1-\alpha$ as $n\to \infty$. Moreover, if there exists some $\alpha$-level test $\Delta$ for which the type II error converges to 0 as $n\to \infty$ on any instance where
correlation occurs on some $R\in \Rcal$ obeying
\begin{equation}
\label{eq:lower1}
|R|\log\left(1\over 1-\rho^2\right)\ge c_{n}\qquad\mbox{ and }\qquad|R|\rightarrow \infty,
\end{equation}
then it is necessary to have $c_{n}\rightarrow \infty$ as $n\rightarrow \infty$.
\end{theorem}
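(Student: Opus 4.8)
The plan is to prove the impossibility (first) part by Le Cam's method, reducing the composite problem to a single binary test between the null and a carefully weighted mixture over the alternative, and then to bound the relevant distance by a \emph{truncated} second moment argument; the naive second moment is off by exactly the factor of two we must capture, so the truncation is the crux.

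\textbf{Setup and reduction.} Because the law of the data is equivariant under location--scale changes in each channel and any level-$\alpha$ test must control type I error over all admissible nuisance values, I fix $\mu_1=\mu_2=0$ and $\sigma_1=\sigma_2=1$ and write $f_0$ for the resulting null law on $\{(X_i,Y_i)\}_{i\in\II}$. Fix a size $A$ with $A\to\infty$ and $A\ll n$ (so $\log(n/A)\to\infty$; the complementary regime $A\asymp n$ reduces to the classical two-point Neyman--Pearson bound and is not the hard case), and use (\ref{eq:entropy0}) to extract $m:=N(A,1)\ge c_3 n/A$ pairwise disjoint regions $R_1,\dots,R_m\in\Rcal(A)$ with $|R_j|$ all within a $(1+o(1))$ factor of $A$. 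Fix a common correlation $\rho$ with $A\log\frac{1}{1-\rho^2}=(2-\delta_n)\log(n/A)$, put (\ref{eq:model}) with this $\rho$ on a region drawn uniformly from $R_1,\dots,R_m$, and let $f_{R_j}$ denote the resulting alternative law and $\bar f=m^{-1}\sum_j f_{R_j}$ the mixture. By Le Cam's lemma the prior-averaged type II error of any level-$\alpha$ test is at least $1-\alpha-\mathrm{TV}(f_0,\bar f)$, so it suffices to show $\mathrm{TV}(f_0,\bar f)\to0$; averaging then yields an instance obeying (\ref{eq:lower}) whose type II error is $1-\alpha-o(1)$, which is the asserted convergence to $1-\alpha$.

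\textbf{The truncated second moment (main obstacle).} Writing $L_j=f_{R_j}/f_0$, disjointness kills the cross terms, $\EE_{f_0}[L_jL_k]=\EE_{f_0}[L_j]\EE_{f_0}[L_k]=1$ for $j\ne k$, so $\EE_{f_0}[(m^{-1}\sum_j L_j)^2]=1-m^{-1}+m^{-2}\sum_j\EE_{f_0}[L_j^2]$. A direct Gaussian integral gives the per-pixel identity $\EE_{f_0}[(f_\rho/f_0)^2]=(1-\rho^2)^{-1}$, hence $\EE_{f_0}[L_j^2]=(1-\rho^2)^{-|R_j|}=(n/A)^{2-\delta_n+o(1)}$ and $m^{-1}\EE_{f_0}[L_j^2]\asymp(n/A)^{1-\delta_n}$, which does not vanish: the vanilla second moment only certifies the suboptimal constant $1$. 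To recover the constant $2$ I truncate. Let $S_j=\log L_j=\sum_{i\in R_j}\ell_i$, a sum of i.i.d.\ quadratic forms in standard Gaussians with explicit moment generating function, and set $E_j=\{S_j\le b\}$ with $b$ placed $t$ standard deviations above the \emph{alternative} mean of $S_j$, where $t\to\infty$ but $t\ll\log^{1/2}(n/A)$. The identity $\EE_{f_0}[L_j\mathbf{1}_{E_j^c}]=\PP_{f_{R_j}}(S_j>b)\to0$ shows truncation removes negligible mass, while an exponential-tilting (Chernoff) estimate for $\EE_{f_0}[L_j^2\mathbf{1}_{E_j}]=\EE_{f_0}[e^{2S_j}\mathbf{1}_{\{S_j\le b\}}]$ shows the retained second moment is smaller than the untruncated one by a factor $\asymp(A/n)\log^{-1/2}(n/A)$, so that $m^{-1}\EE_{f_0}[L_j^2\mathbf{1}_{E_j}]\asymp(n/A)^{-\delta_n}\log^{-1/2}(n/A)\to0$. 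This last step is exactly where the hypothesis enters: $\delta_n\log^{1/2}(n/A)\to\infty$ forces $\delta_n\log(n/A)\to\infty$ and hence $(n/A)^{-\delta_n}\to0$. The truncated mixture ratio then has mean $\to1$ and variance $\to0$ under $f_0$, giving $\mathrm{TV}(f_0,\bar f)\to0$. I expect the genuine labor to be the exact-MGF/Chernoff control of the sub-exponential sums $S_j$ and the verification that a single level $b$ meets both the mass requirement and the second-moment requirement, the simultaneous balance of which is what pins down the condition $\delta_n\log^{1/2}(n/A)\to\infty$.

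\textbf{Necessity of $c_n\to\infty$.} I argue by contradiction. If $c_n\not\to\infty$, pass to a subsequence on which $c_n\le C$, and on it take $A\to\infty$ with $A\ll n$ and $\rho$ with $A\log\frac{1}{1-\rho^2}=c_n$, so that (\ref{eq:lower1}) holds. Now the per-region second moment is bounded, $\EE_{f_0}[L_j^2]=(1-\rho^2)^{-|R_j|}\le e^{C}=O(1)$, so the \emph{untruncated} second moment already suffices: the same disjoint-region mixture gives $\EE_{f_0}[(m^{-1}\sum_j L_j)^2]\le 1+m^{-1}e^{C}=1+O(A/n)\to1$, whence $\mathrm{TV}(f_0,\bar f)\to0$ and the prior-averaged type II error is at least $1-\alpha-o(1)$, bounded away from $0$. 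This contradicts the assumed consistency on every instance satisfying (\ref{eq:lower1}), so $c_n\to\infty$ is necessary.
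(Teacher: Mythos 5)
Your proposal is correct, and it shares the paper's skeleton — reduce to standard nuisance parameters, use (\ref{eq:entropy0}) to extract a maximal disjoint family $R_1,\dots,R_m$ with $m\asymp n/A$, form the uniform mixture, and bound the $L^1$/total-variation distance to the null so that $\PP_0(\Delta=1)+\PP_{\rm mix}(\Delta=0)\to 1$ — but it differs in the two technical devices. For the sharp constant $2$, you repair the second-moment method by truncating $S_j=\log L_j$ just above its mean under the alternative and controlling $\EE_0[L_j^2\mathbf{1}_{E_j}]=\EE_{f_{R_j}}[e^{S_j}\mathbf{1}_{E_j}]$ by tilting, with the discarded mass handled via $\EE_0[L_j\mathbf{1}_{E_j^c}]=\PP_{f_{R_j}}(S_j>b)$; the paper instead computes the closed-form fractional moment $\EE_0\bigl[W_R^{1+\delta_n/4}\bigr]=(1-\rho^2\delta_n^2/16)^{-A/2}(1-\rho^2)^{-A\delta_n/8}$, truncates $W_R$ at level $\eta m$, and bounds the tail term by Markov applied to that fractional moment. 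Both are legitimate refinements of the second-moment method; your tilting calculation requires somewhat more work (MGF/Chernoff control of the sub-exponential sums, and a careful balance of the single truncation level), whereas the paper's fractional moment is an explicit Gaussian integral whose exponent calculus directly exposes where $\delta_n^2\log(n/A)\to\infty$ is used — your balance condition $t\ll\delta_n\log^{1/2}(n/A)$ encodes the same hypothesis. For the second statement, your route is also different and arguably more elementary: with $c_n$ bounded the \emph{untruncated} per-region second moment $\EE_0[L_j^2]=(1-\rho^2)^{-A}=e^{c_n}$ is $O(1)$, so the plain mixture second moment already forces $\mathrm{TV}\to0$ once $m\to\infty$; the paper instead fixes a single region, invokes the CLT to get $\log W_R\to_d N(-b/2,b)$, and concludes by contiguity that the power cannot tend to one. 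Your version avoids the CLT but needs $A\ll n$ so that $m\to\infty$ (which you are free to arrange when constructing the counterexample); the paper's contiguity argument works for a single fixed region of any size with $|R|\to\infty$. Both arguments are sound, and the minor bookkeeping discrepancies in your exponents (e.g.\ $(n/A)^{-\delta_n}$ versus $(n/A)^{-\delta_n/2}e^{t\sigma}$) do not affect the conclusion.
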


In other words, Theorem \ref{th:lower} shows that any test is essentially powerless for detecting correlation with
$$
|R|\log\left(1\over 1-\rho^2\right)\le (2-\delta_n) \log\left({n\over |R|}\right)
$$
for any $\delta_n>0$ such that $\delta_n\log^{1/2} (n/|R|)\to \infty$. Together with Theorem \ref{th:upper}, we see that, when $n/|R|\to \infty$, the optimal detection boundary for colocalization for a general index set $\II$ and a large collection of $\Rcal$'s that satisfy certain complexity requirements is
$$
|R|\log\left(1\over 1-\rho^2\right)=2 \log\left({n\over |R|}\right);
$$
and the size-corrected scan statistic is sharply optimal.

The second statement of Theorem \ref{th:lower} deals with the case when $\lim\sup n/|R|$ is finite. Together with Theorem \ref{th:upper}, (\ref{eq:lower1}) implies that in this case, the correlated region can be detected if and only if
$$
\rho^{2}|R|\to \infty
$$
and size-corrected scan statistic is again optimal.

To better appreciate the effect of the size of a correlated region on its detectability, it is instructive to consider the cases where $|R|=n^\alpha$ for some $0<\alpha<1$ or $|R|=(\log n)^\alpha$ for some $\alpha>1$. In the former case when $|R|=n^\alpha$, the detection boundary is
$$
\rho^2= 2(1-\alpha)n^{-\alpha}\log n.
$$
In the latter case when $|R|=(\log n)^\alpha$, the detection boundary is
$$
\rho^2= 2(\log n)^{\alpha-1}.
$$
In both cases, it is clear that much weaker correlation can be detected on larger regions.

\section{Numerical Experiments}
\label{sec:num}
We now conduct numerical experiments to further demonstrate the practical merits of the proposed methodology.

\subsection{Simulation}
We begin with a series of four sets of simulation studies. To fix ideas, we focus on two dimensional lattices in our simulation studies. The first set of simulations was designed to show the flexibility of the general method by considering a variety of different shapes of correlated regions, namely the choice of the library $\Rcal$, including axis-aligned rectangles, triangles and axis-aligned ellipses. We compare the performance of size-corrected likelihood ratio statistic and the uncorrected likelihood ratio statistic to demonstrate the necessity and usefulness of the proposed correction. The second set was carried out to compare the full scan statistic $T^{*}$ and the nearly linear time scan $\tilde{T}^{*}$ and illustrate similar performance between the two methods yet considerable computation gain by using $\tilde{T}^{*}$. The third and fourth sets of simulation studies were conducted to confirm qualitatively our theoretical findings about the effect of the size $|\II|$ of the lattice and the area $A$ of correlated region on its detectability. In each case, we shall assume that only the shape of the correlated region is known and therefore $\Rcal$ is the collection of all regions of a particular shape. In addition, we simulate the null distribution and identify the upper $5\%$ quantile of the null distribution based on $1000$ Monte Carlo simulations. We reject the null hypothesis for a simulation run if the corresponding test statistic, $T^\ast$, $\tilde{T}^\ast$ or $L^\ast$, exceeds their respective upper quantile. This ensures that each test is at level $5\%$, up to Monte Carlo simulation error.

As argued in the previous sections, our methods can handle a variety of geometric shapes. We now demonstrate this versatility through simulation where we consider detecting a correlated region in the form of a triangle, an ellipse or a rectangle. In particular, we simulated data on a $32\times 32$ squared lattice. Correlation was imposed on a right triangle with side length $10$, $20$ and $10\sqrt{5}$, or an axis-aligned ellipse with short axis $4.94$ and long axis $6.36$, or a rectangle of size $10\times 10$. The location of these correlated regions was selected uniformly over the lattice. Typical simulated examples of different correlation and shape are given in Figure \ref{fig:simex}.
\begin{figure}
    \centering
    \begin{subfigure}[b]{0.3\textwidth}
    \centering
        \includegraphics[width=0.8\textwidth]{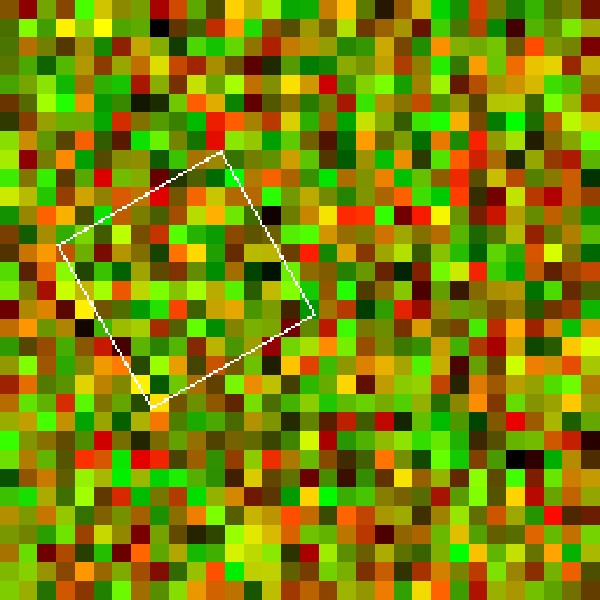}
        \caption{Rectangle with $\rho=0.2$.}
    \end{subfigure}
    \begin{subfigure}[b]{0.3\textwidth}
    \centering
        \includegraphics[width=0.8\textwidth]{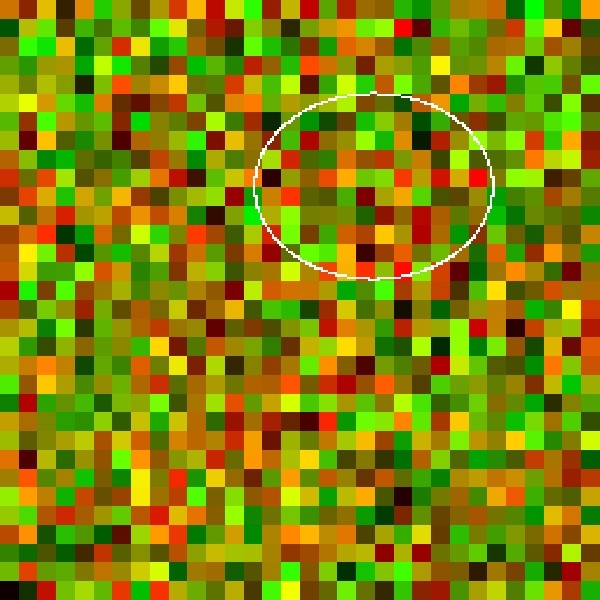}
        \caption{Ellipse with $\rho=0.2$.}
    \end{subfigure}
    \begin{subfigure}[b]{0.3\textwidth}
    \centering
        \includegraphics[width=0.8\textwidth]{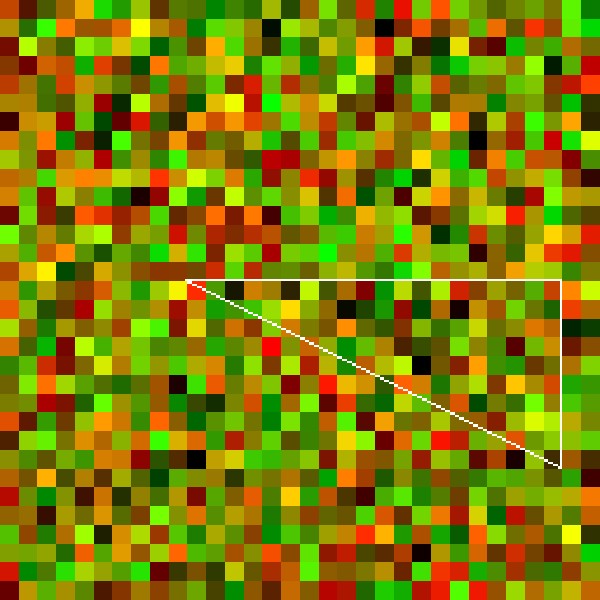}
        \caption{Triangle with $\rho=0.2$.}
    \end{subfigure}
    \caption{Simulated examples: overlayed images from green channel and red channel in a typical simulation run for different shapes of correlated regions.}\label{fig:simex}
\end{figure}

To assess the power of $T^\ast$, we considered two relatively small values of correlation coefficient $\rho$: $0.2$ and $0.4$. For comparison purposes, we computed for each simulation run both $T^\ast$ and the uncorrected maximum likelihood ratio statistic $L^\ast$. The experiment was repeated for $500$ times for each combination of shape and correlation coefficient. The results are summarized in Table \ref{tb:dfshape}. These results not only show the general applicability of our methods but also demonstrate the improved power of the size correction we apply.

\begin{table}[htbp]
\begin{center}
\begin{tabular}{c|cccccc}
\hline
\hline
Shape & \multicolumn{2}{c} {Rectangle} & \multicolumn{2}{c} {Ellipse} & \multicolumn{2}{c} {Triangle}\\
\hline
$\rho$& 0.2 & 0.4 &  0.2 & 0.4 &  0.2 & 0.4 \\
\hline
$T^\ast$ & 0.16 & 0.42& 0.25 & 0.6 & 0.21 & 0.58\\
$L^\ast$ & 0.04 & 0.20& 0.03& 0.51& 0.03& 0.26\\
\hline
\end{tabular}
\end{center}
\caption{Power comparison between $T^\ast$ and $L^\ast$ for different combinations of shape and correlation coefficient.}
\label{tb:dfshape}
\end{table}%

We now compare the full scan statistic $T^\ast$ with its more computationally efficient variant $\tilde{T}^\ast$. We focus on the case when the correlated region is known to be an axis-aligned rectangle. The true correlated region is a randomly selected $10\times 10$ rectangle on a $64\times 64$ lattice. We consider a variety of different correlation coefficients $0.2$, $0.4$, $0.6$, and $0.8$. The performance and computing time of both tests are reported in Table \ref{tb:dfmthd}, which is also based on 500 runs for each value of the correlation coefficient. It is clear from Table \ref{tb:dfmthd} that the two tests enjoy similar performance with $T^\ast$ slightly more powerful. Yet $\tilde{T}^\ast$ is much more efficient to evaluate as expected.

\begin{table}[htbp]
\begin{center}
\begin{tabular}{cc|cccc}
\hline
\hline
&Correlation Coefficient & 0.2 & 0.4 & 0.6 & 0.8\\
\hline
Power & $T^{*}$  & 0.108 & 0.228 & 0.502 & 0.708\\
& $\tilde{T}^{*}$ & 0.106 & 0.214 & 0.410 & 0.606\\
\hline
Time (ms)& $T^{*}$ & 444.084 & 447.236 & 452.634 & 453.064\\
&$\tilde{T}^{*}$ & 139.026 & 139.344 & 140.554 & 142.144\\
\hline
\end{tabular}
\end{center}
\caption{Comparison between $T^{*}$ and $\tilde{T}^{*}$.}
\label{tb:dfmthd}
\end{table}%

We note that the computing gain of $\tilde{T}^\ast$ over $T^\ast$ becomes more significant for larger images. In particular, we ran similar scans over lattices of size $256\times 256$, $256\times 512$ and $512\times 512$. The computing time for a typical dataset is presented in Table \ref{tb:dfmthd1}:

\begin{table}[htbp]
\begin{center}
\begin{tabular}{c|ccc}
\hline
\hline
Size of Lattice & $256\times 256$ & $256\times 512$ & $512\times 512$\\
\hline
\hline
Computing time of $T^{*}$ (s)& 129.942 & 487.238 & 1934.996\\
Computing time of $\tilde{T}^{*}$ (s)& 16.59 &  45.117 & 144.206\\
\hline
\end{tabular}
\end{center}
\caption{Comparison of computing times for $T^{*}$ and $\tilde{T}^{*}$.}
\label{tb:dfmthd1}
\end{table}%

We now evaluate the effect of the size of a correlated region on its detectability. In the light of the observations made in the previous set of experiments, we focus on using $\tilde{T}^\ast$ to detect a correlated rectangle on a $64\times 64$ lattice. We consider four different sizes of the correlated rectangle: $5\times5$, $10\times10$, $20\times20$ and $40\times 40$. For each given size of the correlated region, we varied the correlation coefficient to capture the relationship between the power of our detection scheme and $\rho$. The results summarized in Figure \ref{DSCPP} are again based on $500$ runs for each combination of size and correlation coefficient of the correlated region. The observed effect of $A$ on its detectability is consistent with the results established in Theorem \ref{th:upper} and Theorem \ref{th:lower}: larger regions are easier to detect with the same correlation coefficient.

\begin{figure}[htbp]
\begin{center}
\includegraphics[width=\textwidth]{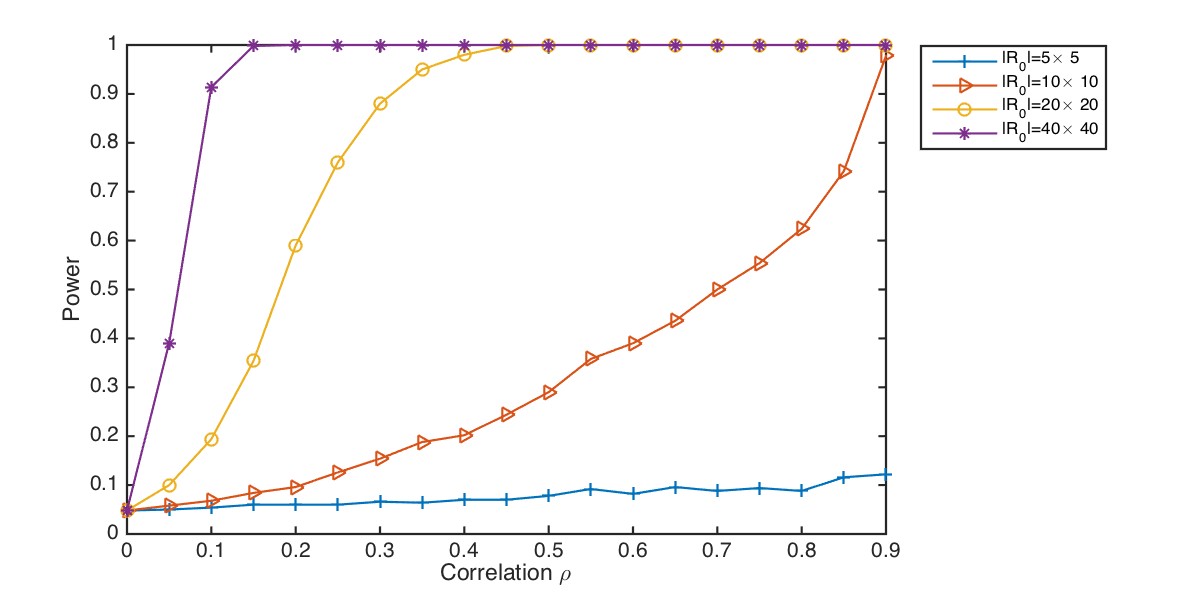}
\caption{Power plot for detecting a correlated rectangle of different sizes on a $64\times 64$ lattice.}
\label{DSCPP}
\end{center}
\end{figure}

Our final set of simulations is designed to assess the effect of $\II$. To this end we consider identifying a $10\times 10$ correlated rectangle on a squared lattice of size $32\times 32$, $64\times 64$, or $128\times 128$. As in the previous example, we repeat the experiment $500$ times for each combination of $\II$ and and a variety of values of $\rho$. The results are presented in Figure \ref{DICPP}. The observed effect of $|\II|$ is again consistent with our theoretical developments: as the size of lattice increases, detection becomes harder for a region of the same size and correlation.

\begin{figure}[htbp]
\begin{center}
\includegraphics[width=\textwidth]{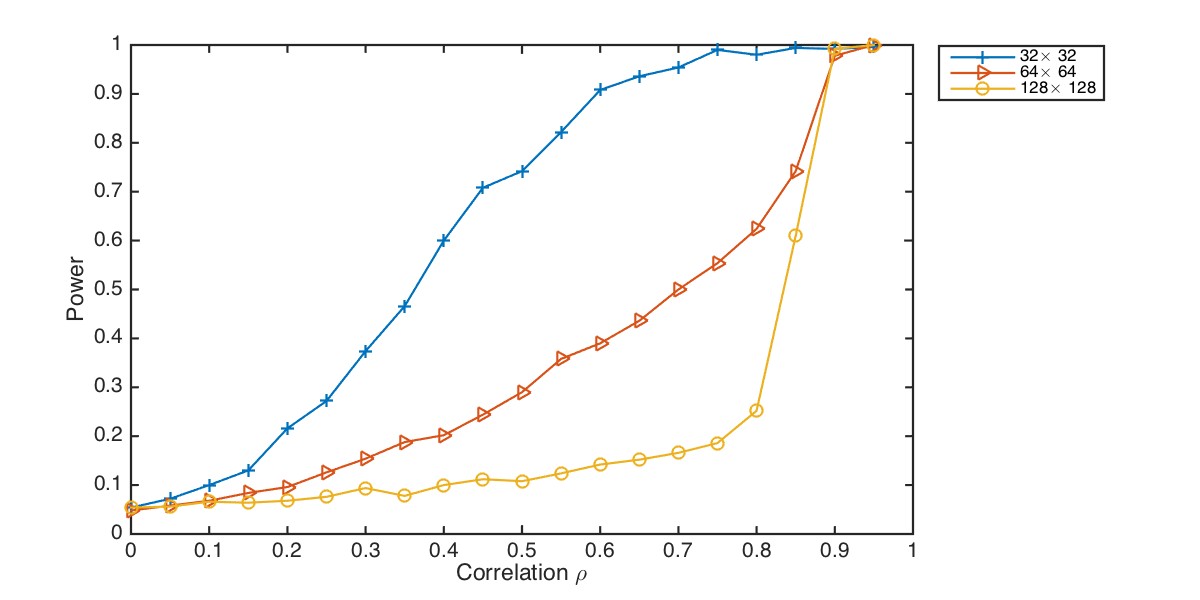}
\caption{Power plot for detecting a $10\times 10$ correlated rectangle on squared lattices of different sizes.}
\label{DICPP}
\end{center}
\end{figure}

\subsection{Real data example}

For illustration purposes, we now return to the image example we briefly mentioned in the introduction. This image originated from a concerted effort by several research groups to dissect the post-transcriptional process of human immunodeficiency virus type 1 (HIV-1) using imaging based approaches.

HIV uses the cell's mRNA nuclear export pathway to initiate the post-transcriptional stages of the viral life cycle. Nuclear export of a segment of the viral gRNA bearing the Rev Response Element (RRE) is essential to HIV gene regulation, viral replication and disease. It is well established that this process is regulated by a viral Rev trafficking protein that binds to the RRE and recruits the cellular CRM1 nuclear export receptor. In addition to nuclear export, Rev may also play a role in gRNA encapsidation and translation of gRNA. To gain insight into Rev's motility in the nucleus and cytoplasm to better understand Rev trafficking dynamics, as well as Rev's roles in viral gene expression and virus particle assembly live cell imaging was employed to monitor Rev and CRM1 behavior. It is expected that the REV/CRM1/RRE ribonuclear complex will have high colocalization in the cytoplasm during the viral life cycle. It has been shown that HIV-1 genomic RNAs (gRNAs) frequently exhibit ``burst" nuclear export kinetics. These events are characterized by striking en masse evacuations of gRNAs from the nucleus to flood the cytoplasm in conjunction with the onset of viral late gene expression. Burst nuclear export is regulated through interactions between the viral protein Rev, cellular nuclear export factor CRM1, and the gRNA's cis-acting RNA Rev response element (RRE). By monitoring mutant versions of the Rev protein unable to bind CRM1, export element deficient versions of the gRNA (RRE Minus), and lack of gRNA in the visual system, we can determine CRM1 trafficking behavior in the context of the virus. 

A specific data example is given in Figure \ref{fig:colocal} where dual-channel images of a wild type cell and a mutant cell are presented side-by-side. In each image, CRM1 is represented by green and the gRNA by red. While the ``burst'' nuclear export is visible for the wild type cell, it is less evident for the mutant cell. To further quantify such differences between the two cell types, we applied our method to this particular example following standard steps to preprocess the image: applying Otsu's method to each channel to remove background and then identifying spatial compartments where both channels are significantly expressed. On the post-processed images, we compute the test statistic $\tilde{T}^\ast$ and evaluate its corresponding p-value again by simulating the null distribution through 1000 Monte Carlo experiments. For the wild type cell, we obtained $\tilde{T}^\ast=5.65\times 10^3$ which is larger than any of the $1000$ values from the Monte Carlo simulations under the null hypothesis, suggesting a p-value$<0.1\%$, up to a Monte Carlo simulation error. On the other hand, the test statistic for the mutant cell is 8.98, which corresponds to a p-value of $0.846$.

\begin{figure}[ht!]
                \centering\includegraphics[width=8cm,height=5cm]{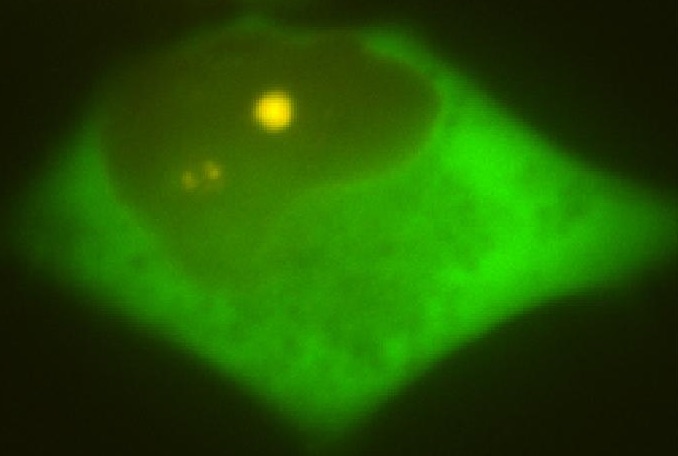}
                \centering\includegraphics[width=8cm,height=5cm]{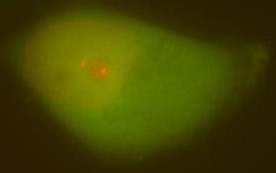}
                \caption{Colocalization between CRM1 and gRNA: comparison between the wild type (on the left) and mutant (on the right).}
                \label{fig:colocal}
\end{figure}

\section{Proofs of Main Results}

We now present the proofs to our main results, namely Theorems \ref{th:emp}, \ref{th:upper} and \ref{th:lower}. Proofs of Propositions \ref{pr:poly1} and \ref{pr:poly2}, as well as a number of auxiliary results, are relegated to the Appendix. To distinguish from the constants appeared in the previous sections, we shall use the capital letter $C$ to denote a generic positive constant that may take different values at each appearance.

\label{sec:proof}
\begin{proof} [Proof of Theorem \ref{th:emp}] We first prove the upper bound (\ref{eq:maxupper}) under conditions (\ref{eq:comp}) and (\ref{eq:entropy}). To this end, we shall establish a stronger result that there exists a constant $C>0$ such that for any $0<t<(\log n)^3$.
\begin{equation}
\label{eq:maxtail}
\PP\left\{\max_{R\in \Rcal(A)}L_R> 2\log n+C(\log \log n+t)\right\}\le \exp(-t).
\end{equation}
It is clear that (\ref{eq:maxupper}) follows immediately from (\ref{eq:maxtail}).

We now proceed to prove (\ref{eq:maxtail}). We shall consider the cases where $A\le (\log n)^5$ and $A\ge (\log n)^5$ separately. First consider the situation when $A\le (\log n)^5$. By Lemma \ref{lem:LR}, there exists a constant $C>0$ such that for any fixed $R\in \Rcal(A)$
$$
\PP\left\{L_R> x\right\}\le C\exp(-x/2).
$$
Applying union bound yields
\begin{eqnarray*}
\PP\left\{\max_{R\in \Rcal(A)}L_R> x\right\}&\le& C|\Rcal(A)|\exp(-x/2)\\
&\le& c_1CnA^{c_2}\exp(-x/2)\\
&\le& c_1Cn(\log n)^{5c_2}\exp(-x/2),
\end{eqnarray*}
where the second inequality follows from (\ref{eq:comp}). Equation (\ref{eq:maxtail}) then follows by taking
$$
x=2\log(c_1C)+2\log n+10c_2\log\log n+2t.
$$

The treatment for $A\ge (\log n)^5$ is more involved and we apply a chaining argument. Let $\Rcal_{\rm app}(A,e^{-s})$ be an $e^{-s}$ covering set of $\Rcal(A)$ so that
$$
|\Rcal_{\rm app}(A,e^{-s})|=N(A,e^{-s}).
$$
For any segment $R\in \Rcal(A)$, denote by
$$
\pi_s(R)=\argmin_{R'\in \Rcal_{\rm app}(A,e^{-s})}d(R,R').
$$
Of course, the minimizer on the right hand side may not be uniquely defined, in which case, we take $\pi_s(R)$ to be an arbitrarily chosen minimizer.

Write
$$
L_R=\sum_{s=s_\ast}^{s^\ast-1} \left(L_{\pi_{s+1}(R)}-L_{\pi_s(R)}\right)+(L_R-L_{\pi_{s^\ast}(R)})+L_{\pi_{s_\ast}(R)},
$$
where $s^\ast>s_\ast\ge \log \log(n/A)$ are to be specified later. It is clear that
\begin{equation}
\label{eq:chain}
\max_{R\in \Rcal(A)} L_R\le \sum_{s=s_\ast}^{s^\ast-1} \max_{R\in \Rcal(A)}\left|L_{\pi_{s+1}(R)}-L_{\pi_s(R)}\right|+\max_{R\in \Rcal(A)}\left|L_{R}-L_{\pi_{s^\ast}(R)}\right|+\max_{R\in \Rcal(A)}|L_{\pi_{s_\ast}(R)}|.
\end{equation}
We now bound the three terms on the right hand side of (\ref{eq:chain}) separately.

By definition,
$$
d(R, \pi_{s}(R))\le e^{-s},\qquad{\rm and}\qquad d(R, \pi_{s+1}(R))\le e^{-(s+1)}.
$$
Hence there exists a constant $C>0$ such that
$$
|\pi_{s}(R)\cap \pi_{s+1}(R)|\ge (1-C e^{-s})|R|,\qquad{\rm and}\qquad d(\pi_{s}(R),\pi_{s+1}(R))\le Ce^{-s}.
$$
Now by Lemma \ref{lem:dif}, for any fixed $R\in \Rcal(A)$,
$$
|L_{\pi_{s}(R)}-L_{\pi_{s+1}(R)}|\le C\left(e^{-s/2}x+|R|^{-1/2}x^{3/2}\right)
$$
with probability at least $1-Ce^{-x}$. An application of the union bound yields
\begin{eqnarray*}
&&\PP\left\{\max_{R\in \Rcal(A)}\left|L_{\pi_{s+1}(R)}-L_{\pi_s(R)}\right|> C\left(e^{-s/2}x+\sqrt{2}A^{-1/2}x^{3/2}\right)\right\}\\
&\le& CN(A,e^{-s})N(A,e^{-(s+1)})e^{-x}\\
&\le& C[N(A,e^{-(s+1)})]^2e^{-x}\\
&\le& c_4^2C\left({n\over A}\right)^2\left(\log{n\over A}\right)^{2c_5}e^{2c_6(s+1)}e^{-x},
\end{eqnarray*}
where the last inequality follows from (\ref{eq:entropy}). In particular, taking
$$x=t+2\log s+\log(c_4^2C)+2\log(n/A)+2c_5\log\log(n/A)+2c_6(s+1)$$
yields, with probability at least $1-s^{-2}e^{-t}$,
$$
\max_{R\in \Rcal(A)}\left|L_{\pi_{s+1}(R)}-L_{\pi_s(R)}\right|\le C\left((s+t+\log(n/A))e^{-s/2}+A^{-1/2}(s+t+\log(n/A))^{3/2}\right).
$$
Here we used the fact that $s\ge s_\ast\ge \log \log (n/A)$. Now applying the union bound over all $s_\ast\le s<s^\ast$, we get, with probability at least $1-s_\ast^{-1}e^{-t}\ge 1-e^{-t}$,
\begin{eqnarray*}
\sum_{s=s_\ast}^{s^\ast-1} \max_{R\in \Rcal(A)}\left|L_{\pi_{s+1}(R)}-L_{\pi_s(R)}\right|&\le& C\sum_{s=s_\ast}^{s^\ast-1}\left((s+t+\log(n/A))e^{-s/2}+A^{-1/2}(s+t+\log(n/A))^{3/2}\right)\\
&\le&C\left(s_\ast e^{-s_\ast/2}+A^{-1/2}(s^\ast)^{5/2}\right)\\
& &+C\left(e^{-s_\ast/2}(t+\log(n/A))+A^{-1/2}s^\ast (t+\log(n/A))^{3/2}\right).
\end{eqnarray*}

To bound the second term on the right hand side of (\ref{eq:chain}), we again apply Lemma \ref{lem:dif}. For any fixed $R\in \Rcal(A)$, we get
$$
\PP\left\{\left|L_R-L_{\pi_{s^\ast}(R)}\right|\ge C\left(e^{-s^\ast/2}x+\sqrt{2}A^{-1/2}x^{3/2}\right)\right\}\le Ce^{-x}.
$$
Another application of the union bound yields,
\begin{eqnarray*}
\max_{R\in \Rcal(A)}\left|L_R-L_{\pi_{s^\ast}(R)}\right|&\le& C\left(e^{-s^\ast/2}\log |\Rcal(A)|+A^{-1/2}(\log|\Rcal(A)|)^{3/2}+e^{-s^\ast/2}t+A^{-1/2}t^{3/2}\right)\\
&\le&c_2C\left(e^{-s^\ast/2}\log n+A^{-1/2}(\log n)^{3/2}+e^{-s^\ast/2}t+A^{-1/2}t^{3/2}\right),
\end{eqnarray*}
with probability at least $1-Ce^{-t}$, where we used (\ref{eq:comp}) in the last inequality.

Finally, for the third term on the right hand side of (\ref{eq:chain}), we have
$$
\PP\left\{\max_{R\in \Rcal_{\rm app}(A,e^{-s_\ast})} |L_R|\ge x\right\}\le CN(A,e^{-s_\ast})e^{-x/2}\le c_4C\left(n\over A\right)\left(\log{n\over A}\right)^{c_5}e^{c_6s_\ast}e^{-x/2}.
$$
Taking
$$
x=2\log(c_4C)+2\log{n\over A}+c_5\log\log{n\over A}+2c_6s_\ast+t
$$
yields, with probability at least $1-Ce^{-t}$,
$$
\max_{R\in \Rcal_{\rm app}(A,e^{-s_\ast})} |L_R|\le 2\log(c_4C)+2\log{n\over A}+c_5\log\log{n\over A}+2c_6s_\ast+t.
$$

In summary, we get, with probability at least $1-Ce^{-t}$,
\begin{eqnarray*}
\max_{R\in \Rcal(A)} L_R&\le& C\biggl(s_\ast e^{-s_\ast/2}+A^{-1/2}(s^\ast)^{5/2}+e^{-s_\ast/2}t+A^{-1/2}s^\ast t^{3/2}+e^{-s^\ast/2}\log n\\
&&+A^{-1/2}(\log n)^{3/2}+e^{-s^\ast/2}t+A^{-1/2}t^{3/2}+e^{-s_\ast/2}\log{n\over A}+A^{-1/2}s^\ast (\log({n/A}))^{3/2}\biggr)\\
&&+2\log(c_4C)+2\log{n\over A}+c_5\log\log{n\over A}+2c_6s_\ast+t.
\end{eqnarray*}
Recall that $A\ge (\log n)^5$. If we take $s^\ast=2\log n$ and $s_\ast=2\log\log (n/A)$, then for any $t\le (\log n)^3$, we can deduce from the above inequality that
\begin{equation}
\label{eq:longup}
\max_{R\in \Rcal(A)} L_R\le 2\log{n\over A}+C\left(\log\log{n\over A}+t\right),
\end{equation}
which implies (\ref{eq:maxtail}).

We now prove (\ref{eq:maxeq}) if in addition, (\ref{eq:entropy0}) holds. In the light of (\ref{eq:entropy}), we can find a subset $\tilde{\Rcal}(A)$ of $\Rcal(A)$ such that for any $R_1,R_2\in \tilde{\Rcal}(A)$, $R_1\cap R_2=\emptyset$ and
$$
|\tilde{\Rcal}(A)|\ge c_3{n\over A}.
$$
Obviously,
$$
\max_{R\in \Rcal(A)}L_R\ge \max_{R\in \tilde{\Rcal}(A)}L_R.
$$
If $A\le (\log n)^5$, then
\begin{eqnarray*}
\PP\left\{\max_{R\in \tilde{\Rcal}(A)}L_R\le x\right\}&=&\prod_{R\in \tilde{\Rcal}(A)}\PP\{L_R\le x\}\\
&\le& \prod_{R\in \tilde{\Rcal}(A)}\left(1-C|R|^{-1/2}e^{-x/2}\right)\\
&\le& \left[1-CA^{-1/2}e^{-x/2}\right]^{c_3n/A}\\
&\le& \left[1-C(\log n)^{-5/2}e^{-x/2}\right]^{c_3n/A},
\end{eqnarray*}
where the first inequality follows from the lower bound given by Lemma \ref{lem:LR}. It can then be derived that
\begin{equation}
\label{eq:shortlow}
\max_{R\in \tilde{\Rcal}(A)}L_R\ge 2\log n+O_p(\log \log n).
\end{equation}
Together with (\ref{eq:maxupper}), (\ref{eq:shortlow}) implies the desired claim when $A\le (\log n)^5$.

Next we consider the case when $A\ge (\log n)^5$. We proceed in a similar fashion as before but rely on the following tail bound of $L_R$: if $A\ge 24$, then there exists a constant $C>0$ such that for any $R\in \Rcal(A)$ and $0<x<\sqrt{A}$,
\begin{equation}
\label{eq:tailev}
\PP\{L_R\ge x\}\le Cx^{-1/2}\exp(-x/2).
\end{equation}
If (\ref{eq:tailev}) holds, then
$$
\PP\left\{\max_{R\in \tilde{\Rcal}(A)}L_R\le x\right\}\ge \left(1-Cx^{-1/2}e^{-x/2}\right)^{c_3n/A},
$$
which yields
$$
\max_{R\in {\Rcal}(A)}L_R\ge \max_{R\in \tilde{\Rcal}(A)}L_R\ge 2\log(n/A)+O_p(\log\log(n/A)).
$$
Together with (\ref{eq:maxupper}), this concludes the proof.

It now remains to prove (\ref{eq:tailev}). Write
$$T_R=(|R|-2)r_R^2.$$
Note that $\log(1+x)>x-x^2/2$ for any $x>0$. We get
$$
L_R\ge(|R|-2)\log(1+r_R^2)\ge T_R^2-{T_R^4\over 2(|R|-2)}\ge T_R^2-{T_R^4\over A-4},
$$
for any $A\ge 5$, where in the last inequality we used the fact that $|R|>A/2$ for any $R\in \Rcal(A)$. This can be further lower bounded by $T_R^2-{3T_R^4/A}$ for any $A\ge 6$. Thus, for any $0<x<A/24$,
\begin{eqnarray*}
\PP\{L_R\ge x\}&\ge& \PP\left\{T_R^2-{3T_R^4\over A}\ge x\right\}\\
&\ge& \PP\left\{T_R^2-{3T_R^4\over A}\in [x,2x)\right\}\\
&\ge& \PP\left\{T_R^2\in [x+12x^2/A,2x+3x^2/A)\right\}\\
&\ge& \PP\left\{T_R\in [\sqrt{x+12x^2/A},\sqrt{2x+3x^2/A})\right\}.
\end{eqnarray*}
Because $T_R\sim t_{|R|-2}$, we have
\begin{eqnarray*}
&&\PP\left\{T_R\in [\sqrt{x+12x^2/A},\sqrt{2x+3x^2/A})\right\}\\
&\ge& C\int_{\sqrt{x+12x^2/A}}^{\sqrt{2x+3x^2/A}}\left(1+\frac{u^{2}}{|R|-2}\right)^{-\frac{|R|-1}{2}}du\\
&\ge& C\int_{\sqrt{x+12x^2/A}}^{\sqrt{2x+3x^2/A}}\exp\left[{-\frac{|R|-1}{2}}\log\left(1+\frac{u^{2}}{|R|-2}\right)\right]du\\
&\ge&C\int_{\sqrt{x+12x^2/A}}^{\sqrt{2x+3x^2/A}}\exp\left({-\frac{|R|-1}{2(|R|-2)}}u^{2}\right)du,
\end{eqnarray*}
for some constant $C>0$, where in the last inequality we used the fact that $\log(1+x)\le x$ for all $x\ge 0$. Thus,
\begin{eqnarray*}
&&\PP\left\{T_R\in [\sqrt{x+12x^2/A},\sqrt{2x+3x^2/A})\right\}\\
&\ge& C(2x+3x^2/A)^{-1/2}\int_{\sqrt{x+12x^2/A}}^{\sqrt{2x+3x^2/A}}u\exp\left({-\frac{|R|-1}{2(|R|-2)}}u^{2}\right)du\\
&=&C(2x+3x^2/A)^{-1/2}(1-(|R|-1)^{-1})\biggl[\exp\left({-\frac{|R|-1}{2(|R|-2)}}(x+12x^2/A)\right)\\
&&\hskip 100pt -\exp\left({-\frac{|R|-1}{2(|R|-2)}}(2x+3x^2/A)\right)\biggr].
\end{eqnarray*}
Recall that $0<x<A/24$. We get
\begin{eqnarray*}
&&\PP\left\{T_R\in [\sqrt{x+12x^2/A},\sqrt{2x+3x^2/A})\right\}\\
&\ge& C x^{-1/2}\exp\left({-\frac{|R|-1}{2(|R|-2)}}(x+12x^2/A)\right)\\
&\ge& C x^{-1/2}\exp\left({-\frac{A-1}{2(A-2)}}(x+12x^2/A)\right)\\
&\ge& C x^{-1/2}\exp(-x/2),
\end{eqnarray*}
where in the last inequality, we used the fact that $x\le \sqrt{A}$. The proof is then completed.
\end{proof}
\vskip 25pt

\begin{proof}[Proof of Theorem \ref{th:upper} (Consistency of $T^\ast$)]
We first show that the claim is true for $T^\ast$. To this end, we begin by arguing that $q_\alpha=O(1)$, and then show that under $H_1$, ${T}^\ast\to \infty$. Note that
\begin{eqnarray*}
T^\ast&=&\max_{R\in \Rcal} \left\{{1\over \log\log(n/|R|)}\left[L_R-2\log\left({n\over |R|}\right)\right]\right\}\\
&=&\max_{1\le k\le \log n}\max_{R\in \Rcal(e^{-k+1}n)} \left\{{1\over \log\log(n/|R|)}\left[L_R-2\log\left({n\over |R|}\right)\right]\right\}.
\end{eqnarray*}
As shown in the proof of Theorem \ref{th:emp}, there exists a constant $C>0$ such that for any $0<t<(\log n)^3$,
$$
\PP\left\{\max_{R\in \Rcal(e^{-k+1}n)} L_R\ge 2k+C\left(\log k+t\right)\right\}\le \exp(-t).
$$
Taking $t=x+\log (2k^2)$ yields
$$
\PP\left\{\max_{R\in \Rcal(e^{-k+1}n)} \left\{{1\over \log\log(n/|R|)}\left[L_R-2\log\left({n\over |R|}\right)\right]\right\}\ge C(x+1)\right\}\le {1\over 2k^2}\exp(-x).
$$
Applying union bound over all $k$, we get
$$
\PP\left\{T^\ast\ge C(x+1)\right\}\le \sum_{1\le k\le \log n} {1\over 2k^2}\exp(-x)\le \exp(-x),
$$
which implies that $q_\alpha\le C(1-\log(1-\alpha))$.

It now suffices to show that if (\ref{eq:upper}) holds for some $R\in \Rcal$, then $T^\ast\to \infty$. To this end, note that
$$
T^\ast\ge {1\over \log\log(n/|R|)}\left[L_R-2\log\left({n\over |R|}\right)\right].
$$
We treat the case $|R|\ge \log n$ and $|R|\le \log n$ separately.

Consider first the situation when $|R|\le \log n$. By Lemma \ref{lem:LRH1},
$$
\left({1-r_R^2\over 1-\rho^2}\right)\sum_{i\in R}(Y_i-\bar{Y}_R )^2\sim \chi^2_{|R|-2}.
$$
Applying the $\chi^2$ tail bounds of \cite{ChiSqu}, we get, with probability at least $1-2e^{-x}$,
$$
\left({1-r_R^2\over 1-\rho^2}\right)\sum_{i\in R}(Y_i-\bar{Y}_R )^2\le (|R|-2)+2\sqrt{x(|R|-2)}+2x
$$
and
$$
\sum_{i\in R}(Y_i-\bar{Y}_R )^2\ge (|R|-1)-2\sqrt{x(|R|-1)}.
$$
Under this event,
$$
{1-r_R^2\over 1-\rho^2}\le {(|R|-2)+2\sqrt{x(|R|-2)}+2x\over (|R|-1)-2\sqrt{x(|R|-1)}}.
$$
Assuming that $x=o(|R|)$, this can be further simplified as
$$
{1-r_R^2\over 1-\rho^2}\le 1+o\left(\sqrt{x\over |R|}\right).
$$
If in addition, $x\to\infty$, then
\begin{eqnarray*}
-(|R|-2)\log{(1-r_{R}^{2})}&\ge& -|R|\log{(1-\rho^{2})}+o\left(\sqrt{x|R|}\right)\\
&\ge& 2\log(n/|R|)+\delta_n\log(n/|R|)+o\left(\sqrt{x|R|}\right),
\end{eqnarray*}
which diverges with $n$ because
$$
\delta_n\log(n/|R|)\gg \sqrt{\log(n/|R|)}\gg |R|\gg \sqrt{x|R|}.
$$
Since
$$
\delta_n\log(n/|R|)\gg \sqrt{\log(n/|R|)}\gg \log\log (n/|R|),
$$
this immediately suggests that
$$
T^\ast\ge {1\over \log\log(n/|R|)}\left[L_R-2\log\left({n\over |R|}\right)\right]\to_p\infty.
$$

Next consider the case when $|R|\ge \log n$. Assume without loss of generality that $\rho>0$. The treatment for $\rho<0$ is identical. Following an argument similar to that for Lemma \ref{lem:r}, we get
$$
\sum_{i\in R}(X_{i}-\bar{X}_R )^2, \sum_{i\in R}(Y_{i}-\bar{Y}_R )^2\le (|R|-1)+2\sqrt{x(|R|-1)}+2x
$$
and
$$
\sum_{i\in R}(X_i-\bar{X}_R )(Y_i-\bar{Y}_R )\ge (|R|-1)\rho-2\sqrt{x(|R|-1)}-2x.
$$
with probability at least $1-6e^{-x}$. Denote this event by $\Ecal(x)$. We shall now proceed under $\Ecal(x)$ with an appropriately chosen $x\to\infty$. 
\begin{equation}
\label{eq:bdrrbyrho}
r_R\ge {\rho-2\sqrt{x/(|R|-1)}-2[x/(|R|-1)]\over 1+2\sqrt{x/(|R|-1)}+2[x/(|R|-1)]}.
\end{equation}
It is not hard to see that under the condition (\ref{eq:upper}), $|R|\rho^2\to \infty$. Assuming that $x\to\infty$ such that $x=o(|R|\rho^2)$, we get
$$
r_R\ge \rho+o\left(\sqrt{x\over |R|}\right)
$$
Then,
\begin{equation}
\label{eq:rhobd}
L_R\ge -(|R|-2)\log\left[1-\left(\rho+o\left(\sqrt{x\over |R|}\right)\right)^2\right]
\end{equation}
Recall that
$$
-|R|\log(1-\rho^2)\ge (2+\delta_n)\log\left({n\over |R|}\right).
$$
Denote by $\rho_\ast>0$ the solution to
$$
-|R|\log(1-y^2)=(2+\delta_n)\log\left({n\over |R|}\right).
$$
It is clear that $\rho\ge \rho_\ast$. Together with the fact that the right hand side of (\ref{eq:rhobd}) is an increasing function of $\rho$, we get
\begin{eqnarray*}
L_R&\ge& -(|R|-2)\log\left[1-\left(\rho_\ast+o\left(\sqrt{x\over |R|}\right)\right)^2\right]\\
&=&-(|R|-2)\log(1-\rho_\ast^2)+o(\sqrt{x|R|}\rho_\ast)\\
&=&(2+\delta_n)\log\left({n\over |R|}\right)+o(\sqrt{x|R|\rho_\ast^2}).
\end{eqnarray*}
Note that
$$
|R|\rho_\ast^2\le 2|R|\log(1+\rho^2_\ast)\le -2|R|\log(1-\rho^2_\ast)= 2(2+\delta_n)\log\left({n\over |R|}\right).
$$
It is not hard to see that
$$
{\delta_n^2\over 2+\delta_n}\log\left({n\over |R|}\right)\to \infty
$$
if (\ref{eq:upper}) holds. Assuming that
$$
x=o\left({\delta_n^2\over 2+\delta_n}\log\left({n\over |R|}\right)\right),
$$
we get
$$
T^\ast\ge (\log\log(n/|R|))^{-1}(L_R-2\log(n/|R|))\to \infty.
$$
This concludes the proof of consistency of $T^\ast$ under (\ref{eq:upper}).
\end{proof}
\vskip 25pt

\begin{proof}[Proof of Theorem \ref{th:upper} (Consistency of $\tilde{T}^\ast$)] We now consider the computationally efficient test based on $\tilde{T}^\ast$ is also consistent. As before, we begin by arguing that $\tilde{q}_\alpha=O(1)$, and then show that under $H_1$, $\tilde{T}^\ast\to \infty$. To show that $\tilde{q}_\alpha=O(1)$, it suffices to note that
\begin{eqnarray*}
T^\ast&=&\max_{R\in \Rcal} \left\{(\log\log(n/|R|))^{-1}\left(L_R-2\log(n/|R|)\right)\right\}\\
&\ge&\max_{R\in \cup_k\tilde{\Rcal}_k} \left\{(\log\log(n/|R|))^{-1}\left(L_R-2\log(n/|R|)\right)\right\}\\
&=&\tilde{T}^\ast.
\end{eqnarray*}
Therefore, $\tilde{q}_\alpha\le q_\alpha=O(1)$ following the argument before.

Next we show that under the alternative hypothesis where $X_i$ and $Y_i$ are correlated on a set $R\in \Rcal_k$ for some $k$, $\tilde{T}^\ast\to \infty$. By definition, there exists a $\tilde{R}\in \tilde{\Rcal}_k$ such that
\begin{equation}
\label{eq:RRtilde}
d(R,\tilde{R})\le {1\over 4k^2}.
\end{equation}
Observe that
$$
\tilde{T}^\ast\ge \tilde{T}^\ast_k\ge (\log\log(n/|\tilde{R}|))^{-1}\left(L_{\tilde{R}}-2\log(n/|\tilde{R}|)\right).
$$
It now suffices to show that the rightmost hand side is unbounded with probability approaching to 1 when $k\le k_\ast$. To this end, we first consider the case when $\tilde{R}\subseteq R$.

Note that if $\tilde{R}\subseteq R$, then (\ref{eq:model}) holds for any $i\in \tilde{R}$. Following an identical argument for consistency of $T^\ast$, it suffices to show that there exists a $\tilde{\delta}_n>0$ such that
\begin{equation}
\label{eq:condtild1}
\tilde{\delta}_n\sqrt{\log(n/|\tilde{R}|)}\to \infty
\end{equation}
and
\begin{equation}
\label{eq:condtild2}
-|\tilde{R}|\log(1-\rho^2)\ge (2+\tilde{\delta}_n)\log\left({n\over |\tilde{R}|}\right).
\end{equation}
Observe that (\ref{eq:RRtilde}) implies that
$$
|\tilde{R}|\ge \left(1-{1\over 4k^2}\right)|R|.
$$
Thus
\begin{eqnarray*}
|\tilde{R}|\log{1\over 1-\rho^2}&\ge& \left(1-{1\over 4k^2}\right)|R|\log{1\over 1-\rho^2}\\
&\ge& \left(1-{1\over 4k^2}\right)(2+\delta_n)\log\left({n\over |R|}\right).
\end{eqnarray*}
Because
$$
\log\left({n\over |R|}\right)=\log\left({n\over |\tilde{R}|}\right)+\log\left({|\tilde{R}|\over |R|}\right)\ge \log\left({n\over |\tilde{R}|}\right)+\log\left(1-{1\over 4k^2}\right)\ge \log\left({n\over |\tilde{R}|}\right)-{1\over 4k^2},
$$
we get
\begin{eqnarray*}
|\tilde{R}|\log{1\over 1-\rho^2}&\ge& \left(1-{1\over 4k^2}\right)(2+\delta_n)\left[\log\left({n\over |\tilde{R}|}\right)-{1\over 4k^2}\right]\\
&\ge& \left(1-{1\over 4k^2}\right)^2(2+\delta_n)\log\left({n\over |\tilde{R}|}\right)\\
&\ge&\left(1-{1\over 2k^2}\right)(2+\delta_n)\log\left({n\over |\tilde{R}|}\right).
\end{eqnarray*}
Let
$$
\tilde{\delta}_n=\left(1-{1\over 2k^2}\right)\delta_n-{1\over k^2}.
$$
Then (\ref{eq:condtild2}) holds. We now verify (\ref{eq:condtild1}). Recall that
$$
\delta_n^2(k-1)\log 2\le \delta_n^2\log\left({n\over |R|}\right)\to \infty,
$$
we get, for sufficiently large $n$,
$$
\tilde{\delta}_n\ge {1\over 4}\delta_n.
$$
This implies that
$$
\tilde{\delta}_n^2\log\left({n\over |\tilde{R}|}\right)\ge \tilde{\delta}_n^2\log\left({n\over |R|}\right)\ge {1\over 16}\delta_n^2\log\left({n\over |R|}\right)\to \infty,
$$
which completes the proof for the case $\tilde{R}\subseteq R$.

Now consider the case when $\tilde{R}\not\subseteq R$. By definition,
$$
{|\tilde{R}\cap R|\over \sqrt{|R||\tilde{R}|}}\ge 1-{1\over 4k^2}.
$$
Because $\tilde{R}\cap R\subseteq \tilde{R}$, we get
\begin{equation}
\label{eq:capRsize1}
{|\tilde{R}|\over |R|}\ge \left(1-{1\over 4k^2}\right)^2.
\end{equation}
Thus,
$$
|\tilde{R}\cap R|\ge \left(1-{1\over 4k^2}\right)^{3/2}|R|\ge \left(1-{1\over 3k^2}\right)|R|.
$$
Similarly, we can derive that
\begin{equation}
\label{eq:capRsize}
|\tilde{R}\cap R|\ge \left(1-{1\over 3k^2}\right)|\tilde{R}|.
\end{equation}
Following the same treatment as for the previous case, we can derive that
$$
{1\over \log\log (n/ |\tilde{R}\cap R|)}\left[L_{\tilde{R}\cap R}-2\log\left({n\over |\tilde{R}\cap R|}\right)\right]\to_p \infty.
$$
Since $|\tilde{R}\cap R|\le |\tilde{R}|$,
$$
{1\over \log\log {n\over |\tilde{R}|}}\left[L_{\tilde{R}\cap R}-2\log\left({n\over |\tilde{R}|}\right)\right]\ge {1\over \log\log {n\over |\tilde{R}\cap R|}}\left[L_{\tilde{R}\cap R}-2\log\left({n\over |\tilde{R}\cap R|}\right)\right]\to\infty.
$$
It now suffices to show that
\begin{equation}
\label{eq:finalTtilde}
|L_{\tilde{R}\cap R}-L_{\tilde{R}}|=O_p\left(\log\log \left(n\over |\tilde{R}|\right)\right)
\end{equation}
In the light of (\ref{eq:capRsize1}),
\begin{eqnarray*}
\log\log \left(n\over |\tilde{R}|\right)&\ge& \log\left[\log \left(n\over |R|\right) +2\log\left(1-{1\over 4k^2}\right)\right]\\
&\ge& \log\left[(k-1)\log 2-{1\over 2k^2}\right]=O(\log k).
\end{eqnarray*}
On the other hand, by Lemma \ref{lem:dif1},
$$
|L_{\tilde{R}\cap R}-L_{\tilde{R}}|\le C\left[{1\over 3k^2}x+|\tilde{R}|^{-1/2}x^{3/2}\right],
$$
with probability at least $1-e^{-x}$. Observe that
$$
|\tilde{R}|\ge \left(1-{1\over 4k^2}\right)^2|R|\ge \left(1-{1\over 2k^2}\right)|R|\ge n2^{-(k+1)}.
$$
Equation (\ref{eq:finalTtilde}) then follows by taking
$$
x=\min\left\{k^2\log k,2^{-k/3}n^{1/3}(\log k)^{2/3}\right\}.
$$
The proof is now completed.
\end{proof}
\vskip 25pt

\begin{proof}[Proof of Theorem \ref{th:lower}]
Our argument is similar to those used earlier by \cite{lepski2000asymptotically} and \cite{walther2010optimal}. We shall outline only the main steps for brevity. Note first that a lower bound for a special case necessarily yields a lower bound for the general case. Thus it suffices to consider the case when $\mu_{1}=\mu_{2}=0$ and $\sigma_{1}=\sigma_{2}=1$. In the light of (\ref{eq:entropy0}), for any $A$, we can find $\tilde{\Rcal}(A)\subset \Rcal(A)$ such that $|\tilde{\Rcal}(A)|=c_3(n/A)$, and for any $R_1, R_2\in \tilde{\Rcal}(A)$, $R_1\cap R_2=\emptyset$. For brevity, we shall assume that $c_3=1$ and for any $R\in \tilde{\Rcal}(A)$, $|R|=A$. More general case can be treated in the same fashion albeit the argument becomes considerably more cumbersome.

Denote by $\PP_{0}$ the joint distribution of $\{(X_i,Y_i): i\in \II\}$ under null hypothesis, and by $\PP_{R}$ the joint distribution under alternative hypothesis where $X_i$ and $Y_i$ are correlated on $R\in \tilde{\Rcal}(A)$ so that (\ref{eq:model}) holds for $i\in R$ and (\ref{eq:modelh0}) holds for $i\notin R$. The likelihood ratio between $\PP_0$ and $\PP_R$ can be computed:
$$
W_R=\frac{d\PP_R}{d\PP_0}=\frac{1}{(1-\rho^{2})^{A/2}}\exp\left\{-\frac{\sum_{i\in R}\left(\rho^{2}X_i^{2}-2\rho X_{i}Y_{i}+\rho^{2}Y_{i}^{2}\right)}{2(1-\rho^{2})}\right\}
$$
To prove the first statement, we first show
$$\EE_{{0}}(W_R^{1+\delta_n/4})/(\eta |\tilde{\Rcal}(A)|)^{\delta_n/4}\rightarrow 0\qquad\mbox{ for any } 0<\eta<1,$$
where $\EE_0$ stands for expectation taken with respect to $\PP_0$.

It can be computed that
$$\EE_{{0}}(W_R^{1+\delta_n/4})=\frac{1}{(1-\rho^{2}\delta_n^{2}/16)^{A/2}(1-\rho^{2})^{A\delta_n/8}}.$$
Recall that
$$A\log{1\over 1-\rho^{2}}\le (2-\delta_n)\log{n\over A}.$$
We get
\begin{align*}
&-\log\left[\EE_{0}(W_R^{1+\delta_n/4})/(\eta |\tilde{\Rcal}(A)|)^{\delta_n/4}\right]\\
\ge & \frac{A\delta_n}{8}\log(1-\rho^{2})+\frac{A}{2}\log(1-\rho^{2}\delta_n^{2}/16)+{\delta_n\over 4}\log{n\over A}+{\delta_n\over 4}\log{\eta}\\
\ge & \frac{\delta_n^2}{8}\log{\frac{n}{A}}-(1-\delta_n/2)\left(\log{\frac{n}{A}}\right)\frac{\log(1-\rho^{2}\delta_n^{2}/16)}{\log(1-\rho^{2})}+{\delta_n\over 4}\log{\eta}\\
\ge &\frac{\delta_n^2}{8}\log{\frac{n}{A}}-{\delta_n^{2}\over 16}(1-\delta_n/2)\left(\log{\frac{n}{A}}\right)+{\delta_n\over 4}\log{\eta}\\
= &\frac{\delta_n^2}{16}(1+\delta_n/2)\log{\frac{n}{A}}+{\delta_n\over 4}\log{\eta}\\
\ge & \frac{\delta_n^2}{16}\log{\frac{n}{A}}\to \infty.
\end{align*}
Thus,
$$
\EE_{0}(W_R^{1+\delta_n/4})/(\eta |\tilde{\Rcal}(A)|)^{\delta_n/4}\rightarrow 0.
$$

Next, we argue that
$$\EE_{0}\left||\tilde{\Rcal}(A)|^{-1}\sum_{R\in \tilde{\Rcal}(A)}W_R-1\right|\rightarrow 0.$$
To this end, write
$$
\bar{W}=|\tilde{\Rcal}(A)|^{-1}\sum_{R\in \tilde{\Rcal}(A)}(W_R-1),
$$
$$
\bar{W}_1=|\tilde{\Rcal}(A)|^{-1}\sum_{R\in \tilde{\Rcal}(A)}(W_R-1){\bf 1}_{(|W_R-1|>\eta |\tilde{\Rcal}(A)|)},
$$
and
$$
\bar{W}_2=|\tilde{\Rcal}(A)|^{-1}\sum_{R\in \tilde{\Rcal}(A)}(W_R-1){\bf 1}_{|W_R-1|\le \eta |\tilde{\Rcal}(A)|}.
$$
Observe that
$$
\EE_{0}\left|\bar{W}\right|\le \EE_{0}\left|\bar{W}_1\right|+\EE_{0}\left|\bar{W}_2\right|\le \EE_{0}\left|\bar{W}_1\right|+\eta.
$$
On the other hand,
$$
\EE_{0}\left|\bar{W}_1\right|\le  \EE_{0}(W_R{\bf 1}_{(W_R>\eta |\tilde{\Rcal}(A)|)})\le \EE_{0}(W_R^{1+\delta_n/4})/(\eta |\tilde{\Rcal}(A)|)^{\delta_n/4}\to 0.
$$
We can take $\eta\downarrow 0$ to get
$$\EE_{0}\left||\tilde{\Rcal}(A)|^{-1}\sum_{R\in \tilde{\Rcal}(A)}W_R-1\right|\rightarrow 0.$$

Finally, let $\PP_1$ be the uniform mixture of $\PP_R$ for $R\in \tilde{\Rcal}(A)$, that is,
$$\PP_1=|\tilde{\Rcal}(A)|^{-1}\sum_{R\in \tilde{\Rcal}(A)} \PP_{R}.$$
Then for any test $\Delta$,
\begin{align*}
\PP_{0}(\Delta=1)+\PP_{1}(\Delta=0)=&\EE_{0}(\Delta)+1-\min_{R\in \tilde{\Rcal}(A)}\EE_{R}(\Delta)\\
\ge&\EE_{0}(\Delta)+1-|\tilde{\Rcal}(A)|\sum_{R\in \tilde{\Rcal}(A)}\EE_{R}(\Delta)\\
\ge &1-\EE_{0}(\Delta(1-|\tilde{\Rcal}(A)|^{-1}\sum_{R\in \tilde{\Rcal}(A)}W_{R}))\\
\ge &1-\EE_{0}\left|1-|\tilde{\Rcal}(A)|^{-1}\sum_{R\in \tilde{\Rcal}(A)}W_{R}\right|\rightarrow 1,
\end{align*}
which completes the proof of the first statement.

To show the second statement, we assume the contrary that $c_{n}$ is bounded from above. Then $\{c_n\}$ must have a convergent subsequence. Without loss of generality, assume $c_{n}$ itself converges to some $b\in [0,\infty)$. Then
$$
\log{W_R}\rightarrow_{d} N\left(-\frac{b}{2},b\right),
$$
which implies that
$$
\lim\sup\PP_{R}(\Delta=1)< 1.
$$
This contradicts with the fact that that the type II error of $\Delta$ goes to 0 as $n\to \infty$. The second statement is therefore proven.
\end{proof}


\section*{Appendix -- Auxiliary Results and Proofs}
We first state tail bounds for $t$ and $F$ distributions necessary for our derivations.

\begin{lemma}
\label{lem:tconcen}
Let $X$ be a random variable following a $t$ distribution with degree of freedom $n>1$. There exists a numerical constant $0<c_1<c_2$ such that
\begin{equation}
\label{eq:ttail}
c_1n^{-1/2}\left(1+\frac{x^{2}}{n}\right)^{-\frac{n}{2}}\le \PP(|X|>x)\le c_2\left(1+\frac{x^{2}}{n}\right)^{-\frac{n}{2}}
\end{equation}
for any $x\ge 1$. In particular,
\begin{equation}
\label{eq:lrtail}
c_1n^{-1/2}e^{-u/2}\le \PP\left\{n\log\left(1+{X^2\over n}\right)\ge u\right\}\le c_2e^{-u/2},
\end{equation}
for any $u\ge 1$.
\end{lemma}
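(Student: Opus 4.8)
The plan is to work directly with the density of the $t_n$ distribution,
$$
f_n(t)=B_n\left(1+{t^2\over n}\right)^{-{n+1\over 2}},\qquad B_n={\Gamma((n+1)/2)\over\sqrt{n\pi}\,\Gamma(n/2)},
$$
and, using symmetry $\PP(|X|>x)=2B_n\int_x^\infty(1+t^2/n)^{-(n+1)/2}\,dt$, to reduce both inequalities in (\ref{eq:ttail}) to two-sided control of the tail integral. A preliminary step I would record first is that $B_n$ is bounded above and below by absolute positive constants (uniformly over the relevant range $n\ge2$, which is all that is used since in applications $n=|R|-2$ is a large integer): this follows from the classical two-sided bound $\sqrt{(n-1)/2}\le \Gamma((n+1)/2)/\Gamma(n/2)\le\sqrt{n/2}$ (Wendel/Gautschi, or Stirling), which gives $B_n\in[c,C]$. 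Since the target in (\ref{eq:ttail}) carries the exponent $-n/2$ while the density carries $-(n+1)/2$, the entire issue is accounting for the extra half power of $(1+t^2/n)$, and this is precisely what produces the $n^{-1/2}$ gap between the lower and upper bounds.

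For the upper bound I would use $t/x\ge1$ on $[x,\infty)$ together with the substitution $u=1+t^2/n$:
$$
\int_x^\infty\left(1+{t^2\over n}\right)^{-{n+1\over2}}dt\le{1\over x}\int_x^\infty t\left(1+{t^2\over n}\right)^{-{n+1\over2}}dt={n\over(n-1)x}\left(1+{x^2\over n}\right)^{-{n-1\over2}}.
$$
Writing $(1+x^2/n)^{-(n-1)/2}=(1+x^2/n)^{-n/2}(1+x^2/n)^{1/2}$ and using $x\ge1,\,n\ge1$ to bound $x^{-1}(1+x^2/n)^{1/2}=\sqrt{x^{-2}+n^{-1}}\le\sqrt2$, together with $B_n\le C$ and $n/(n-1)\le2$ for $n\ge2$, yields $\PP(|X|>x)\le c_2(1+x^2/n)^{-n/2}$.

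The lower bound is the delicate step and the main obstacle, as it requires choosing an integration window whose width manufactures the factor $n^{-1/2}$ while preventing the integrand from decaying by more than a constant. I would restrict to $[x,x+a]$ with $a=(1+x^2/n)/x$ and bound the integral below by $a\,(1+(x+a)^2/n)^{-(n+1)/2}$. This choice is engineered so that two things happen at once. First, $a(1+x^2/n)^{-1/2}=x^{-1}(1+x^2/n)^{1/2}\ge n^{-1/2}$ for $x\ge1$, supplying exactly the $n^{-1/2}$ factor. Second, a short computation gives $(1+(x+a)^2/n)/(1+x^2/n)=1+(2ax+a^2)/(n+x^2)\le1+4/n$ for $x\ge1$, so that raising to the power $(n+1)/2$ costs at most $e^{4}$. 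Combining, $\int_x^\infty(\ldots)\,dt\ge a(1+(x+a)^2/n)^{-(n+1)/2}\ge e^{-4}n^{-1/2}(1+x^2/n)^{-n/2}$, and with $B_n\ge c$ this gives the lower bound in (\ref{eq:ttail}). The crux is the scale of $a$: a larger window (e.g.\ $a\asymp\sqrt{1+x^2/n}$) would overshoot, forcing the ratio to the power $(n+1)/2$ to blow up like $e^{\sqrt n}$, while a smaller one would lose the $n^{-1/2}$ factor.

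Finally, (\ref{eq:lrtail}) follows by the change of variable $x=\sqrt{n(e^{u/n}-1)}$, the value solving $1+x^2/n=e^{u/n}$, so that $(1+x^2/n)^{-n/2}=e^{-u/2}$ and $\{n\log(1+X^2/n)\ge u\}=\{|X|>x\}$. Since $u\ge1$ forces $x\ge1$ (as $n\log(1+1/n)<1\le u$), the bound (\ref{eq:ttail}) applies at this $x$ and delivers $c_1n^{-1/2}e^{-u/2}\le\PP\{n\log(1+X^2/n)\ge u\}\le c_2e^{-u/2}$ at once.
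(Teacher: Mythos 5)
Your proposal is correct and follows essentially the same route as the paper: bound the $t_n$ density above and below by absolute constants times $(1+t^2/n)^{-(n+1)/2}$, estimate the tail integral in both directions, and obtain (\ref{eq:lrtail}) via the substitution $x=\sqrt{n(e^{u/n}-1)}$. The only divergence is in the lower bound, where the paper multiplies the integrand by $\tfrac{x}{\sqrt{n}}(1+x^2/n)^{-1/2}\le 1$ and integrates exactly over $[u,\infty)$, whereas you integrate over a finite window of width $a=(1+x^2/n)/x$ and check that the integrand loses only a constant factor there; both correctly produce the $n^{-1/2}$ gap, and your computations (in particular the bound $\bigl(1+(x+a)^2/n\bigr)/(1+x^2/n)\le 1+4/n$) check out.
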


\begin{proof}[Proof of Lemma \ref{lem:tconcen}]
Recall that the density of a $t$ distribution with degree of freedom $n$ is
$$
f(x)=\frac{\Gamma(\frac{n+1}{2})}{\sqrt{n\pi}\Gamma(\frac{n}{2})}\left(1+\frac{x^{2}}{n}\right)^{-\frac{n+1}{2}}\le C\left(1+\frac{x^{2}}{n}\right)^{-\frac{n+1}{2}},
$$
for an absolute constant $C>0$. Then, for any $u>0$,
\begin{align*}
\PP(X>u)&\le C\int_{u}^{\infty}\left(1+\frac{x^{2}}{n}\right)^{-\frac{n+1}{2}}dx\\
&\le C\int_{u}^{\infty}{x\over u}\left(1+\frac{x^{2}}{n}\right)^{-\frac{n+1}{2}}dx\\
&= {nC\over 2u}\int_{u}^{\infty}\left(1+\frac{x^{2}}{n}\right)^{-\frac{n+1}{2}}d\left(1+\frac{x^{2}}{n}\right)\\
&= {nC\over (n-1)u}\left(1+\frac{x^{2}}{n}\right)^{-\frac{n-1}{2}}\biggl|_u^\infty\\
&\le 2C {1\over u}\left(1+\frac{u^{2}}{n}\right)^{-\frac{n-1}{2}}.
\end{align*}
The upper bound in (\ref{eq:ttail}) follows immediately by taking $c=4\sqrt{2}C$, by symmetry of $t$ distribution. On the other hand, observe that
$$
f(x)\ge C\left(1+\frac{x^{2}}{n}\right)^{-\frac{n+1}{2}},
$$
for some constant $C>0$. Thus,
\begin{align*}
\PP(X>u)&\ge C\int_{u}^{\infty}\left(1+\frac{x^{2}}{n}\right)^{-\frac{n+1}{2}}dx\\
&\ge C\int_{u}^{\infty}{x\over \sqrt{n}}\left(1+\frac{x^{2}}{n}\right)^{-\frac{n}{2}-1}dx\\
&= {\sqrt{n}C\over 2}\int_{u}^{\infty}\left(1+\frac{x^{2}}{n}\right)^{-\frac{n}{2}-1}d\left(1+\frac{x^{2}}{n}\right)\\
&= {\sqrt{n}C\over (n-1)}\left(1+\frac{x^{2}}{n}\right)^{-\frac{n}{2}}\biggl|_u^\infty\\
&\ge {\sqrt{n}C\over (n-1)}\left(1+\frac{u^{2}}{n}\right)^{-\frac{n}{2}}.
\end{align*}
The lower bound in (\ref{eq:ttail}) then follows immediately.

Now, taking
$$
x=\sqrt{n(e^{u/n}-1)}
$$
in (\ref{eq:ttail}) yields (\ref{eq:lrtail}).
\end{proof}
\vskip 25pt

\begin{lemma}
\label{lem:ftail}
Let $U_1\sim \chi^2_{n_1}$ and $U_2\sim \chi^2_{n_2}$ be two independent random variables. Then for any $-1<x<1$,
$$
\PP\left\{\left|{n_1+n_2\over n_1}{U_1\over U_1+U_2}-1\right|\ge x\right\}\le 2\exp\left(-{n_1x^2\over 12}\right).
$$
\end{lemma}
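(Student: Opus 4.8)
The plan is to split the deviation into its two one-sided parts and bound each by a Chernoff argument applied to an exact linear combination of the two independent chi-squares, rather than to $U_1$ and $U_2$ separately. Writing $Z:=\frac{n_1+n_2}{n_1}\cdot\frac{U_1}{U_1+U_2}$ and noting that the event is only informative for positive deviations, I would first assume $0<x<1$ and reduce to
$$
\PP\{|Z-1|\ge x\}\le \PP\{Z\ge 1+x\}+\PP\{Z\le 1-x\}.
$$
The key first step is to linearize: clearing the denominator turns these into $\{(n_2-xn_1)U_1\ge n_1(1+x)U_2\}$ and $\{(n_2+xn_1)U_1\le n_1(1-x)U_2\}$, i.e.\ events of the form $\{aU_1-bU_2\ge 0\}$ with explicit nonnegative coefficients. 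Since $U_1$ and $U_2$ are independent, the chi-square moment generating function $\EE e^{\lambda U}=(1-2\lambda)^{-n/2}$ is available for each factor.

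Next I would dispose of the one degenerate situation. In the upper tail, if $n_2-xn_1\le 0$ then $Z\le (n_1+n_2)/n_1=1+n_2/n_1\le 1+x$, so $\PP\{Z\ge 1+x\}=0$ and nothing is needed; otherwise set $a=n_2-xn_1>0$ and $b=n_1(1+x)$ and bound, for $0<s:=2\lambda<1/a$,
$$
\PP\{aU_1-bU_2\ge 0\}\le (1-sa)^{-n_1/2}(1+sb)^{-n_2/2}.
$$
Minimizing the exponent over $s$ admits a clean closed form: the optimizer gives $s^\ast a=\frac{x}{1+x}$ and $s^\ast b=\frac{xn_1}{n_2-xn_1}$, so that after substitution the exponent becomes $\frac{n_1}{2}\log(1+x)+\frac{n_2}{2}\log(1-xn_1/n_2)$. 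The lower tail is entirely parallel, with optimized exponent $\frac{n_1}{2}\log(1-x)+\frac{n_2}{2}\log(1+xn_1/n_2)$.

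The final step is elementary. Using $\log(1+u)\le u$, $\log(1-u)\le -u-u^2/2$, and $\log(1+x)\le x-x^2/2+x^3/3$ for $x\in[0,1)$, both exponents collapse. In the upper tail the $\pm\frac{n_1x}{2}$ terms cancel and, after discarding a further negative term, what remains is at most $-\frac{n_1x^2}{4}\bigl(1-\frac{2x}{3}\bigr)\le -\frac{n_1x^2}{12}$ for $x<1$; the lower tail yields the even smaller $-\frac{n_1x^2}{4}$. Adding the two one-sided probabilities then delivers the claimed $2\exp(-n_1x^2/12)$.

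The point where care is required is precisely the choice of reduction, and this is what I expect to be the only real obstacle. A naive union bound that controls the relative fluctuations of $U_1$ and $U_2$ with separate fixed tolerances breaks down when $n_2\gg n_1$: the target exponent must depend on $n_1$ alone, yet a deviation of $U_2$ by itself can only be penalized at a rate governed by $n_2$, and the crude monotonicity bound on $Z$ then loses too much. Working with the exact linear combination $aU_1-bU_2$ repairs this, because the optimized Chernoff exponent automatically produces the cancellation of the linear-in-$x$ terms and isolates the $n_1x^2$ scaling uniformly over the ratio $n_1/n_2$; verifying that cancellation through the logarithmic inequalities is the one computation that must be done carefully.
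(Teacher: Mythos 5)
Your proof is correct and follows essentially the same route as the paper, which simply cites the one-sided bounds of \cite{dasgupta2003elementary} (whose proof is precisely your Chernoff computation on the linear combination $aU_1-bU_2$) and then concludes via the elementary inequality $\log(1+x)\le x-x^2/6$ for $|x|<1$. The only difference is that you re-derive the Dasgupta--Gupta tail bounds from scratch, obtaining a marginally sharper exponent along the way; your handling of the degenerate case $n_2\le xn_1$ and the final logarithmic estimates are all sound.
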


\begin{proof}[Proof of Lemma \ref{lem:ftail}] As shown by \cite{dasgupta2003elementary}, for any $x>0$,
$$
\PP\left\{{n_1+n_2\over n_1}{U_1\over U_1+U_2}\le 1-x\right\}\le \exp\left({n_1\over 2}(x+\log (1-x))\right),
$$
and
$$
\PP\left\{{n_1+n_2\over n_1}{U_1\over U_1+U_2}\ge 1+x\right\}\le \exp\left({n_1\over 2}(-x+\log (1+x))\right).
$$
The claim then follows from the fact that
$$
\log(1+x)\le x-{x^2\over 6}
$$
for all $x$ such that $|x|<1$.
\end{proof}
\vskip 25pt

The following observation on the sample correlation coefficient is useful:

\begin{lemma}
\label{lem:LRH1}
Assume that $\{(X_i,Y_i): i\in R\}$ are iid copies of $(X,Y)\sim N((\mu_1,\mu_2)^\top,\Sigma)$ where
$$\Sigma=\begin{bmatrix}1 & \rho \\ \rho  & 1\end{bmatrix}.$$
Then
$$\sum_{i\in R}(Y_{i}-\bar{Y}_R )^{2}(1-r_{R}^{2})\sim (1-\rho^{2})\chi^{2}_{|R|-2}.$$
\end{lemma}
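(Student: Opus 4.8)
The plan is to exploit the Gaussian linear-model structure hidden in the bivariate normal and to argue conditionally on the predictors $\{X_i : i \in R\}$. Since $(X_i, Y_i)$ is bivariate normal with unit marginal variances and correlation $\rho$, the conditional law of $Y_i$ given $X_i$ is Gaussian with mean $\mu_2 + \rho(X_i - \mu_1)$ and variance $1-\rho^2$. Writing $\alpha = \mu_2 - \rho\mu_1$, this yields the exact representation
$$
Y_i = \alpha + \rho X_i + \varepsilon_i, \qquad i \in R,
$$
where, conditionally on $\{X_i\}$, the errors $\varepsilon_i$ are i.i.d. $N(0, 1-\rho^2)$. This is precisely the fixed-design simple linear regression model with an intercept.

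First I would establish the algebraic identity that rewrites the quantity of interest as a residual sum of squares. Using the centered variables $\tilde{X}_i = X_i - \bar{X}_R$ and $\tilde{Y}_i = Y_i - \bar{Y}_R$ together with the definition of $r_R$, one has
$$
\sum_{i\in R}(Y_i - \bar{Y}_R)^2 (1 - r_R^2) = \sum_{i\in R}\tilde{Y}_i^2 - \frac{\left(\sum_{i\in R}\tilde{X}_i \tilde{Y}_i\right)^2}{\sum_{i\in R}\tilde{X}_i^2},
$$
which is exactly the residual sum of squares from the least-squares regression of $Y$ on $X$ with an intercept. One should note that $\sum_{i\in R}\tilde{X}_i^2 > 0$ almost surely, since the $X_i$ are continuous and $|R|\ge 2$, so the expression is well defined.

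Then I would invoke the classical distribution theory for the Gaussian linear model: conditionally on $\{X_i\}$, since there are $|R|$ observations and two estimated parameters (intercept and slope), the residual sum of squares satisfies $(1-\rho^2)\chi^2_{|R|-2}$. This follows from Cochran's theorem, as the residual vector is the orthogonal projection of the Gaussian error vector $(\varepsilon_i)_{i\in R}$ onto the complement of the two-dimensional column space spanned by $\mathbf{1}$ and $(X_i)_{i\in R}$, a subspace of dimension $|R|-2$, and the conditional variance of each $\varepsilon_i$ is $1-\rho^2$.

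The key observation, and the only subtle point, is that this conditional law $(1-\rho^2)\chi^2_{|R|-2}$ does not depend on the realized values of $\{X_i\}$ whatsoever. Consequently the unconditional distribution of $\sum_{i\in R}(Y_i - \bar{Y}_R)^2(1-r_R^2)$ coincides with its conditional distribution, which gives the claim. The main thing to handle carefully is to make this conditioning argument precise and to confirm that the degrees of freedom are $|R|-2$, reflecting the two nuisance parameters absorbed in the location and slope; this is exactly the degrees-of-freedom correction built into the definition of $L_R$ in (\ref{eq:lr}).
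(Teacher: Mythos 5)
Your proposal is correct and follows essentially the same route as the paper's proof: both identify $\sum_{i\in R}(Y_i-\bar{Y}_R)^2(1-r_R^2)$ with the residual sum of squares from the least-squares regression of $Y$ on $X$ with an intercept, apply the classical conditional $\chi^2_{|R|-2}$ result given $\{X_i\}$, and observe that the conditional law is free of $\{X_i\}$ so it is also the marginal law. Your write-up is somewhat more explicit about the algebraic identity and the conditioning step, but the argument is the same.
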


\begin{proof}[Proof of Lemma \ref{lem:LRH1}]
Consider a linear regression of $Y$ over $X$:
$$
Y=\beta_0+\beta X+\epsilon.
$$
Recall that
$$\widehat{\beta}=\frac{\sum_{i\in R}(X_{i}-\bar{X}_R )(Y_{i}-\bar{Y}_R )}{\sum_{i\in R}(X_{i}-\bar{X}_R )^{2}}$$
and $\widehat{\beta}_0=\bar{Y}_R -\widehat{\beta}\bar{X}_R $ are the least squares estimate of of $Y$ over $X$ where
$$
\bar{X}_R ={1\over |R|}\sum_{i\in R} X_i\qquad {\rm and}\qquad \bar{Y}_R ={1\over |R|}\sum_{i\in R} Y_i.
$$
The residual sum of squares of the regression can then be written as
$$
\sum_{i\in R}(Y_{i}-\widehat{\beta}_0-\widehat{\beta}X_{i})^{2}=\sum_{i\in R}(Y_{i}-\bar{Y}_R )^{2}(1-r_R^{2})
$$
Conditioned on $X_{i}$, the residual sum of squares will follow $(1-\rho^{2})\chi^{2}_{|R|-2}$. Thus the margin distribution of the residual sum of squares is also $(1-\rho^{2})\chi^{2}_{|R|-2}$.
\end{proof}
\vskip 25pt

Next we derive a tail bound for the sample correlation coefficient. For brevity, we work with the case when $(X,Y)$ are known to be centered so that
\begin{equation}
\label{eq:centerr}
r_R={\sum_{i\in R}X_iY_i\over \sqrt{\sum_{i\in R}X_i^2}\sqrt{\sum_{i\in R}Y_i^2}}
\end{equation}
where $(X_i,Y_i)$s are independent copies of $(X,Y)$. Treatment for the more general case is completely analogous, yet this simplification allows us to avoid lengthy discussions about the smaller order effects due to centering by sample means, and repeatedly switching between $|R|-1$ or $|R|-2$ as the appropriate degrees of freedom.

\begin{lemma}
\label{lem:r}
Assume that $\{(X_i,Y_i): i\in R\}$ are iid copies of $(X,Y)\sim N(0,I_2)$. Then for any $x>0$,
$$
\PP\left\{\left|\sum_{i\in R}X_{i}Y_{i}\right|\ge 2\sqrt{x|R|}+2x\right\}\le 4e^{-x}.
$$
If in addition, $0<x< |R|/16$, then
$$
\PP\{|r_R|\ge x\}\le 6\exp(-|R|x^2/64).
$$
\end{lemma}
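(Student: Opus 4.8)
The plan is to handle the two bounds in turn, reducing both to the Gaussian chi-square tail inequalities of \cite{ChiSqu}.

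For the first bound I would exploit the polarization identity $X_iY_i=\tfrac14\left[(X_i+Y_i)^2-(X_i-Y_i)^2\right]$. Writing $a_i=(X_i+Y_i)/\sqrt2$ and $b_i=(X_i-Y_i)/\sqrt2$, each $(a_i,b_i)$ is an orthogonal rotation of $(X_i,Y_i)$, so the $a_i$ are i.i.d. $N(0,1)$, the $b_i$ are i.i.d. $N(0,1)$, and since $\Cov(a_i,b_i)=\tfrac12(\Var X_i-\Var Y_i)=0$ the two families are jointly Gaussian and hence independent. Setting $U=\sum_{i\in R}a_i^2$ and $V=\sum_{i\in R}b_i^2$ gives $U,V\sim\chi^2_{|R|}$ independent with $\sum_{i\in R}X_iY_i=\tfrac12(U-V)$. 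The event $\{|\sum_{i\in R}X_iY_i|\ge 2\sqrt{x|R|}+2x\}$ is then $\{|U-V|\ge 4\sqrt{x|R|}+4x\}$, which lies in the union of an upper-tail event for one chi-square and a lower-tail event for the other, since $U\le|R|+2\sqrt{|R|x}+2x$ together with $V\ge|R|-2\sqrt{|R|x}$ forces $U-V\le4\sqrt{|R|x}+2x$. Applying the Laurent--Massart inequalities $\PP\{U-|R|\ge2\sqrt{|R|x}+2x\}\le e^{-x}$ and $\PP\{V-|R|\le-2\sqrt{|R|x}\}\le e^{-x}$ in both orderings yields the factor-of-four bound $4e^{-x}$.

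For the second bound write $S=\sum_{i\in R}X_iY_i$, $P=\sum_{i\in R}X_i^2$, $Q=\sum_{i\in R}Y_i^2$, so that $|r_R|=|S|/\sqrt{PQ}$ with $P,Q\sim\chi^2_{|R|}$. I would split the event $\{|r_R|\ge x\}=\{|S|\ge x\sqrt{PQ}\}$ on whether $\sqrt{PQ}\ge|R|/2$; on that event the threshold is at least $x|R|/2$, giving
$$
\PP\{|r_R|\ge x\}\le \PP\{|S|\ge x|R|/2\}+\PP\{\sqrt{PQ}<|R|/2\}.
$$
The first term is controlled by the bound just proved: taking the parameter value $t=|R|x^2/64$ makes $2\sqrt{t|R|}+2t=|R|x/4+|R|x^2/32\le x|R|/2$ (valid for $x\le1$, the only substantive range since $|r_R|\le1$), so that $\PP\{|S|\ge x|R|/2\}\le 4e^{-|R|x^2/64}$. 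For the second term, $\{\sqrt{PQ}<|R|/2\}\subseteq\{P<|R|/2\}\cup\{Q<|R|/2\}$, and the chi-square lower tail gives $\PP\{P<|R|/2\}\le e^{-|R|/16}$; since $|R|/16\ge|R|x^2/64$ in the relevant range this is at most $e^{-|R|x^2/64}$, contributing $2e^{-|R|x^2/64}$ for the pair. Summing the two contributions produces the claimed $6e^{-|R|x^2/64}$.

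The routine ingredients are the Gaussian/chi-square tail estimates; the step that needs care is the coupling of numerator and denominator in the second bound — choosing the split level $|R|/2$ for $\sqrt{PQ}$ and the parameter $t=|R|x^2/64$ so that both error terms carry the common exponent $|R|x^2/64$. I would record at the outset that $|r_R|\le1$ makes the statement vacuous for $x\ge1$, so only $x\le1$ is substantive; the stated hypothesis $0<x<|R|/16$ comfortably accommodates these reductions while keeping the lower-tail split level $|R|/2$ admissible.
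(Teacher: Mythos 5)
Your proposal is correct and follows essentially the same route as the paper: the polarization identity $\sum X_iY_i=\tfrac12\sum\bigl(\tfrac{X_i+Y_i}{\sqrt2}\bigr)^2-\tfrac12\sum\bigl(\tfrac{X_i-Y_i}{\sqrt2}\bigr)^2$ plus the Laurent--Massart $\chi^2$ tails for the first bound, and the same three events (numerator large, each denominator $\chi^2$ small) for the second, yielding the same constant $6$. The only difference is organizational: the paper establishes $|r_R|\le 8\sqrt{u/|R|}$ on an event of probability $1-6e^{-u}$ and then reparametrizes $u=|R|x^2/64$, whereas you split directly on $\{\sqrt{PQ}\ge|R|/2\}$ — both are valid and equivalent in substance.
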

\begin{proof}[Proof of Lemma \ref{lem:r}] Write
$$
\sum_{i\in R}X_{i}Y_{i}={1\over 2}\sum_{i\in R}\left({1\over \sqrt{2}}(X_i+Y_i)\right)^2-{1\over 2}\sum_{i\in R}\left({1\over \sqrt{2}}(X_i-Y_i)\right)^2.
$$
Then
\begin{eqnarray*}
\PP\left\{\left|\sum_{i\in R}X_{i}Y_{i}\right|\ge 2\sqrt{u|R|}+2u\right\}&\le& \PP\left\{\left|\sum_{i\in R}\left({1\over \sqrt{2}}(X_i+Y_i)\right)^2-|R|\right|\ge 2\sqrt{u|R|}+2u\right\}\\
&&+\PP\left\{\left|\sum_{i\in R}\left({1\over \sqrt{2}}(X_i-Y_i)\right)^2-|R|\right|\ge 2\sqrt{u|R|}+2u\right\}\\
&\le&4e^{-u},
\end{eqnarray*}
where the second inequality follows from the $\chi^2$ upper and lower tail bound of \cite{ChiSqu}. Applying the $\chi^2$ lower tail bound from \cite{ChiSqu}, we can also derive that
\begin{equation}
\label{eq:chi2x}
\PP\left\{\left|\sum_{i\in R}X_{i}^2\right|\le |R|-2\sqrt{u|R|}\right\}\le e^{-u}
\end{equation}
and
\begin{equation}
\label{eq:chi2y}
\PP\left\{\left|\sum_{i\in R}Y_{i}^2\right|\le |R|-2\sqrt{u|R|}\right\}\le e^{-u}
\end{equation}
Therefore, for any $u<|R|/16$,
$$
|r_{R}|\le {2\sqrt{u|R|}+2u\over |R|-2\sqrt{u|R|}}\le {2\over |R|}\left(2\sqrt{u|R|}+2u\right)\le 8\sqrt{u\over |R|}.
$$
with probability at least $1-6e^{-u}$. The claim follows immediately.
\end{proof}
\vskip 25pt

We are also interested in the difference in correlation coefficients between two different regions. The following lemma provides a useful probabilisitic tool for such purposes.
\begin{lemma}
\label{lem:difr}
Assume that $\{(X_i,Y_i): i\in R_1\cup R_2\}$ are iid copies of $(X,Y)\sim N(0,I_2)$, and $2|R_1\cap R_2|\ge |R_1\cup R_2|$. Then there exist numerical constants $c_0,c_1,c_2>0$ such that for any $x<c_0|R_1|$,
\begin{eqnarray*}
\PP(\left||R_1|r_{R_1}^2-|R_2|r_{R_2}^2\right|\ge x)\le \hskip 250pt\\
c_1\exp\left(-c_2\min\left\{\left({|R_1\cap R_2|\over |R_1\cup R_2|-|R_1\cap R_2|}\right)^{1/2}x,|R_1\cap R_2|^{1/3}x^{2/3}\right\}\right).
\end{eqnarray*}
In particular, if
$$
\zeta:={|R_1\cap R_2|\over \sqrt{|R_1||R_2|}}\ge {1\over 4},
$$
then there exists a numerical constant $c_3>0$ such that for any $x<c_0|R_1|$,
$$
\PP(\left||R_1|r_{R_1}^2-|R_2|r_{R_2}^2\right|\ge x)\le c_1\exp\left(-c_3\min\left\{ (1-\zeta)^{-1/2}x, |R_1\cap R_2|^{1/3}x^{2/3}\right\}\right).
$$
\end{lemma}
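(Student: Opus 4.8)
The plan is to reduce to a \emph{nested} comparison and then recognise the leading fluctuation as a product of two independent, mean-zero sums, whose tail displays exactly the two regimes in the stated exponent. Throughout write $A=R_1\cap R_2$, $B_1=R_1\setminus R_2$, $B_2=R_2\setminus R_1$, and $S_R=\sum_{i\in R}X_iY_i$, $U_R=\sum_{i\in R}X_i^2$, $V_R=\sum_{i\in R}Y_i^2$, so that $|R|r_R^2=|R|S_R^2/(U_RV_R)$ by the centred form (\ref{eq:centerr}). The hypothesis $2|R_1\cap R_2|\ge|R_1\cup R_2|$ is equivalent to $|B_1|+|B_2|\le|A|$, i.e.\ the two extra blocks are small relative to the common core. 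I would first reduce to this small-increment setting by writing $|R_1|r_{R_1}^2-|R_2|r_{R_2}^2=(|R_1|r_{R_1}^2-|A|r_A^2)-(|R_2|r_{R_2}^2-|A|r_A^2)$ and splitting the deviation $x$ between the two terms. It then suffices to bound, for a single block with $A\sqcup B=R'$ and $m:=|B|\le|A|$, the nested increment $|R'|r_{R'}^2-|A|r_A^2$. For the pair $(R',A)$ one has $|R'\cap A|=|A|$ and $|R'\cup A|-|R'\cap A|=m$, so a nested tail $c_1\exp(-c_2\min\{(|A|/m)^{1/2}x,|A|^{1/3}x^{2/3}\})$ is precisely the first form of the Lemma for this pair; summing the two nested bounds and using $\max\{|B_1|,|B_2|\}\ge(|B_1|+|B_2|)/2=(|R_1\cup R_2|-|R_1\cap R_2|)/2$ recovers the general statement, and the $\zeta$-form follows from $1-\zeta\asymp(|B_1|+|B_2|)/|A|$ when $\zeta\ge 1/4$.

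For the nested increment I would strip off the variance estimates first. By the $\chi^2$ tail bounds behind Lemma \ref{lem:r}, on an event of probability at least $1-Ce^{-u}$ all of $U_A,V_A,U_{R'},V_{R'}$ lie within $|A|(1+O(\sqrt{u/|A|}))$, so $|R|r_R^2=\Phi_R\,(1+O(\sqrt{u/|A|}))$ with $\Phi_R:=S_R^2/|R|$. Expanding the oracle difference using $S_{R'}=S_A+S_B$ gives $\Phi_{R'}-\Phi_A=2S_AS_B/|R'|+S_B^2/|R'|-S_A^2\,m/(|A||R'|)$. Since $S_A^2$ is of order $|A|$ and $S_B^2$ is of order $m$, the last two terms, together with all denominator-correction terms generated by the factor $1+O(\sqrt{u/|A|})$, are of order $m/|A|\asymp 1-\zeta$ or smaller, hence of strictly lower order than the signal. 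The cross term $2S_AS_B/|R'|$ equals, up to the bounded factor $|A|/|R'|\in[\tfrac12,1]$, the product of the two \emph{independent} mean-zero sums $a:=S_A/\sqrt{|A|}$ and $D:=S_B/\sqrt{|A|}$.

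The heart of the argument is the tail of this product. Each of $S_A,S_B$ is a sum of independent, mean-zero, unit-variance sub-exponential variables $X_iY_i$, so by a Bernstein bound $a$ is sub-Gaussian with variance proxy $1$ up to scale $\sqrt{|A|}$, while $D$ is sub-Gaussian with variance proxy $m/|A|\asymp 1-\zeta$ up to scale $m/\sqrt{|A|}$ and sub-exponential beyond. Bounding $\PP(|aD|>x/2)\le \PP(|a|>\lambda)+\PP(|D|>x/(2\lambda))$ and optimising over $\lambda$, the regime where both factors stay in their sub-Gaussian range yields the exponent $(|A|/m)^{1/2}x\asymp(1-\zeta)^{-1/2}x$ (the signature linear-in-$x$ tail of a product of two mean-zero sub-Gaussians), whereas the regime where the $B$-supported factor $D$ enters its sub-exponential range gives $|A|^{1/3}x^{2/3}$; the crossover sits at $x\asymp|A|(1-\zeta)^{3/2}$, and the standing constraint $x<c_0|R_1|$ keeps the optimal $\lambda$ and $u$ within the ranges where the approximations above are valid.

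The main obstacle I anticipate is the bookkeeping in the second step rather than the probabilistic core. One must verify that every non-cross term and every denominator correction carries either an extra factor $O(\sqrt{u/|A|})$ or is supported on $B$ (hence of size $O(m)$), so that none of them reaches the signal scale $\sqrt{1-\zeta}$ and contaminates the linear regime. It is exactly the nested, shared-core structure secured in the first step that guarantees this: after subtracting $|A|r_A^2$, every surviving fluctuation involves the small block $B$, which is what lets the clean product tail of the cross term dictate the final bound.
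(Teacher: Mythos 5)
Your proposal is correct and follows essentially the same route as the paper's proof: reduce to the nested case by comparing both regions to $R_1\cap R_2$, strip the variance denominators with $\chi^2$/F-ratio concentration, expand the numerator over the blocks so that the cross term $2S_AS_B/|R'|$ dominates, and read off the two regimes of the exponent from the sub-Gaussian/sub-exponential tails of the product of the two independent centred sums. One small bookkeeping point: the denominator-correction term $\Phi_R\cdot O(\sqrt{u/|A|})$ is not of order $m/|A|$ as you assert --- it contributes the $|A|^{1/3}x^{2/3}$ branch of the exponent (this is exactly the paper's event $\Ecal_1(u)$ with $u\asymp |R_2|x^3$) rather than being negligible, but since that branch is already present in the target bound this does not affect the conclusion.
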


\begin{proof}[Proof of Lemma \ref{lem:difr}] We first consider the case when $R_2\subseteq R_1$. Recall that
$$
r_{R_1}=\frac{\sum_{i\in R_{1}}X_{i}Y_{i}}{\sqrt{\sum_{i\in R_{1}}X_{i}^{2}\sum_{i\in R_{1}}Y_{i}^{2}}}, \qquad {\rm and} \qquad
r_{R_2}=\frac{\sum_{i\in R_{1}}X_{i}Y_{i}}{\sqrt{\sum_{i\in R_{2}}X_{i}^{2}\sum_{i\in R_{2}}Y_{i}^{2}}}.
$$
Therefore,
\begin{eqnarray*}
|R_2|r_{R_2}^2-|R_1|r_{R_1}^2&=& \left({1\over \sqrt{|R_2|}}\sum_{i\in R_{2}}X_{i}Y_{i}\right)^2\left(\frac{|R_2|^2}{{\sum_{i\in R_{2}}X_{i}^{2}\sum_{i\in R_{2}}Y_{i}^{2}}}-\frac{|R_1|^2}{{\sum_{i\in R_{1}}X_{i}^{2}\sum_{i\in R_{1}}Y_{i}^{2}}}\right)\\
&&+\frac{|R_1|^2}{{\sum_{i\in R_{1}}X_{i}^{2}\sum_{i\in R_{1}}Y_{i}^{2}}}\left[{1\over |R_2|}\left(\sum_{i\in R_{2}}X_{i}Y_{i}\right)^2-{1\over |R_1|}\left(\sum_{i\in R_{1}}X_{i}Y_{i}\right)^2\right]
\end{eqnarray*}
We now bound the terms on the right hand side separately.

Observe that
\begin{eqnarray*}
&&\left|\left({1\over \sqrt{|R_2|}}\sum_{i\in R_{2}}X_{i}Y_{i}\right)^2\left(\frac{|R_1|^2}{{\sum_{i\in R_{1}}X_{i}^{2}\sum_{i\in R_{1}}Y_{i}^{2}}}-\frac{|R_2|^2}{{\sum_{i\in R_{2}}X_{i}^{2}\sum_{i\in R_{2}}Y_{i}^{2}}}\right)\right|\\
&=&|R_2|r_{R_2}^2\left|1-\left({|R_1|\over |R_2|}{\sum_{i\in R_{2}}X_{i}^{2}\over \sum_{i\in R_{1}}X_{i}^{2}}\right)^{-1}\left({|R_1|\over |R_2|}{\sum_{i\in R_{2}}Y_{i}^{2}\over \sum_{i\in R_{1}}Y_{i}^{2}}\right)^{-1}\right|.
\end{eqnarray*}
By Lemma \ref{lem:ftail},
$$
\PP\left\{\left|{|R_1|\over |R_2|}{\sum_{i\in R_{2}}X_{i}^{2}\over \sum_{i\in R_{1}}X_{i}^{2}}-1\right|\ge x\right\}\le 2\exp\left(-{|R_2|\over 12} x^2\right),
$$
and
$$
\PP\left\{\left|{|R_1|\over |R_2|}{\sum_{i\in R_{2}}Y_{i}^{2}\over \sum_{i\in R_{1}}Y_{i}^{2}}-1\right|\ge x\right\}\le 2\exp\left(-{|R_2|\over 12} x^2\right).
$$
We get, for any $x<1/2$,
$$
\left|1-\left({|R_1|\over |R_2|}{\sum_{i\in R_{2}}X_{i}^{2}\over \sum_{i\in R_{1}}X_{i}^{2}}\right)^{-1}\left({|R_1|\over |R_2|}{\sum_{i\in R_{2}}Y_{i}^{2}\over \sum_{i\in R_{1}}Y_{i}^{2}}\right)^{-1}\right|\le 4x
$$
with probability at least $1-4\exp\left(-|R_2|x^2/12\right)$. On the other hand, by Lemma \ref{lem:r},
$$
\PP\{|r_{R_2}|\ge x\}\le 6\exp(-|R_2|x^2/64).
$$
Thus, by taking $u=4|R_2|x^3$,
$$
|R_2|r_{R_2}^2\left|1-\left({|R_1|\over |R_2|}{\sum_{i\in R_{2}}X_{i}^{2}\over \sum_{i\in R_{1}}X_{i}^{2}}\right)^{-1}\left({|R_1|\over |R_2|}{\sum_{i\in R_{2}}Y_{i}^{2}\over \sum_{i\in R_{1}}Y_{i}^{2}}\right)^{-1}\right|\le u
$$
with probability at least
$$
1-10\exp\left(-{|R_2|^{1/3}u^{2/3}\over 64\cdot 4^{2/3}}\right).
$$
Denote by $\Ecal_1(u)$ the event that the above inequality holds.

To bound the second term, first note that
\begin{eqnarray*}
&&{1\over |R_2|}\left(\sum_{i\in R_{2}}X_{i}Y_{i}\right)^2-{1\over |R_1|}\left(\sum_{i\in R_{1}}X_{i}Y_{i}\right)^2\\
&=&\left({1\over |R_2|}-{1\over |R_1|}\right)\left(\sum_{i\in R_{1}}X_{i}Y_{i}\right)^2-{1\over |R_2|}\left(\sum_{i\in R_{1}\setminus R_2}X_{i}Y_{i}\right)^2-{2\over |R_2|}\left(\sum_{i\in R_2}X_{i}Y_{i}\right)\left(\sum_{i\in R_{1}\setminus R_2}X_{i}Y_{i}\right).
\end{eqnarray*}
By Lemma \ref{lem:r},
$$
\frac{|R_1|^2}{{\sum_{i\in R_{1}}X_{i}^{2}\sum_{i\in R_{1}}Y_{i}^{2}}}\left({1\over |R_2|}-{1\over |R_1|}\right)\left(\sum_{i\in R_{1}}X_{i}Y_{i}\right)^2=\left({|R_1|\over |R_2|}-1\right)|R_1|r_{R_1}^2\le \left({|R_1|\over |R_2|}-1\right)|R_1|x^2,
$$
with probability at least $1-6\exp(-|R_1|x^2/64)$. On the other hand, again by Lemma \ref{lem:r},
$$
\PP\left\{\left|\sum_{i\in R_{1}\setminus R_2}X_{i}Y_{i}\right|\ge  2\sqrt{x(|R_1|-|R_2|)}+2x\right\}\le 4e^{-x},
$$
Recall that, by $\chi^2$ lower tail bounds from \cite{ChiSqu}, we get
$$
\PP\left\{\sum_{i\in R_1}X_{i}^2\le |R_1|-2\sqrt{x|R_1|}\right\}\le e^{-x},
$$
and
$$
\PP\left\{\sum_{i\in R_1}Y_{i}^2\le |R_1|-2\sqrt{x|R_1|}\right\}\le e^{-x}.
$$
Thus, for any $x<|R_1|/16$,
\begin{eqnarray*}
\frac{|R_1|^2/|R_2|}{{\sum_{i\in R_{1}}X_{i}^{2}\sum_{i\in R_{1}}Y_{i}^{2}}}\left(\sum_{i\in R_{1}\setminus R_2}X_{i}Y_{i}\right)^2&\le& {|R_1|^2/|R_2|\over \left(|R_1|-2\sqrt{x|R_1|}\right)^2}\left(2\sqrt{x(|R_1|-|R_2|)}+2x\right)^2\\
&\le& {16\over |R_2|}\left(\sqrt{x(|R_1|-|R_2|)}+x\right)^2,
\end{eqnarray*}
with probability at least $1-6e^{-x}$. In other words,
\begin{eqnarray*}
\frac{|R_1|^2/|R_2|}{{\sum_{i\in R_{1}}X_{i}^{2}\sum_{i\in R_{1}}Y_{i}^{2}}}\left(\sum_{i\in R_{1}\setminus R_2}X_{i}Y_{i}\right)^2&\le& {1\over |R_2|}\left({1\over 2}x\sqrt{|R_1|(|R_1|-|R_2|)}+{|R_1|x^2\over 16}\right)^2\\
&\le&{1\over 2}\left({|R_1|\over |R_2|}-1\right)|R_1|x^2+{|R_1|^2x^4\over 128|R_2|},
\end{eqnarray*}
with probability at least $1-6\exp(-|R_1|x^2/64)$ for any $x<2$. Following a similar argument, we can also show that
\begin{eqnarray*}
&&\frac{2|R_1|^2/|R_2|}{{\sum_{i\in R_{1}}X_{i}^{2}\sum_{i\in R_{1}}Y_{i}^{2}}}\left(\sum_{i\in R_2}X_{i}Y_{i}\right)\left(\sum_{i\in R_{1}\setminus R_2}X_{i}Y_{i}\right)\\
&\le& {1\over |R_2|}\left({1\over 2}x\sqrt{|R_1||R_2|}+{|R_1|x^2\over 16}\right)\left({1\over 2}x\sqrt{|R_1|(|R_1|-|R_2|)}+{|R_1|x^2\over 16}\right),
\end{eqnarray*}
with probability at least $1-10\exp(-|R_1|x^2/64)$ for any $x<2$. Note $2|R_2|\ge |R_1|$. In summary, we get
\begin{eqnarray*}
&&\frac{|R_1|^2}{{\sum_{i\in R_{1}}X_{i}^{2}\sum_{i\in R_{1}}Y_{i}^{2}}}\left|{1\over |R_2|}\left(\sum_{i\in R_{2}}X_{i}Y_{i}\right)^2-{1\over |R_1|}\left(\sum_{i\in R_{1}}X_{i}Y_{i}\right)^2\right|\\
&\le&2\left({|R_1|\over |R_2|}-1\right)^{1/2}|R_1|x^2+{|R_1|x^3\over 16}
\end{eqnarray*}
with probability at least $1-22\exp(-|R_1|x^2/64)$ for any $x<2$. Hence, with probability at least
$$
1-22\exp\left(-{1\over 256}\left({|R_1|\over |R_2|}-1\right)^{-1/2}u\right)-22\exp\left(-{1\over 16}|R_1|^{1/3}u^{2/3}\right)
$$
we have
$$
\frac{|R_1|^2}{{\sum_{i\in R_{1}}X_{i}^{2}\sum_{i\in R_{1}}Y_{i}^{2}}}\left|{1\over |R_2|}\left(\sum_{i\in R_{2}}X_{i}Y_{i}\right)^2-{1\over |R_1|}\left(\sum_{i\in R_{1}}X_{i}Y_{i}\right)^2\right|\le u.
$$
Denote this event by $\Ecal_2(u)$.

In summary, for any $u<|R_1|/256$,
$$
||R_1|r_{R_1}^2-|R_2|r_{R_2}^2|\le 2u
$$
with probability at least
\begin{eqnarray*}
\PP\left\{\Ecal_1(u)\bigcap\Ecal_2(u)\right\}&\ge& 1-22\exp\left(-{1\over 256}\left({|R_1|\over |R_2|}-1\right)^{-1/2}u\right)-22\exp\left(-{1\over 16}|R_1|^{1/3}u^{2/3}\right)\\
&&\qquad -10\exp\left(-{|R_2|^{1/3}u^{2/3}\over 64\cdot 4^{2/3}}\right)\\
&\ge&1-22\exp\left(-{1\over 256}\left({|R_1|\over |R_2|}-1\right)^{-1/2}u\right)-32\exp\left(-{1\over 128}|R_2|^{1/3}u^{2/3}\right).
\end{eqnarray*}
The statement, when $R_2\subseteq R_1$, then follows.

Now consider the general case when $R_2\nsubseteq R_1$. In this case,
$$
||R_1|r_{R_1}^2-|R_2|r_{R_2}^2|\le ||R_1|r_{R_1}^2-|R_1\cap R_2|r_{R_1\cap R_2}^2|+||R_2|r_{R_2}^2-|R_1\cap R_2|r_{R_1\cap R_2}^2|.
$$
We can now appeal to the bounds we derived for nested sets before to get
\begin{eqnarray*}
\PP(||R_1|r_{R_1}^2-|R_1\cap R_2|r_{R_1\cap R_2}^2|\ge x)\le 22\exp\left(-{1\over 256}\left({|R_1\cap R_2|\over |R_1|-|R_1\cap R_2|}\right)^{1/2}u\right)\\
+32\exp\left(-{1\over 128}|R_1\cap R_2|^{1/3}u^{2/3}\right),
\end{eqnarray*}
and
\begin{eqnarray*}
\PP(||R_2|r_{R_2}^2-|R_1\cap R_2|r_{R_1\cap R_2}^2|\ge x)\le 22\exp\left(-{1\over 256}\left({|R_1\cap R_2|\over |R_2|-|R_1\cap R_2|}\right)^{1/2}u\right)\\
+32\exp\left(-{1\over 128}|R_1\cap R_2|^{1/3}u^{2/3}\right).
\end{eqnarray*}
The first claim then follows from an application of the union bound.

To show the second statement, assume that $|R_1|\ge |R_2|$ without loss of generality. Observe that
$$
\rho={|R_1\cap R_2|\over \sqrt{|R_1||R_2|}}\le \sqrt{|R_2|\over |R_1|},
$$
which implies that $|R_2|\ge \rho^2|R_1|$. Therefore,
$$
|R_1\cap R_2|=\rho\sqrt{|R_1||R_2|}\ge \rho^2|R_1|,
$$
and
$$
|R_1\cap R_2|=\rho\sqrt{|R_1||R_2|}\ge \rho|R_2|.
$$
Thus,
$$
{|R_1\cap R_2|\over |R_1\cup R_2|-|R_1\cap R_2|}\ge {1\over \rho^{-2}+\rho^{-1}-2}={1\over 1-\rho}{\rho^2\over 2\rho+1}\ge {1\over 48}(1-\rho)^{-1},
$$
where the last inequality follows from the fact that $1/4\le \rho\le 1$.
\end{proof}
\vskip 25pt

We are now in position to derive bounds for the likelihood ratio statistic $L_R$. Since we work with centered random variables as stated earlier, it is natural to redefine $L_R$ as:
$$
L_R=-(|R|-1)\log(1-r_{R}^{2}).
$$
where $r_R$ is given by (\ref{eq:centerr}).

\begin{lemma}
\label{lem:LR}
Assume that $\{(X_i,Y_i): i\in R\}$ are iid copies of $(X,Y)\sim N(0, I_2)$ for some $|R|>1$. Then there exists numerical constants $0<c_1<c_2$ such that for any $x>1$,
$$
c_1|R|^{-1/2}e^{-x/2}\le\PP(L_R>x)\le c_2e^{-x/2}.
$$
\end{lemma}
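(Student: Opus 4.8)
The plan is to reduce the statement to the tail bound for the log-likelihood transform of a $t$ statistic already established in Lemma~\ref{lem:tconcen}. The crucial observation is that, for centered data under the null $N(0,I_2)$, the statistic $L_R$ admits an exact distributional representation in terms of a single $t$ random variable, after which the two-sided bound is a direct transcription of (\ref{eq:lrtail}).

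First I would establish the distributional identity. Set $n=|R|-1$ and define
$$
T_R=\frac{r_R\sqrt{|R|-1}}{\sqrt{1-r_R^2}},
$$
with $r_R$ given by the centered formula (\ref{eq:centerr}). Following the regression-through-the-origin argument used in the proof of Lemma~\ref{lem:LRH1}, now specialized to $\rho=0$, regressing $Y$ on $X$ without intercept yields one estimated parameter and hence $|R|-1$ residual degrees of freedom. Conditioned on $\{X_i: i\in R\}$, the standardized slope $\sum_{i\in R}X_iY_i/\sqrt{\sum_{i\in R}X_i^2}$ is standard normal and independent of the residual sum of squares $\sum_{i\in R}Y_i^2(1-r_R^2)\sim\chi^2_{|R|-1}$; their ratio is exactly $T_R$, so $T_R\sim t_{|R|-1}$ marginally since the conditional law does not depend on $\{X_i\}$.

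Next I would rewrite $L_R$ in the form covered by (\ref{eq:lrtail}). From the definition of $T_R$ one has $T_R^2/n=r_R^2/(1-r_R^2)$, hence $1-r_R^2=(1+T_R^2/n)^{-1}$, and therefore
$$
L_R=-(|R|-1)\log(1-r_R^2)=n\log\left(1+\frac{T_R^2}{n}\right).
$$
This is precisely the quantity appearing in (\ref{eq:lrtail}) of Lemma~\ref{lem:tconcen} with degree of freedom $n=|R|-1$. Applying that lemma gives, for any $x>1$,
$$
c_1(|R|-1)^{-1/2}e^{-x/2}\le \PP(L_R>x)\le c_2e^{-x/2}.
$$

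Finally I would reconcile the $(|R|-1)^{-1/2}$ prefactor with the stated $|R|^{-1/2}$ in the lower bound: since $|R|>1$ we have $(|R|-1)^{-1/2}\ge |R|^{-1/2}$, so the claimed lower bound follows at once, while the upper bound already matches. The only real obstacle is the first step, namely cleanly justifying the $t_{|R|-1}$ representation and tracking the correct degrees of freedom under the centered (known-mean) convention (\ref{eq:centerr}); once that identity is in hand, both tail bounds are immediate consequences of Lemma~\ref{lem:tconcen}.
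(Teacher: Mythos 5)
Your proposal is correct and follows essentially the same route as the paper: rewrite $L_R=(|R|-1)\log\bigl(1+T_R^2/(|R|-1)\bigr)$ with $T_R=r_R\sqrt{(|R|-1)/(1-r_R^2)}\sim t_{|R|-1}$ and invoke the tail bound (\ref{eq:lrtail}) of Lemma \ref{lem:tconcen}. The only difference is cosmetic: the paper cites \cite{RhotDis} for the $t_{|R|-1}$ law, whereas you derive it via the regression-through-the-origin argument, which is a valid and self-contained substitute.
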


\begin{proof}[Proof of Lemma \ref{lem:LR}]
Observe that
$$
L_{R}=(|R|-1)\log{\left(1+\frac{T_{R}^{2}}{|R|-1}\right)}
$$
where
$$
T_R=r_{R}\sqrt{\frac{|R|-1}{1-r_{R}^{2}}}
$$
and
$$
r_{R}=\frac{\sum_{i\in R}X_{i}Y_{i}}{\sqrt{\sum_{i\in R}X_{i}^{2}\sum_{i\in R}Y_{i}^{2}}}
$$
It is well known that, under the null hypothesis,
$$
T_R\sim t_{|R|-1}.
$$
See, e.g., \cite{RhotDis}. By Lemma \ref{lem:tconcen},
$$
c_1|R|^{-1/2}e^{-x/2}\le \PP(L_R>x)\le c_2e^{-x/2},
$$
for any $x>1$.
\end{proof}
\vskip 25pt

The following lemma bounds the change in the likelihood ration statistic due to a perturbation of the index set.

\begin{lemma}
\label{lem:dif}
Assume that $\{(X_i,Y_i): i\in R_1\cup R_2\}$ are iid copies of $(X,Y)\sim N(0,I_2)$, and $2|R_1\cap R_2|\ge |R_1\cup R_2|$. Then there exist numerical constants $c_0,c_1,c_2>0$ such that for any $x<c_0|R_1|$,
$$
\PP(\left|L_{R_1}-L_{R_2}\right|\ge x)\le c_1\exp\left(-c_2\min\left\{\left({|R_1\cap R_2|\over |R_1\cup R_2|-|R_1\cap R_2|}\right)^{1/2}x,|R_1\cap R_2|^{1/3}x^{2/3}\right\}\right).
$$
In particular, if
$$
\zeta:={|R_1\cap R_2|\over \sqrt{|R_1||R_2|}}\ge {1\over 4},
$$
then there exists a numerical constant $c_3>0$ such that for any $x<c_0|R_1|$,
$$
\PP(\left|L_{R_1}-L_{R_2}\right|\ge x)\le c_1\exp\left(-c_3\min\left\{ (1-\zeta)^{-1/2}x, |R_1\cap R_2|^{1/3}x^{2/3}\right\}\right).
$$
\end{lemma}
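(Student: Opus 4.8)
The plan is to reduce the statement to Lemma \ref{lem:difr}, which already controls the closely related quantity $\bigl||R_1|r_{R_1}^2-|R_2|r_{R_2}^2\bigr|$, by showing that $L_R$ differs from $|R|r_R^2$ only by a negligible remainder. Writing $h(u)=-\log(1-u)$ and $g(u)=h(u)-u$, so that $L_R=(|R|-1)h(r_R^2)$, I would first record the algebraic identity
$$
L_{R_1}-L_{R_2}=\bigl(|R_1|r_{R_1}^2-|R_2|r_{R_2}^2\bigr)-\bigl(r_{R_1}^2-r_{R_2}^2\bigr)+\bigl[(|R_1|-1)g(r_{R_1}^2)-(|R_2|-1)g(r_{R_2}^2)\bigr].
$$
The first parenthesis is exactly the quantity bounded by Lemma \ref{lem:difr}, and the remaining two brackets constitute a remainder whose smallness must be established.

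To control the remainder I would use the elementary inequality $0\le g(u)\le u^2$, valid for $0\le u\le 1/2$ (since $g(u)=\sum_{k\ge 2}u^k/k\le \tfrac12 u^2/(1-u)$), together with the correlation tail bound of Lemma \ref{lem:r}. The latter yields, for $1\le u<|R_i|/16$, that $r_{R_i}^2\le 64u/|R_i|$ with probability at least $1-6e^{-u}$. On the intersection of these events for $R_1$ and $R_2$, and provided $u$ is small enough that $64u/|R_i|\le 1/2$, one obtains
$$
\bigl|L_{R_i}-|R_i|r_{R_i}^2\bigr|\le r_{R_i}^2+(|R_i|-1)g(r_{R_i}^2)\le \frac{64u}{|R_i|}+\frac{64^2u^2}{|R_i|}\le \frac{Cu^2}{|R|},
$$
where $|R|:=\min\{|R_1|,|R_2|\}$ and $C$ is absolute. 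Here the hypothesis $2|R_1\cap R_2|\ge|R_1\cup R_2|$ forces $|R_1\cap R_2|\ge\tfrac12|R_1|$ and hence $|R|\ge\tfrac12|R_1|$, so the standing constraint $x<c_0|R_1|$ is equivalent, after renaming $c_0$, to $x<c_0|R|$.

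Combining the two estimates, on the good event we have $\bigl|L_{R_1}-L_{R_2}\bigr|\le \bigl||R_1|r_{R_1}^2-|R_2|r_{R_2}^2\bigr|+Cu^2/|R|$. I would then set $u=\sqrt{|R|x/(4C)}$, so that the deterministic remainder is at most $x/4\le x/2$, and split
$$
\PP\bigl(|L_{R_1}-L_{R_2}|\ge x\bigr)\le \PP\Bigl(\bigl||R_1|r_{R_1}^2-|R_2|r_{R_2}^2\bigr|\ge x/2\Bigr)+12\exp\bigl(-c\sqrt{|R|x}\bigr),
$$
bounding the first term by Lemma \ref{lem:difr} applied with target $x/2$. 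The key point is that the extra tail $\exp(-c\sqrt{|R|x})$ is absorbed into the main bound: since $|R_1\cap R_2|\le|R|$ and $x<c_0|R|$ give $|R|^3\ge |R_1\cap R_2|^2x$, hence $\sqrt{|R|x}\ge |R_1\cap R_2|^{1/3}x^{2/3}\ge \min\{(\cdots)^{1/2}x,\,|R_1\cap R_2|^{1/3}x^{2/3}\}$ with $(\cdots)$ denoting $|R_1\cap R_2|/(|R_1\cup R_2|-|R_1\cap R_2|)$, the remainder rate never decays more slowly than the target. After adjusting the numerical constants the two tails merge, and the second ``in particular'' statement follows by inheriting the corresponding reduction to $(1-\zeta)^{-1/2}$ from Lemma \ref{lem:difr} under the identical argument.

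The main obstacle, and the only delicate step, is precisely this final domination: I must check that choosing $u$ large enough to make $e^{-u}$ negligible remains compatible with keeping the remainder below $x/2$, and that the resulting $\sqrt{|R|x}$ rate dominates the $|R_1\cap R_2|^{1/3}x^{2/3}$ branch of the target rate — which is exactly what $x<c_0|R|$ provides. Everything else is routine bookkeeping with absolute constants, and the small-$x$ regime in which the asserted bound is vacuous (right-hand side $\ge 1$) can be absorbed into the constant $c_1$.
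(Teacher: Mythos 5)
Your proposal is correct, and it leans on the same two workhorses as the paper's own proof --- Lemma \ref{lem:difr} for the main fluctuation $\bigl||R_1|r_{R_1}^2-|R_2|r_{R_2}^2\bigr|$ and Lemma \ref{lem:r} to guarantee $r_{R_i}^2=O(u/|R_i|)$ with probability $1-O(e^{-u})$ --- but the decomposition is genuinely different. The paper first reduces to the nested case $R_2\subseteq R_1$ and then exploits convexity of $-\log(1-t)$ to sandwich $L_{R_1}-L_{R_2}$ between two expressions, yielding the deterministic bound $|L_{R_1}-L_{R_2}|\le (|R_2|-|R_1|)\log(1-\max\{r_{R_1}^2,r_{R_2}^2\})+|R_2||r_{R_2}^2-r_{R_1}^2|/(1-\max\{r_{R_1}^2,r_{R_2}^2\})$, each piece of which is then controlled on the event $\max\{r_{R_1}^2,r_{R_2}^2\}<\alpha$. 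You instead linearize $-\log(1-t)=t+g(t)$ with $0\le g(t)\le t^2$ for $t\le 1/2$, treat $|R|r_R^2$ as the leading term, and push everything else into a remainder of size $O(u^2/|R|)$; this lets you apply Lemma \ref{lem:difr} directly to general (non-nested) pairs without the reduction step, at the cost of having to verify explicitly that the secondary tail $\exp(-c\sqrt{|R|x})$ is dominated by the target rate --- which your observation that $x<c_0|R|$ forces $\sqrt{|R|x}\gtrsim |R_1\cap R_2|^{1/3}x^{2/3}$ handles correctly, as does your remark that the regime $u<1$ is the regime where the asserted bound is vacuous and can be absorbed into $c_1$. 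Both routes are sound; yours is somewhat more streamlined structurally, while the paper's convexity sandwich keeps the two sources of discrepancy (the change in cardinality and the change in $r_R^2$) more cleanly separated.
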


\begin{proof}[Proof of Lemma \ref{lem:dif}] Similar to Lemma \ref{lem:difr}, it suffices to prove the first statement when $R_2\subseteq R_1$. By the convexity of $-\log(1-x)$, we can ensure
\[L_{R_{1}}=-|R_{1}|\log(1-r_{R_{1}}^{2})\ge -|R_{1}|\log(1-r_{R_{2}}^{2})+\frac{|R_{1}|(r_{R_{1}}^{2}-r_{R_{2}}^{2})}{1-r_{R_{2}}^{2}}\]
and
\[L_{R_{2}}=-|R_{2}|\log(1-r_{R_{2}}^{2})\ge -|R_{2}|\log(1-r_{R_{1}}^{2})+\frac{|R_{2}|(r_{R_{2}}^{2}-r_{R_{1}}^{2})}{1-r_{R_{1}}^{2}}\]
Therefore,
\begin{equation}
\label{eq:Ldiff}
|L_{R_{1}}-L_{R_{2}}|\le  (|R_2|-|R_1|)\log(1-\max\{r_{R_{1}}^{2},r_{R_{2}}^{2}\})+\frac{|R_{2}||r_{R_{2}}^{2}-r_{R_{1}}^{2}|}{1-\max\{r_{R_{1}}^{2},r_{R_{2}}^{2}\}}.
\end{equation}
We now bound the two terms on the right hand side separately.

Denote by $\Ecal(\alpha)$ the event that
$$\max\{r_{R_{1}}^{2},r_{R_{2}}^{2}\}< \alpha.$$
By Lemma \ref{lem:r},
$$
\PP\{\Ecal(\alpha)\}\ge 1-12\exp(-|R_2|\alpha/64).
$$

Note that, for any $0<x<\alpha$,
$$
-\log(1-x)\le {x\over 1-\alpha}.
$$
We can upper bound the first term on the right hand side of (\ref{eq:Ldiff}) by
$$
{1\over 1-\alpha}\left(|R_1|-|R_2|\right)\max\{r_{R_{1}}^{2},r_{R_{2}}^{2}\}.
$$
Therefore,
\begin{equation}
\label{eq:Lbd1}
\PP\{(|R_2|-|R_1|)\log(1-\max\{r_{R_{1}}^{2},r_{R_{2}}^{2}\})\ge u\}\le 12\exp\left(-{1\over 64}|R_2|\min\left\{\alpha, {(1-\alpha)u\over |R_1|-|R_2|}\right\}\right).
\end{equation}

The second term of (\ref{eq:Ldiff}) can be upper bounded by
$$
{1\over 1-\alpha}\left(||R_2|r_{R_2}^2-|R_1|r_{R_1}^2|+(|R_1|-|R_2|)r_{R_1}^2\right),
$$
under the event $\Ecal(\alpha)$. By Lemma \ref{lem:difr}, we get
$$
\PP\{||R_2|r_{R_2}^2-|R_1|r_{R_1}^2|\ge x\}\le c_1\exp\left(-c_2\min\left\{\left({|R_2|\over |R_1|-|R_2|}\right)^{1/2}x,|R_2|^{1/3}x^{2/3}\right\}\right).
$$
And by Lemma \ref{lem:r},
$$
\PP\{(|R_1|-|R_2|)r_{R_1}^2\ge x\}\le 6\exp\left(-{|R_1|x\over 64(|R_1|-|R_2|)}\right)
$$
Therefore,
\begin{eqnarray*}
\PP\left\{\frac{|R_{2}||r_{R_{2}}^{2}-r_{R_{1}}^{2}|}{1-\max\{r_{R_{1}}^{2},r_{R_{2}}^{2}\}}\ge u\right\}\le 1-12\exp(-|R_2|\alpha/64)-6\exp\left(-{(1-\alpha)|R_1|u\over 128(|R_1|-|R_2|)}\right)\\
-c_1\exp\left(-{1-\alpha\over 2}c_2\min\left\{\left({|R_2|\over |R_1|-|R_2|}\right)^{1/2}u,|R_2|^{1/3}u^{2/3}\right\}\right).
\end{eqnarray*}
Together with (\ref{eq:Lbd1}), this implies the desired statement for $R_2\subseteq R_1$.
\end{proof}
\vskip 25pt

A careful inspection of the derivation of Lemma \ref{lem:dif} suggests that it can be extended to a more general situation where $X$ and $Y$ are correlated for some indices.
\begin{lemma}
\label{lem:dif1}
Let $R_1\subset R_2$ be two index sets. Assume that $\{(X_i,Y_i): i\in R_1\}$ are independent observations so that $(X_i,Y_i)\sim N(0,I_2)$ for $i\in R_1$, and $X_i$, $Y_i$ are standard normal random variables with correlation coefficient $\rho$ for $i\notin R_1$. Then there exist numerical constants $c_0,c_1,c_2>0$ such that for any $x<c_0|R_1|$,
$$
\PP(\left|L_{R_1}-L_{R_2}\right|\ge x)\le c_1\exp\left(-c_2\min\left\{ (1-\zeta)^{-1/2}x, |R_1|^{1/3}x^{2/3}\right\}\right).
$$
provided that $\zeta:={|R_1|/|R_2|}\ge 1/4$.
\end{lemma}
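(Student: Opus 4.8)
The plan is to mimic the proof of Lemma~\ref{lem:dif} almost line for line, since $R_1\subset R_2$ means the two sets are already nested, and to localize all of the extra work in the single block $R_2\setminus R_1$ of correlated coordinates. First I would rerun the convexity step that produced~(\ref{eq:Ldiff}), now with $R_1$ playing the role of the smaller set, to obtain
$$
|L_{R_1}-L_{R_2}|\le (|R_2|-|R_1|)\,\bigl|\log(1-M)\bigr|+\frac{|R_1|\,|r_{R_1}^2-r_{R_2}^2|}{1-M},\qquad M:=\max\{r_{R_1}^2,r_{R_2}^2\}.
$$
As in Lemma~\ref{lem:dif}, it then suffices to (i) keep $M$ bounded away from $1$ and (ii) control the discrepancy $\bigl||R_1|r_{R_1}^2-|R_2|r_{R_2}^2\bigr|$, which drives the second term once $M$ is under control.

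Next I would upgrade the two probabilistic tools, Lemmas~\ref{lem:r} and~\ref{lem:difr}, to allow the correlated block. Both proofs use only (a) that $\sum_{R}X_i^2$ and $\sum_R Y_i^2$ are $\chi^2_{|R|}$ and (b) the rotation $\sum_R X_iY_i=\frac12\sum_R U_i^2-\frac12\sum_R V_i^2$ with $U_i=(X_i+Y_i)/\sqrt2$ and $V_i=(X_i-Y_i)/\sqrt2$ independent. Splitting every sum over $R_2$ into its $R_1$ part and its $R_2\setminus R_1$ part, the $R_1$ part is handled by the existing lemmas verbatim. On the correlated block, $U_i\sim N(0,1+\rho)$ and $V_i\sim N(0,1-\rho)$ are still independent, so $\sum_{R_2\setminus R_1}X_iY_i$ is a difference of two independent scaled $\chi^2$ variables with scales $1\pm\rho\in[0,2]$; applying the Laurent--Massart bounds of \cite{ChiSqu} to each piece gives sub-exponential control with constants uniform in $\rho$. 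Crucially, $\sum_{R_2}X_i^2$ and $\sum_{R_2}Y_i^2$ are exactly $\chi^2_{|R_2|}$ no matter what $\rho$ is, because the marginals remain standard normal, so the denominators and the $F$-type ratios are treated exactly as before and Lemma~\ref{lem:ftail} applies unchanged.

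With these extended bounds in hand I would recombine the terms precisely as in the nested case of Lemma~\ref{lem:difr}. The $F$-ratio fluctuations contribute the linear branch, where the ratio $|R_1|/(|R_2|-|R_1|)=\zeta/(1-\zeta)$ is replaced by a constant multiple of $(1-\zeta)^{-1}$ using $\zeta\ge1/4$, yielding the exponent $(1-\zeta)^{-1/2}x$; the squared-correlation fluctuations contribute the cube-root branch $|R_1|^{1/3}x^{2/3}$. Inserting this two-branch tail into the displayed convexity bound and choosing $c_0$ small enough to force $M\le 1/2$, say, then delivers the stated inequality.

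The step I expect to be genuinely delicate is the correlated cross term $\sum_{R_2\setminus R_1}X_iY_i$, which, unlike in the fully independent case, carries a nonzero mean of order $|R_2\setminus R_1|\rho=(1-\zeta)|R_2|\rho$. The heart of the argument is to show that this systematic contribution enters only through the $(1-\zeta)$-scaled part of the decomposition, so that it is absorbed alongside the genuine fluctuations and the final tail retains both its two-branch $\min$ form and its $\rho$-free constants rather than picking up an extra $\rho$-dependent penalty in the exponent. Verifying this absorption while simultaneously propagating it through both bounds (i) and (ii) is where I would concentrate the effort.
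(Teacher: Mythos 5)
The paper offers no actual proof of Lemma \ref{lem:dif1}: it only remarks that a ``careful inspection of the derivation of Lemma \ref{lem:dif}'' yields the extension. Your strategy --- rerun the convexity bound (\ref{eq:Ldiff}), upgrade Lemmas \ref{lem:r} and \ref{lem:difr} to the correlated block via the rotation $U_i=(X_i+Y_i)/\sqrt{2}$, $V_i=(X_i-Y_i)/\sqrt{2}$, and use the fact that $\sum X_i^2$ and $\sum Y_i^2$ remain exactly $\chi^2$ because the marginals are standard --- is therefore precisely the route the paper intends, and those technical observations are all correct.

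The problem is the step you flag at the end and then leave open: it is not a delicate detail that ``absorbs,'' it is the point at which the claimed inequality fails without further restrictions. On the correlated block the cross term has mean $\EE\sum_{i\in R_2\setminus R_1}X_iY_i=(1-\zeta)|R_2|\rho$, so $r_{R_2}$ concentrates around $(1-\zeta)\rho$ rather than around $0$, and hence
$$
L_{R_2}\approx -|R_2|\log\bigl(1-(1-\zeta)^2\rho^2\bigr)\asymp |R_2|(1-\zeta)^2\rho^2,
$$
while $L_{R_1}=O_p(1)$. For $\rho$ and $1-\zeta$ bounded away from zero this difference grows linearly in $|R_2|$ with probability tending to one, which is incompatible with a tail bound asserting $|L_{R_1}-L_{R_2}|=O_p\bigl((1-\zeta)^{1/2}\bigr)$ with $\rho$-free numerical constants. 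In other words, the drift is a first-order deterministic effect, not a fluctuation, and it cannot be folded into the two-branch $\min$ form with the stated constants. A correct write-up must make an explicit choice --- let the constants depend on $\rho$, or impose a smallness condition tying $(1-\zeta)$ to $|R_2|\rho^2$ (which is what the application in the proof of Theorem \ref{th:upper} implicitly relies on, since there $1-\zeta\le 1/(3k^2)$), or recenter the statistics before bounding their difference --- and then carry the drift term $(1-\zeta)|R_2|\rho$ explicitly through the estimate of $\bigl||R_1|r_{R_1}^2-|R_2|r_{R_2}^2\bigr|$ in the second term of (\ref{eq:Ldiff}). As written, your proposal postpones exactly the step on which the truth of the lemma hinges, so it cannot be accepted as a proof.
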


Finally we derive a perturbation bounds for a polygon which is useful for our discussion in Section 3.1.

\begin{lemma}
\label{lem:setdif}
Let $K_1$ and $K_2$ be two polygons with vertices $u_{1}, u_{2},\ldots, u_{k}$ and $v_{1}, v_{2},\ldots, v_{k}$ respectively. Denote by $e_{j}$ the length of the edge between $u_j$ and $u_{(j {\rm mod} k)+1}$. Then
$$
|K_{1}\cap K_{2}^{c}|\le r\sum_{j=1}^{k}(e_{j}+2r),
$$
where $r$ is maximum distance between $u_j$ and $v_j$.
\end{lemma}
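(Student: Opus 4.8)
The plan is to exhibit an explicit cover of $K_1\cap K_2^c=K_1\setminus K_2$ by $k$ thin ``transition'' regions, one for each pair of corresponding edges, and to bound the area of each such region by $r(e_j+2r)$. Summing the bounds then gives exactly $r\sum_{j=1}^k(e_j+2r)$.

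First I would set up a linear homotopy between the two boundaries. Labelling the vertices so that $u_j$ corresponds to $v_j$, for $s\in[0,1]$ put $w_j(s)=(1-s)u_j+sv_j$ and let $\gamma_s$ be the closed polygon with vertices $w_1(s),\ldots,w_k(s)$, so that $\gamma_0=K_1$ and $\gamma_1=K_2$. For the $j$th edge I define the swept region
$$
Q_j=\Phi_j([0,1]^2),\qquad \Phi_j(s,t)=(1-t)\,w_j(s)+t\,w_{j+1}(s),
$$
with indices taken mod $k$; geometrically $Q_j$ is the region traced out by the $j$th edge as the boundary is deformed from $K_1$ to $K_2$.

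The key containment to establish is $K_1\setminus K_2\subseteq\bigcup_{j=1}^k Q_j$. The idea is that a point $x$ lying in $K_1$ but not in $K_2$ must be crossed by the moving boundary at some parameter value: the sets $\{s:x\in\mathrm{int}\,\gamma_s\}$ and $\{s:x\notin\gamma_s\}$ are open (the vertices, hence $\partial\gamma_s$, move continuously in $s$), they are disjoint, and they contain $0$ and $1$ respectively, so by connectedness of $[0,1]$ there is an $s$ with $x\in\partial\gamma_s\subseteq\bigcup_j Q_j$. I expect this to be the main obstacle, since for intermediate $s$ the polygon $\gamma_s$ need not be simple, so ``interior'' must be interpreted carefully. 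For the convex polygons relevant to Proposition \ref{pr:poly2} the curves $\gamma_s$ stay simple and the dichotomy is immediate; in full generality I would replace the interior/exterior argument by the parity of the crossing number of a generic ray emanating from $x$, which changes only when $x$ meets $\partial\gamma_s$.

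Finally I would bound each $|Q_j|$. Since $Q_j$ is the image of the unit square under $\Phi_j$, the area formula gives $|Q_j|\le\int_{[0,1]^2}|J_{\Phi_j}|\,ds\,dt$, where $J_{\Phi_j}=\partial_s\Phi_j\times\partial_t\Phi_j$. Writing $\partial_s\Phi_j=(1-t)(v_j-u_j)+t(v_{j+1}-u_{j+1})$ and $\partial_t\Phi_j=(1-s)(u_{j+1}-u_j)+s(v_{j+1}-v_j)$ as convex combinations, and using $|v_i-u_i|\le r$ together with $|v_{j+1}-v_j|\le|u_{j+1}-u_j|+2r=e_j+2r$, I obtain $|\partial_s\Phi_j|\le r$ and $|\partial_t\Phi_j|\le e_j+2r$, whence $|J_{\Phi_j}|\le r(e_j+2r)$ pointwise and therefore $|Q_j|\le r(e_j+2r)$. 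Combining this with the containment yields $|K_1\cap K_2^c|\le\sum_{j=1}^k|Q_j|\le r\sum_{j=1}^k(e_j+2r)$, as claimed.
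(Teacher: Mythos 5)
Your proof is correct, but it takes a genuinely different route from the paper. The paper interpolates \emph{discretely}: it forms hybrid polygons $Q_0=K_1,Q_1,\ldots,Q_k=K_2$ by replacing one vertex at a time, uses the purely set-theoretic containment $Q_0\cap Q_k^c\subseteq\bigcup_{i}(Q_i\cap Q_{i+1}^c)$, and bounds each single-vertex step by elementary geometry (the step changes only the two incident edges, so the difference sits inside two triangles of area at most $\tfrac12 r(e+2r)$ each, giving $\tfrac12 r(e_i+e_{i+1}+4r)$ per step). You instead interpolate \emph{continuously}, sweeping each edge under the linear homotopy and invoking the area formula $|\Phi_j([0,1]^2)|\le\int|J_{\Phi_j}|$ together with the bounds $|\partial_s\Phi_j|\le r$ and $|\partial_t\Phi_j|\le e_j+2r$; your Jacobian estimate is right, and it delivers the per-edge bound $r(e_j+2r)$ directly rather than by redistributing the paper's per-vertex bounds. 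The trade-off is in the containment step: the paper's inclusion is a one-line tautology valid for arbitrary sets, whereas yours requires the topological crossing argument you flag (the moving boundary must sweep past any point that changes from inside to outside), which needs the parity-of-crossing-number formulation to handle possibly non-simple intermediate polygons $\gamma_s$ rigorously. Since that argument is standard (the mod-2 winding number is locally constant in $s$ away from $\partial\gamma_s$) and the polygons actually used in Proposition \ref{pr:poly2} are convex so that all $\gamma_s$ remain simple, your proof is complete for the intended application; what it buys is a cleaner, symmetric per-edge bound and an argument that generalizes readily to higher dimensions or curved boundaries, at the cost of a small amount of topology that the paper's discrete swap avoids entirely.
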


\begin{proof}
Denote by $Q_{i}$ the polygon whose first $i$ vertices are the same with $K_{2}$ and whose remaining vertices are the same with $K_{1}$. In particular, $Q_{0}=K_{1}$ and $Q_{k}=K_{2}$. It is not hard to see that the $j$th edge of $Q_{i}$ is no longer than $e_{j}+2r$. If we compare $Q_{0}$ and $Q_{1}$, then only the first vertex might be different, as illustrated in Figure \ref{fig:perturb}.

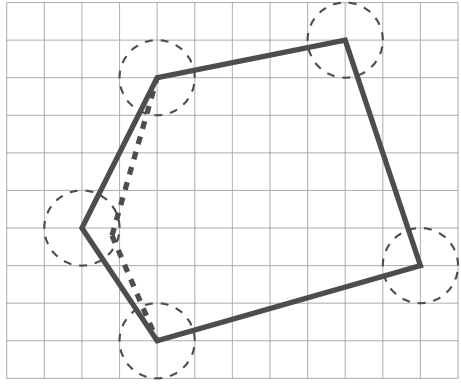
\begin{figure}[htbp]
\begin{center}
\begin{tikzpicture}
 \begin{scope}
       \fill[white,fill opacity=0.8] (0,-1) rectangle (6,4);
       \draw[step=0.5, black!30!white] (0,-1) grid (6,4);
       \path[draw,line width=2pt,black!70!white] (1,1) -- (2,3)--(4.5,3.5)--(5.5,0.5)--(2,-0.5)--cycle;
       \draw[dashed,thick,black!70!white] (1,1) circle (0.5);
       \path[draw,line width=2pt,black!70!white,dashed] (1.4,0.9) -- (2,3);
       \path[draw,line width=2pt,black!70!white,dashed] (1.4,0.9) -- (2,-0.5);
       \draw[dashed,thick,black!70!white] (4.5,3.5) circle (0.5);
       \draw[dashed,thick,black!70!white] (5.5,0.5) circle (0.5);
       \draw[dashed,thick,black!70!white] (2,3) circle (0.5);
       \draw[dashed,thick,black!70!white] (2,-0.5) circle (0.5);
 \end{scope}
\end{tikzpicture}
\end{center}
\caption{Effect of perturbation of vertices of a polygon.}
\label{fig:perturb}
\end{figure}

Because $Q_0$ and $Q_1$ are different only in the first vertex, they can only be different in the two edges linked with the first index. It can then be computed that
$$
|Q_{0}\cap Q_{1}^{c}|\le \frac{1}{2}r(e_{1}+e_{k}+4r)
$$
Similarly,
$$
|Q_{i}\cap Q_{i+1}^{c}|\le \frac{1}{2}r(e_{i}+e_{i+1}+4r),\qquad  i=1,2,\ldots,k-1,
$$
It is clear that
$$
K_{1}\cap K_{2}^{c}=Q_{0}\cap Q_{k}^{c}\subset \cup_{i=0}^{k-1}(Q_{i}\cap Q_{i+1}^{c})
$$
Therefore,
$$
|K_{1}\cap K_{2}^{c}|\le r\sum_{i=1}^{k}(e_{i}+2r),
$$
which completes the proof.
\end{proof}
\vskip 25pt

\begin{proof}[Proof of Proposition \ref{pr:poly1}]
Write
$$
\Ccal_{p_{1},\ldots,p_{k}}=\{K(\{(a_i,b_i): 1\le i\le k\}):2^{p_{i}}\le r_{i}< 2^{p_{i}+1},i=1,\ldots,k\}.
$$
It is clear that there exists a constant $C>0$ such that
$$
|\Ccal_{p_{1},\ldots,p_{k}}|\le C n2^{2\left(\sum_{i=1}^{k}p_{i}\right)}.
$$
Note that there are constants $c_1,c_2>0$ depending on $k$ and $M$ only such that
$$
\Rcal_{\rm polygon}(A;k,M)\subset \left\{K\in\Rcal_{\rm polygon}(k,M): c_1A^{1/2}\le r_i\le c_2A^{1/2}, i=1,2,\ldots,k\right\}.
$$
Therefore,
$$
\Rcal_{\rm polygon}(A;k,M)\le cnA^{k}
$$
which completes the proof because $A\asymp r_i^2$.
\end{proof}
\vskip 25pt

\begin{proof}[Proof of Proposition \ref{pr:poly2}]
Note that $\pi_s(K(\{(a_i,b_i): 1\le i\le k\}))$ is also a polygon. For brevity, we shall hereafter denote it by $K(\{(\tilde{a}_i,\tilde{b}_i): 1\le i\le k\})$. By Lemma \ref{lem:setdif}, we get
$$
|K(\{(a_i,b_i): 1\le i\le k\})\setminus K(\{(\tilde{a}_i,\tilde{b}_i): 1\le i\le k\})|\le C2^{s}\sum_{i}r_i\le   Ck2^sr_1.
$$
Hence
$$
\rho\left(K(\{(a_i,b_i): 1\le i\le k\}),K(\{(\tilde{a}_i,\tilde{b}_i): 1\le i\le k\})\right)\ge 1-\frac{Ck2^sr_1}{\pi r_1^2}\ge 1-\frac{C2^{s}}{r_1},
$$
which completes the proof.
\end{proof}

\end{document}